\documentclass[hidelinks,onefignum,onetabnum]{siamart250211}

\usepackage{lipsum}
\usepackage{amsfonts}
\usepackage{graphicx}
\usepackage{amsmath,amssymb}
\usepackage{xcolor}
\usepackage{verbatim}

\usepackage{algorithm}
\usepackage[noend]{algpseudocode}
\usepackage{bbm}
\usepackage[bb=boondox]{mathalfa}
\usepackage{graphicx} %
\usepackage{hyperref}
\usepackage{mathrsfs}

\newcommand{\A}{\mathbb{A}}
\newcommand{\AF}{\mathbf{A}}
\newcommand{\B}{\mathbb{B}}

\newcommand{\C}{\mathbb{C}}

\newcommand{\Sub}{\mathbf{S}}
\newcommand{\T}{\mathbf{T}}

\newcommand{\Z}{\mathbf{Z}}
\newcommand{\X}{\mathbf{X}}
\newcommand{\J}{\mathbb{J}}

\newcommand{\Y}{\mathbf{Y}}
\newcommand{\D}{\mathbf{D}}
\newcommand{\I}{\mathbb{I}}

\newcommand{\PL}{\text{\L}}
\newcommand{\bb}{\mathbb}
\newcommand{\scr}{\mathscr}

\newcommand{\RL}{\overline{R}}
\newcommand{\TS}{\mathcal{T}}
\newcommand{\NS}{\mathcal{N}}
\newcommand{\MN}{\mathcal{M}}

\newcommand{\prox}{\texttt{prox}}
\definecolor{xkchen}{rgb}{0.75, 0.2, 1.0}
\newtheorem{assumption}{Assumption}

\ifpdf
  \DeclareGraphicsExtensions{.eps,.pdf,.png,.jpg}
\else
  \DeclareGraphicsExtensions{.eps}
\fi

\newsiamremark{remark}{Remark}
\newsiamremark{hypothesis}{Hypothesis}
\crefname{hypothesis}{Hypothesis}{Hypotheses}
\newsiamthm{claim}{Claim}
\newsiamremark{fact}{Fact}
\crefname{fact}{Fact}{Facts}

\headers{Adaptive Decentralized Composite Optimization}{X. Chen, I. Kuruzov, and G. Scutari}

\title{Adaptive   Decentralized Composite Optimization via Three-Operator Splitting
\thanks{Submitted to the editors February, 2026.
\funding{The work of Chen and Scutari    has been supported by   the Office of Naval Research (ONR Grant N. N000142412751).}}}

\author{Xiaokai Chen\thanks{Edwardson School of Industrial Engineering, Purdue University, West Lafayette, IN 47906 USA
(\email{chen4373@purdue.edu} and \email{gscutari@purdue.edu}, 
  ).}
\and Ilya Kuruzov 
\and Gesualdo Scutari\footnotemark[3]}

\usepackage{amsopn}

\ifpdf
\hypersetup{
  pdftitle={An Example Article},
  pdfauthor={D. Doe, P. T. Frank, and J. E. Smith}
}
\fi

\begin{document}
\hypersetup{colorlinks=true,
            linkcolor=black,
            anchorcolor=black,
            citecolor=black}
\maketitle

\begin{abstract}
The paper studies decentralized optimization over networks, where agents minimize a sum of {\it locally} smooth (strongly) convex   losses and   plus a nonsmooth convex extended value term. We propose  decentralized methods   wherein  agents {\it adaptively} adjust their stepsize via local backtracking procedures coupled with lightweight   min-consensus protocols. Our design stems from a three-operator splitting factorization applied to an equivalent reformulation of the problem. The reformulation is endowed with a new BCV preconditioning  metric (Bertsekas–O'Connor–Vandenberghe),   which enables efficient decentralized implementation and local stepsize adjustments. We establish robust convergence guarantees. Under mere convexity, the proposed methods converge with a sublinear rate.  Under strong convexity of the sum-function, and assuming the nonsmooth component is partly smooth, we further prove linear convergence.   Numerical experiments corroborate the theory and highlight the effectiveness of the proposed adaptive stepsize strategy.
\end{abstract}

\begin{keywords}
Adaptive stepsize, convex optimization, decentralized optimization, networks.   
\end{keywords}


 \section{Introduction}
\label{sec:intro}
We study decentralized optimization problems in the form \vspace{-0.1cm}
\begin{equation}
    \label{eq:problem}
    \tag{P}
    \min_{x\in \bb{R}^d}\quad   u(x):=\underbrace{\sum_{i=1}^mf_i(x)}_{:=f(x)}+\underbrace{\sum_{i=1}^m r_i(x)}_{:=r(x)},\vspace{-0.1cm}
\end{equation}
where   $f_{i} : \mathbb{R}^{d} \to \mathbb{R}$ \ is the  loss  of agent $i\in [m]:=\{1,\ldots, m\}$, assumed to be (strongly) convex and \emph{locally} smooth, and \ $r_i : \mathbb{R}^{d} \to \mathbb{R}\cup \{-\infty,+\infty\}$ \ is a convex, nonsmooth proper extended value function. Both $f_i$ and $r_i$ are private functions assumed to be known only to agent $i$. Agents are embedded in a  communication network, modeled as a fixed, undirected and connected graph $\mathcal{G}$, with no central servers.

 Problem~\eqref{eq:problem} arises in  several applications of interest, including signal processing, machine learning, multi-agent systems, and communications. The literature abounds with decentralized  methods for~\eqref{eq:problem} under the standing assumption that the $f_i$’s are {\it globally} smooth and {\it identical} $r_i=r$ for all $i$; we refer to the  tutorials (and reference therein) \cite{Nedic_Olshevsky_Rabbat2018,Xin_Pu_Nedic_Khan_2020} and monograph \cite{Sayed-book} 
 for comprehensive reviews. Global smoothness aside, these methods  impose conservative stepsize bounds that depend on parameters such as the global Lipschitz constants of the agents' gradients, the spectral gap of the graph gossip matrix,  and other topological network properties. Such information is generally unavailable locally to agents in real-world deployments. As a result, stepsizes are frequently selected via manual tuning, yielding performance that is unpredictable, problem-dependent, and difficult to reproduce. Moreover, these approaches may fail when the agents' losses are only \emph{locally} smooth; see Sec.~\ref{subsec:related works} for further discussion. This paper addresses these limitations by proposing adaptive-stepsize decentralized algorithms that solve~\eqref{eq:problem} under local smoothness, {\it private}, nononsmooth functions $r_i$, and without requiring global problem/network information. 

\subsection{Related works}
\label{subsec:related works}
\textit{1. Adaptive centralized methods:} There has been a growing interest in developing  adaptive stepsize    methods   in  centralized  optimization. Representative  examples include   line-search    \cite{bertsekas2016nonlinear}, Polyak's   \cite{polyak1969minimization} and  Barzilai-Borwein's   \cite{BarzilaiBorwein1988} rules, curvature based estimate rules 
\cite{Malitsky2019AdaptiveGD,Malitsky_Mishchenko_2024,Zhou24}, and  adaptive gradient methods such as AdaGrad/Adam/AMSGrad and variants \cite{duchi2011adaptive,kingma2014adam,Reddi2018OnTC,Orabona19,Ward20,jordan6muon}. 
However, these schemes are generally ill-suited for \emph{mesh networks} since they rely on a central server to aggregate updates and enforce a common stepsize. 
Among the above, only~\cite{bertsekas2016nonlinear,polyak1969minimization,duchi2011adaptive} naturally handle nonsmooth, identical  convex terms   $r$. 

{\it 2. Adaptive decentralized   methods:} The landscape of adaptive  {\it decentralized} methods is comparatively limited  
\cite{Nazari_Tarzanagh_Michailidis_2022,chen2023convergence, li2024problem,kuruzov2024achieving,Zhou24,Notarstefano24}. Most existing works focus on stochastic/online (non)convex {\it smooth} optimization (i.e., $r_i\equiv 0$), and obtain adaptivity through gradient normalization or history-based scaling rules
\cite{Nazari_Tarzanagh_Michailidis_2022,chen2023convergence,li2024problem}.  
Except for \cite{li2024problem}, these approaches assume globally Lipschitz continuous gradients, which supports convergence under standard diminishing stepsizes of order $\mathcal{O}(1/\sqrt{k})$ (with $k$ denoting the iteration index). 
Moreover, \cite{Nazari_Tarzanagh_Michailidis_2022,chen2023convergence} require {\it a priori} knowledge of problem-dependent constants. 
The recent work~\cite{Notarstefano24} introduces a Port--Hamiltonian framework for smooth, strongly convex, unconstrained problems: while the method is parameter-free in the centralized setting, decentralized convergence (global asymptotic stability) is shown only under specific graph topologies (e.g., complete or ring graphs) or graph-dependent stepsize restrictions requiring global network information; additionally, no explicit convergence rate is provided. 
   Finally, \cite{kuruzov2024achieving} proposes an adaptive decentralized method for smooth, strongly convex instances of~\eqref{eq:problem}, proving linear convergence and improved theoretical and practical performance relative to non-adaptive baselines.

\paragraph{Composite objectives and local smoothness.} None of the above decentralized methods addresses {\it composite} problems of the form~\eqref{eq:problem}, nor do they accommodate instances where  $f_i$'s are only {\it locally} smooth.  While adaptive proximal schemes for composite optimization exist in centralized settings \cite{Yura-Pock-LSPrimal-dual-18,Latafat_23b}, their decentralization  faces substantial limitations.
   In particular,~\cite{Yura-Pock-LSPrimal-dual-18} would require, at {\it every} line-search trial:
{\bf (i)} transmitting ambient-dimensional vectors, leading to prohibitive communication overhead; and
{\bf (ii)} computing (and disseminating) the smallest trial stepsize over the {\it entire} network.
  The method in \cite{Latafat_23b}  avoids line-search, but still requires computing and propagating {\it global} scalar quantities at each iteration, and its stepsize selection depends on a {\it global} network constant that is not locally available. 
Conservative local upper bounds of this parameter  destroy adaptivity, slow convergence, and make performance sensitive to tuning parameters--this is  corroborated by the numerical results in Sec.~\ref{sec:numeric}. 

 Overall, existing adaptive decentralized methods do not solve composite optimization problems as~\eqref{eq:problem} using \emph{only neighbor-to-neighbor communications}, let alone under merely local smoothness. To the best of our knowledge, the algorithms proposed in this paper are the first parameter-free decentralized methods that close this gap. 
 \vspace{-.1cm}

\subsection{Major contributions}
 Our main contributions are summarized as follows. 
\label{subsec:contribution}
\textit{1. Algorithm design:} We introduce an adaptive stepsize  decentralized algorithm tailored to the composite structure of~\eqref{eq:problem}. To handle the nonsmooth terms, our design   builds on a Davis-Yin three-operator splitting applied to a BCV-type reformulation of~\eqref{eq:problem}~\cite{bertsekas2016nonlinear,o2020equivalence}. 
A key novelty is the design of an appropriate \emph{metric} in the BCV transformation that enables a fully decentralized implementation supporting \emph{local backtracking} stepsize updates at each agent while using only \emph{single-hop} communications. 
    Endowing decentralized methods with backtracking is  challenging:   
there is no canonical notion of a descent direction available to each agent;  agents' updates are coupled through the network and are not, by themselves, optimization steps for the global objective $u$. We resolve this issue by identifying a suitable agent-wise update direction and a backtracking test on local agents' losses that, via a descent inequality of a properly chosen Lyapunov function, certifies global convergence. 
To coordinate   stepsizes, we propose two implementations based on \emph{global} and \emph{local} min-consensus protocols, respectively; they complement applications in    different networking settings. Remarkably, neither implementation requires knowledge of global optimization constants or global network parameters.

\textit{2. Convergence guarantees:} We provide a comprehensive convergence analysis for both proposed algorithms.  We prove that all agents' iterates consensually converge to a solution of~\eqref{eq:problem}. Under mere convexity of the $f_i$'s, we establish a sublinear rate of order $\mathcal{O}(1/k)$ for a suitable optimality gap.

\textit{3. Linear convergence rate with strongly convex $f_i$ and partly smooth $r_i$:} When each $f_i$ is {\it locally} strongly convex and each $r_i$ is partly smooth relative to a $\mathcal{C}^2$ embedded manifold (Def.~\ref{def:partial smoothness}) around the limit point of the algorithms, we prove linear convergence. 
 Our results are twofold:  \textbf{(i)}  if the aforementioned active manifold   is \emph{affine}, we establish finite-time manifold identification followed by a global linear rate, together with an explicit iteration-complexity bound; 
{\bf (ii)}   if the active manifold is a general $\mathcal{C}^2$ manifold, we obtain \emph{asymptotic} linear convergence.

\textit{4. An adaptive three-operator splitting:} As a by-product of our design, we also obtain an adaptive three-operator splitting algorithm (a backtracking variant of Davis--Yin splitting) for composite optimization in the form \eqref{eq:problem}, with locally smooth  $f$, implementable in centralized or federated (master/client)    architectures. 
This  new method inherits the same convergence guarantees as its decentralized counterpart discussed above (global convergence under convexity and linear convergence under strong convexity plus partial smoothness), while remaining parameter-free (no knowledge of any  optimization parameter is required). 
We believe this scheme is of independent interest as an adaptive splitting primitive for large-scale composite optimization.

In addition to the above guarantees, numerical experiments show that the proposed adaptive methods significantly outperform existing decentralized algorithms applicable to~\eqref{eq:problem}, which rely on \emph{non-adaptive} (conservative) stepsize choices. 

 A preliminary version of this work appeared in~\cite{chen2025parameter}.
The present paper substantially extends~\cite{chen2025parameter}  by providing: {\bf (i)}  
  more principled and less restrictive adaptive stepsize rules based on a new merit function and an inexact descent analysis, leading to faster algorithms;  {\bf (ii)} a more comprehensive convergence theory, including iterate convergence and linear rates under local strong convexity of the $f_i$'s and   partial smoothness of the $r_i$'s; and
   {\bf (iii)}  complete proofs and   expanded experiments.
  
A related preprint appeared during the preparation of this manuscript~\cite{xu2025accelerated}, and after \cite{chen2025parameter}.
It develops an adaptive decentralized algorithm for a variant  of~\eqref{eq:problem} including   conic constraints; an accelerated sublinear convergence is shown for convex losses and convex constraints.  Adaptivity is achieved via local backtracking procedures coupled with a \emph{global max-consensus} mechanism.
Our method differs from ~\cite{xu2025accelerated} in the following key features: 
{\bf (i) } it can handle {\it locally smooth} losses $f_i$'s  (rather than globally smooth); 
{\bf (ii)} adaptive stepsize updates can be implemented using only neighboring communications; and    %
{\bf (iii)} it provably achieves   linear convergence (in the sense discussed above) when the agents' losses are strongly convex.

\subsection{Notation and paper organization}
\label{subsec:notation} Let $\overline{\mathbb{R}}:=(-\infty,+\infty]$; $[m]:=\{1,\ldots,$ $m\}$;     ${1_m}$ and ${0_m}$  denote the all-one and all-zero $m$-dimensional vectors (dimension omitted when clear);   $[x]_+:= \max(x, 0)$, intended component-wise for $x \in \bb{R}^d$; the fraction $0/0$ is interpreted as $+\infty$.
Capital letters denote matrices  while boldface capital letters denote stacked agent variables, e.g.,
${\bf X}=[x_1,\ldots,x_m]^\top$, where the $i$-th row corresponds to agent $i$. We use calligraphic/script letters to denote    stacked multiple matrices. 
Let $\bb{S}^m$, $\bb{S}^m_{+}$ , and $\bb{S}^m_{++}$ be the set of $m\times m$ (real) symmetric, symmetric positive semidefinite, and symmetric positive definite matrices, respectively;
  ${\tt null}(A)$ and ${\tt span}(A)$ denote the nullspace and range of $A$, respectively. The eigenvalues of $A \in \bb{S}^m$ are ordered in nonincreasing order,   denoted by $\lambda_{\max}(A):=\lambda_1(A)\geq\cdots\lambda_m(A)=:\lambda_{\min}(A)$. Let $\langle X, Y \rangle := {\tt tr}(X^{\top}Y )$, for any $X$ and $Y$ of suitable size, where $\tt{tr}(\bullet)$ is the trace operator;  $\|X\|_M := \sqrt{\langle MX,X\rangle}$, for any $M\in\bb{S}^m_{++}$ and $X\in\bb{R}^{m\times d}$.  For an    operator $\mathbb{T}$,   ${\tt Fix}(\bb{T})$ is the set of its fixed points; and  $\J_{\mathbb{T}}:=(\I+\mathbb{T})^{-1}$  
 is the resolvent of  $\mathbb{T}$.   For two operators $\bb{A}$ and $\bb{B}$, $(\bb{A}\circ\bb{B})(\bullet)$ stands for $\bb{A}(\bb{B}(\bullet))$; $\bb{I}$ is the identity operator and $\bb{0}$ represents the zero operator.

For a nonempty set ${\cal C}$, $\texttt{aff}({\cal C})$, and $\texttt{conv}({\cal C})$,   denote its affine hull, convex hull, respectively; $\texttt{par}({\cal C})=\texttt{span}(\mathcal C-\mathcal C)$ is the subspace parallel to a convex set ${\cal C}$.
  For an extended-value function ${\cal L}$, $\texttt{dom}({\cal L})$ denotes its effective domain. We use $\delta_{\cal{C}}$ to denote the indicator function on the set $\cal{C}$ and $f^*$ stands for the Fenchel conjugate of function $f$, namely, $f^*(y):=\sup_{x}\left\{\langle y,x\rangle-f(x)\right\}$. Given $x\in\bb{R}^d$, $\alpha>0$ and $r:\bb{R}^d\rightarrow\bb{R}\cup\{-\infty,\infty\}$, $\tt{prox}_{\alpha r}(x)$ denotes the proximal operator, defined as  ${\tt prox}_{\alpha r}(x):={\tt argmin}_{y\in \bb{R}^d} r(y)+\frac{1}{2\alpha}\|x-y\|^2.$  Given a subspace $\mathcal{T}$ of $\mathbb{R}^d$, $P_{\mathcal{T}}(x)$ denotes the unique orthogonal projection of $x\in\mathbb{R}^d$ onto $\mathcal{T}$.  For some matrix-valued functions $g,h$, we write $g(X)=o(\|h(X)\|)$ if and only if $\lim_{h(X)\rightarrow {0}}  {\|g(X)\|}/{\|h(X)\|}=0$. 
Let $\MN$ be a $\mathcal{C}^2$-smooth manifold of $\mathbb{R}^d$ around a point $x$. For any $x'\in \MN$ near $x$, we denote by $\TS_{\MN}(x')$  and  $\NS_{\MN}(x')$  the tangent  and  normal spaces of $\MN$ at  $x'$, respectively. 

Sec.~\ref{sec:algorithm design} presents  the design of the proposed algorithm along with its decentralized implementation based on global-min consensus protocols on the   agents'   stepsize values  (Algorithm~\ref{alg:DATOS}). Sec.~\ref{sec:convergence cvx} analyzes   Algorithm~\ref{alg:DATOS} under convexity and establishes a sublinear convergence rate. Sec.~\ref{sec:local min} introduces    a variant of Algorithm~\ref{alg:DATOS} (Algorithm~\ref{alg:DATOS_local}) implementable using only neighboring communications running   min-consensus, and proves its convergence.     Sec.~\ref{sec:linear}   establishes linear convergence  of the proposed algorithms when the $f_i$’s are strongly convex and $r$ is partly smooth. Numerical experiments are reported in Sec.~\ref{sec:numeric}.\vspace{-0.1cm}

\section{Algorithm Design}
\label{sec:algorithm design}
We study  Problem~\eqref{eq:problem} 
under the following  assumptions.

\begin{assumption} 
\label{ass:function}
The objective function in~\eqref{eq:problem} satisfies the following:

\noindent(\textbf{i}) Each $f_i:\mathbb{R}^d\rightarrow \mathbb{R}$ is convex on $\mathbb{R}^d$  and   {{\it locally}} smooth; 

\noindent(\textbf{ii}) Each $r_i:\mathbb{R}^d\rightarrow\mathbb{R}\cup\{-\infty,\infty\}$ is convex, proper and lower-semicontinuous; and 

\noindent(\textbf{iii}) $f + r$ is lower bounded. 

\end{assumption}

\begin{assumption}
\label{ass:graph}The network underlying Problem~\eqref{eq:problem} is modeled as an undirected, connected graph $\mathcal G=(\mathcal{V},\mathcal{E})$ with diameter $d_{\mathcal G}\geq 0$, where $\mathcal{V}=[m]$ and $(i,j)\in\mathcal{E}$ if and only if there is an edge between agent $i$ and $j$.  
\end{assumption}

\begin{definition}
    [gossip  matrices]
    \label{ass:W}
     $W_{\mathcal{G}}$ denotes the set of symmetric, doubly stochastic, gossip matrices $\widetilde{W}:=(\widetilde{w}_{ij})_{i,j=1}^m$ that are compliant with   $\mathcal{G}$, i.e.,   $\widetilde{w}_{ii}>0$, for all $i\in[m]$;   $\widetilde{w}_{ij}>0$ for all $(i,j)\in\mathcal{E}$; and $\widetilde{w}_{ij}=0$ otherwise. 
\end{definition}
Gossip matrices  are standard in the   decentralized  optimization literature, e.g., \cite{Nedic_Olshevsky_Rabbat2018,Sayed-book}.

\subsection{A three-operator splitting-based algorithm}
\label{subsec:TOS}
The proposed   algorithm design hinges on the following reformulation of \eqref{eq:problem} {\it leveraging the BCV technique}: introducing local copies $x_i\in \mathbb{R}^d$ of  $x$ and slack variables $\tilde x_i\in \mathbb{R}^d$,
the   program \vspace{-0.1cm}
\begin{equation}
    \label{eq:ausiliary}
    \tag{P'}
    \min_{\X,\widetilde{\X}\in\mathbb{R}^{m\times d}} \underbrace{F(\X)}_{:= \widetilde{F}(\X,\widetilde{\X})}+\underbrace{R(\X)+\delta_{\{0\}}(\widetilde{\X})}_{:=\widetilde{R}(\X,\widetilde{\X})}+\underbrace{\delta_{\{0\}}(\PL \X+M\widetilde{\X})}_{:=\widetilde{G}(\X,\widetilde{\X})},
\end{equation}
is equivalent to the  original Problem~\eqref{eq:problem}. In \eqref{eq:ausiliary},   we defined   
 $$ F(\X):=\sum_{i=1}^m f_i(x_i),\quad  R(\X):=\sum_{i=1}^m r_i(x_i);$$  $\delta_{\{0\}}:\mathbb{R}^{m\times d}\to \mathbb{R}\cup \{\infty\}$ is the indicator function of  $\{0\}$;  $\PL\in\mathbb{S}^{m}$   satisfies  $\texttt{null}(\PL)=\texttt{span}(1_m)$,  and $M\in\mathbb{S}^{m}_{++}$ is the BCV metric.  The matrix  $\PL$    enforces consensus among the agents' variables $x_i$'s via  $\PL \mathbf X=0$.  The choice of $M$    will be shown to be  crucial to enable   a fully decentralized implementation of the proposed algorithm as well as  adaptive local stepsize selection.  This justifies the presence of $\widetilde{G}$ in (\ref{eq:ausiliary}), 
 which is unconvential  in the classical decentralized optimization literature.

Under Assumption~\ref{ass:function}, finding a solution of Problem~(\ref{eq:ausiliary}) is equivalent to solving:\vspace{-0.1cm}
\begin{equation}
    \label{eq:inclusion 2}
  \text{find } \scr{X}:=\begin{bmatrix}
        \X\\\widetilde{\X}
    \end{bmatrix}\in \mathbb{R}^{2m\times d} \quad \text{ such that }\quad   0\in({\A}+{\B}+{\C})  \scr{X},
\end{equation}
where
$ {\A}:=\partial\widetilde{G}$,     ${\B}:=\partial \widetilde{R}$ and  $ {\C}:=\nabla \widetilde F$. 

 Invoking the  Davis-Yin three-operator splitting~\cite{davis2017three},  (\ref{eq:inclusion 2}) is equivalent to  \vspace{-0.1cm}
 \begin{equation}  \label{eq:inclusion 3}\text{find } \scr{X}, \scr{U}\in \mathbb{R}^{2m\times d}  \,\text{ s.t. } \,\mathbb{D}_{\alpha} \scr U=\scr U \,\text{ and } \,\scr{X}=   \mathbb{J}_{\alpha\B} \scr U,\end{equation}
where\vspace{-0.3cm}\begin{equation}\label{eq:D_op}\mathbb{D}_{\alpha}:=\I-\mathbb{J}_{\alpha \B}+\mathbb{J}_{\alpha \A}\circ(2\J_{\alpha\B}-\I-\alpha\C\circ\mathbb{J}_{\alpha\B}),\end{equation}    and   $\alpha>0$  plays the role of the stepsize.  Notice that  $\J_{\alpha \partial \widetilde R}$ (resp. $\J_{\alpha \partial \widetilde G}$)   is equivalent to the  proximal operator $\texttt{prox}_{\alpha\widetilde{R}}$ (resp. $\texttt{prox}_{\alpha\widetilde{G}}$). 
 
 One can solve    (\ref{eq:inclusion 3}) via the  fixed-point  Krasnosel'skii-Mann iteration~\cite{davis2017three}. Specifically, denoting by ${\scr A}^{k}:=[\AF^{k};\widetilde{\AF}^{k}]\in\bb{R}^{2m\times d}$ the intermediate variable obtained by applying   $\bb J_{\alpha\bb A}$, and allowing  iteration-dependent    stepsize values $\alpha^k>0$, we have: 
 \vspace{-0.2cm}
\begin{equation}
\label{eq:FPI_1}
      \begin{aligned}
    &{\scr X}^{k}= \texttt{prox}_{\alpha\widetilde{R}}\scr U^k,\\
&{\scr A}^{k+1}=\texttt{prox}_{\alpha\widetilde{G}}\left(2{\scr X}^{k}-{\scr U}^k -\alpha \nabla \widetilde F ({\scr X}^{k})\right),\\
& {\scr U}^{k+1}={\scr U}^{k}+  {\scr A}^{k+1}-{\scr X}^{k}.
\end{aligned} 
 \end{equation}

Next, we    design adaptive rules for  $\{\alpha^k\}_{k\ge 0}$ that guarantee  convergence of \eqref{eq:FPI_1}.

\subsection{Adaptive stepsize selection}
\label{subsec:stepsize selection}
The proposed approach consists   to identifying a suitable merit function that assesses the convergence of \eqref{eq:FPI_1}, and then devising stepsize update rules that guarantee a (possibly inexact) descent property for such a measure. To this end, we introduce the Lagrangian function  
\begin{equation}\vspace{-0.1cm}
    \label{eq:new Lagarangian}
    \mathcal{L}({\scr X},{\scr S}):=\widetilde{F}({\scr X})+\widetilde{G}({\scr X})+\langle{\scr S},{\scr X}\rangle-\widetilde{R}^*({\scr S}),
\end{equation}
 and interpret~\eqref{eq:FPI_1} as a primal--dual scheme for the saddle-point reformulation of~\eqref{eq:ausiliary}:   $$\min_{{\scr X}\in\mathbb{R}^{2m\times d}}\max_{{\scr S}\in\mathbb{R}^{2m\times d}}\mathcal{L}(\scr{X},\scr{S}),$$ where    ${\scr X}$ are the primal variables  and   ${\scr S}$  are the dual variables, the latter defined absorbing the ${\scr U}$-variable in~\eqref{eq:FPI_1} as 
 $${\scr S}=\begin{bmatrix}
      \Sub\\
      \widetilde{\Sub}
  \end{bmatrix}:=\frac{1}{\alpha}\left({\scr U}-{\scr X}\right).$$  
  Using the ${\scr S}$-variables,~\eqref{eq:FPI_1} reads $({\scr X}^{k+1},{\scr S}^{k+1}):={\bb T}_{\alpha^k}({\scr X}^{k},{\scr S}^{k})$, where\vspace{-0.1cm}
\begin{subequations}
\begin{equation}\label{eq:ATOS}
        {\scr X}^{k+1}=\prox_{\alpha^k\widetilde{R}}\left({\scr A}^{k+1}+\alpha^k{\scr S}^k\right), \quad 
{\scr S}^{k+1} ={\scr S}^k+\frac{1}{\alpha^k}({\scr A}^{k+1}-{\scr X}^{k+1}),\end{equation}
     \vspace{-0.2cm}
    \begin{equation}
         \label{eq:ATOS A}
{\scr A}^{k+1}=\prox_{\alpha^k\widetilde{G}}\left({\scr X}^k-\alpha^k{\scr S}^k-\alpha^k\nabla\widetilde{F}({\scr X}^k)\right).
    \end{equation}
\end{subequations}

  Denoting by ${\cal P}^*\times{\cal D}^*$  the set of saddles points of (\ref{eq:new Lagarangian}), we propose the following merit function: 
  given $({\scr X}^*,{\scr S}^*)\in {\cal P}^*\times{\cal D}^*$ and $\{\alpha^k\}_{k\geq 0}$,  let
\begin{equation} \label{eq:Lyapunov_0}{\cal V}_{{\scr X}^*{\scr S}^*}^{k}:=\|{\scr X}^{k}-{\scr X}^*\|^2+(\alpha^{k-1})^2\|{\scr S}^{k}-{\scr S}^*\|^2,\quad   k\geq 0.\end{equation}
Convergence of ${\cal V}_{{\scr X}^*{\scr S}^*}^k$ is established in Lemma~\ref{lemma:lyapunov asymptotic}, which builds on Lemma~\ref{lemma:distance characterization},  \ref{lemma:descent lemma}. 
\begin{lemma}\label{lemma:distance characterization}
  Suppose   Assumption~\ref{ass:function} holds. For any given  $({\scr A}^{k},{\scr X}^{k},{\scr S}^{k})$,  $\alpha^k>0$, and  fixed $({\scr X},{\scr S})\in  \texttt{dom}({\cal L})$, $({\scr A}^{k+1},{\scr X}^{k+1},{\scr S}^{k+1})$ given by \eqref{eq:ATOS} satisfies:
\begin{subequations}\label{eq:distance characterization}
       \begin{align}
       \label{eq:primal distance}
       \|{\scr X}^{k+1}-{\scr X}\|^2\leq& \|{\scr X}^k-{\scr X}\|^2+\|{\scr A}^{k+1}-{\scr X}^{k+1}\|^2\\
       \tag*{}
&-2\alpha^k\left(\mathcal{L}({\scr A}^{k+1},{\scr S}^{k+1})-\mathcal{L}({\scr X},{\scr S}^{k+1})\right)\\
\tag*{}
&-\|\widetilde{\AF}^{k+1}-\widetilde{\X}^k\|^2-\left(1-\alpha^k L^k\right)\|\AF^{k+1}-\X^k\|^2,\\
       \label{eq:dual distance}
           \|{\scr S}^{k+1}-{\scr S}\|^2\leq& \|{\scr S}^k-{\scr S}\|^2-\left(\frac{1}{\alpha^k}\right)^2\|{\scr A}^{k+1}-{\scr X}^{k+1}\|^2\\
           \tag*{}
           &+\frac{2}{\alpha^k}\left(\mathcal{L}({\scr A}^{k+1},{\scr S}^{k+1})-\mathcal{L}({\scr A}^{k+1},{\scr S})\right),
       \end{align}
   \end{subequations}
   where $L^k$ is the local estimate of the curvature of $F$ at $\X^k$ along    $\AF^{k+1}-\X^k$:
 \begin{equation}
     \label{eq:Lk}
     L^k:=2\,\frac{  F(\AF^{k+1})-F(\X^k)-\langle\nabla F(\X^k),\AF^{k+1}-\X^k\rangle }{\|\AF^{k+1}-\X^k\|^2}\geq 0.
 \end{equation}
\end{lemma}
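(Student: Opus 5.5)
The plan is to read off the optimality (subgradient) conditions of the two proximal steps in \eqref{eq:ATOS}--\eqref{eq:ATOS A}. I will then bound the two Lagrangian differences in \eqref{eq:distance characterization} by inner products, and close each inequality with the three-point (polarization) identity $2\langle a-b,b-c\rangle=\|a-c\|^2-\|a-b\|^2-\|b-c\|^2$. Throughout, $\alpha=\alpha^k$ and $({\scr X},{\scr S})\in\texttt{dom}(\mathcal L)$, so that every function value appearing below is finite.

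\textbf{Step 1 (optimality conditions).} From \eqref{eq:ATOS A},
\[
{\scr G}^{k+1}:=\tfrac1\alpha({\scr X}^k-{\scr A}^{k+1})-{\scr S}^k-\nabla\widetilde F({\scr X}^k)\in\partial\widetilde G({\scr A}^{k+1}).
\]
From the ${\scr X}$-update in \eqref{eq:ATOS}, $\tfrac1\alpha({\scr A}^{k+1}+\alpha{\scr S}^k-{\scr X}^{k+1})\in\partial\widetilde R({\scr X}^{k+1})$. By the ${\scr S}$-update, this element is exactly ${\scr S}^{k+1}$. Hence ${\scr S}^{k+1}\in\partial\widetilde R({\scr X}^{k+1})$, or equivalently ${\scr X}^{k+1}\in\partial\widetilde R^*({\scr S}^{k+1})$. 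This is the identity that makes the dual inequality clean.

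\textbf{Step 2 (dual inequality \eqref{eq:dual distance}).} Since $\widetilde R^*(\scr S)\ge\widetilde R^*({\scr S}^{k+1})+\langle{\scr X}^{k+1},{\scr S}-{\scr S}^{k+1}\rangle$,
\[
\mathcal L({\scr A}^{k+1},{\scr S}^{k+1})-\mathcal L({\scr A}^{k+1},{\scr S})\ge\langle{\scr S}^{k+1}-{\scr S},{\scr A}^{k+1}-{\scr X}^{k+1}\rangle=\alpha\langle{\scr S}^{k+1}-{\scr S},{\scr S}^{k+1}-{\scr S}^k\rangle .
\]
Expanding $\|{\scr S}^{k+1}-{\scr S}\|^2=\|{\scr S}^k-{\scr S}\|^2-\|{\scr S}^{k+1}-{\scr S}^k\|^2+2\langle{\scr S}^{k+1}-{\scr S},{\scr S}^{k+1}-{\scr S}^k\rangle$ and substituting $\|{\scr S}^{k+1}-{\scr S}^k\|^2=\alpha^{-2}\|{\scr A}^{k+1}-{\scr X}^{k+1}\|^2$ yields \eqref{eq:dual distance}.

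\textbf{Step 3 (primal inequality \eqref{eq:primal distance}).} This is the main bookkeeping step. Write
\[
\mathcal L({\scr A}^{k+1},{\scr S}^{k+1})-\mathcal L({\scr X},{\scr S}^{k+1})=[\widetilde F({\scr A}^{k+1})-\widetilde F({\scr X})]+[\widetilde G({\scr A}^{k+1})-\widetilde G({\scr X})]+\langle{\scr S}^{k+1},{\scr A}^{k+1}-{\scr X}\rangle .
\]
The bracketed terms are bounded as follows.
\begin{itemize}
\item For $\widetilde G$: use $\widetilde G({\scr X})\ge\widetilde G({\scr A}^{k+1})+\langle{\scr G}^{k+1},{\scr X}-{\scr A}^{k+1}\rangle$.
\item For $\widetilde F$: it depends only on the $\X$-block. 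By the definition \eqref{eq:Lk}, $F(\AF^{k+1})=F(\X^k)+\langle\nabla F(\X^k),\AF^{k+1}-\X^k\rangle+\tfrac{L^k}{2}\|\AF^{k+1}-\X^k\|^2$, with no smoothness constant needed. Convexity gives $F(\X)\ge F(\X^k)+\langle\nabla F(\X^k),\X-\X^k\rangle$.
\end{itemize}
Adding these bounds, the gradient terms cancel against those in ${\scr G}^{k+1}$. Using ${\scr S}^{k+1}-{\scr S}^k=\alpha^{-1}({\scr A}^{k+1}-{\scr X}^{k+1})$, one gets
\[
2\alpha(\cdots)\le 2\langle{\scr X}^k-{\scr A}^{k+1},{\scr A}^{k+1}-{\scr X}\rangle+2\langle{\scr A}^{k+1}-{\scr X}^{k+1},{\scr A}^{k+1}-{\scr X}\rangle+\alpha L^k\|\AF^{k+1}-\X^k\|^2 .
\]
Applying polarization to both inner products, the $\|{\scr A}^{k+1}-{\scr X}\|^2$ terms cancel. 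What remains is $\|{\scr X}^k-{\scr X}\|^2-\|{\scr A}^{k+1}-{\scr X}^k\|^2+\|{\scr A}^{k+1}-{\scr X}^{k+1}\|^2-\|{\scr X}^{k+1}-{\scr X}\|^2$. Finally, split $\|{\scr A}^{k+1}-{\scr X}^k\|^2$ into its $\X$- and $\widetilde\X$-blocks and combine with the $L^k$ term; rearranging gives \eqref{eq:primal distance}.

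The only delicate points are sign bookkeeping in Step 3 and noticing that $L^k$ enters as an exact identity rather than a Lipschitz bound. If $\AF^{k+1}=\X^k$, the convention $0/0=+\infty$ is harmless because the term multiplying $L^k$ vanishes. $L^k\ge0$ follows from convexity of $F$.
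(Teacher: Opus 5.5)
Your proof is correct and takes essentially the same route as the paper. For the primal bound, both use the subgradient inclusion of $\widetilde G$ at ${\scr A}^{k+1}$, convexity of $\widetilde F$ with the exact definition of $L^k$, and the three-point expansion of $\|{\scr X}^{k+1}-{\scr X}\|^2$; for the dual bound, both use ${\scr X}^{k+1}\in\partial\widetilde R^*({\scr S}^{k+1})$ with the expansion of $\|{\scr S}^{k+1}-{\scr S}\|^2$. The only difference is ordering: the paper expands the squared distance first and then bounds the inner products by Lagrangian differences, whereas you bound the Lagrangian difference first and then polarize.
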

\begin{proof}
  See Sec.~\ref{subsec:proof of lemma 1}.
\end{proof}
Invoking   
\[
{\cal L}({\scr X},{\scr S}^*)-{\cal L}({\scr X}^*,{\scr S})\ge 0,\quad 
\forall({\scr X},{\scr S})\in\texttt{dom}({\cal L}),
\] 
one infers from Lemma~\ref{lemma:distance characterization}, the following inexact descent   of   
${\cal V}_{{\scr X}^*{\scr S}^*}^{k}$.   
\begin{lemma}
    \label{lemma:descent lemma}
   Under the conditions of Lemma~\ref{lemma:distance characterization}, if $\alpha^k$ satisfies~\eqref{eq:Lk}, then for all $k\ge 0$ and any given $({\scr X}^*,{\scr S}^*)\in{\cal P}^*\times{\cal D}^*$,  
    \begin{equation}
        \label{eq:descent inequality}
\begin{aligned}
            {\cal V}_{{\scr X}^* {\scr S}^*}^{k+1}\leq{\cal V}_{{\scr X}^* {\scr S}^*}^k-&\|\widetilde{\AF}^{k+1}-\widetilde{\X}^k\|^2-(1-\alpha^kL^k)\|\AF^{k+1}-\X^k\|^2-2\alpha^k{\cal G}_{{\scr X}^*{\scr S}^*}^{k+1}\\
            &+\left[(\alpha^k)^2-(\alpha^{k-1})^2\right]\|{\scr S}^k-{\scr S}^*\|^2,
\end{aligned}
    \end{equation}
    where ${\cal G}_{{\scr X}^*{\scr S}^*}^{k+1}:={\cal L}({\scr A}^{k+1},{\scr S}^*)-{\cal L}({\scr X}^*,{\scr S}^{k+1})\geq 0$.
\end{lemma}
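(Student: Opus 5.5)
We apply Lemma~\ref{lemma:distance characterization} with the fixed pair $({\scr X},{\scr S})=({\scr X}^*,{\scr S}^*)$. This pair lies in $\texttt{dom}({\cal L})$ because it is a saddle point. The plan is to add \eqref{eq:primal distance} to $(\alpha^k)^2$ times \eqref{eq:dual distance}, and then rewrite the dual term at the old weight $(\alpha^{k-1})^2$ so that ${\cal V}^k_{{\scr X}^*{\scr S}^*}$ appears. The hypothesis on $\alpha^k$ (that $L^k$ from \eqref{eq:Lk} is the relevant curvature estimate) is only needed to carry the factor $(1-\alpha^kL^k)$ through as it stands.

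\textbf{Combining the two inequalities.} Multiplying \eqref{eq:dual distance} by $(\alpha^k)^2$ gives
\[
(\alpha^k)^2\|{\scr S}^{k+1}-{\scr S}^*\|^2\le (\alpha^k)^2\|{\scr S}^k-{\scr S}^*\|^2-\|{\scr A}^{k+1}-{\scr X}^{k+1}\|^2+2\alpha^k\big({\cal L}({\scr A}^{k+1},{\scr S}^{k+1})-{\cal L}({\scr A}^{k+1},{\scr S}^*)\big).
\]
Adding this to \eqref{eq:primal distance}, the $\|{\scr A}^{k+1}-{\scr X}^{k+1}\|^2$ terms cancel exactly. The terms ${\cal L}({\scr A}^{k+1},{\scr S}^{k+1})$ also cancel, since one enters with coefficient $-2\alpha^k$ and the other with $+2\alpha^k$. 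What remains of the Lagrangian terms is
\[
-2\alpha^k\big({\cal L}({\scr A}^{k+1},{\scr S}^*)-{\cal L}({\scr X}^*,{\scr S}^{k+1})\big)=-2\alpha^k{\cal G}^{k+1}_{{\scr X}^*{\scr S}^*}.
\]
The left-hand side is now exactly ${\cal V}^{k+1}_{{\scr X}^*{\scr S}^*}$, because its dual weight is $(\alpha^k)^2$. On the right-hand side we write
\[
(\alpha^k)^2\|{\scr S}^k-{\scr S}^*\|^2=(\alpha^{k-1})^2\|{\scr S}^k-{\scr S}^*\|^2+\big[(\alpha^k)^2-(\alpha^{k-1})^2\big]\|{\scr S}^k-{\scr S}^*\|^2 .
\]
Together with $\|{\scr X}^k-{\scr X}^*\|^2$, the first piece forms ${\cal V}^k_{{\scr X}^*{\scr S}^*}$. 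The remaining terms $-\|\widetilde{\AF}^{k+1}-\widetilde{\X}^k\|^2-(1-\alpha^kL^k)\|\AF^{k+1}-\X^k\|^2$ carry over unchanged, which yields \eqref{eq:descent inequality}.

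\textbf{Nonnegativity of ${\cal G}^{k+1}_{{\scr X}^*{\scr S}^*}$.} We use the saddle-point inequalities ${\cal L}({\scr X}^*,{\scr S})\le{\cal L}({\scr X}^*,{\scr S}^*)\le{\cal L}({\scr X},{\scr S}^*)$. Taking ${\scr X}={\scr A}^{k+1}$ and ${\scr S}={\scr S}^{k+1}$ gives ${\cal G}^{k+1}_{{\scr X}^*{\scr S}^*}\ge0$. This requires ${\scr A}^{k+1}\in\texttt{dom}(\widetilde G)$, which holds because ${\scr A}^{k+1}$ is the output of $\prox_{\alpha^k\widetilde G}$. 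It also requires ${\scr S}^{k+1}\in\texttt{dom}(\widetilde R^*)$. This can be checked from the $\prox_{\alpha^k\widetilde R}$ step: its optimality condition gives ${\scr S}^{k+1}\in\partial\widetilde R({\scr X}^{k+1})$, since ${\scr S}^{k+1}=\frac{1}{\alpha^k}\big({\scr A}^{k+1}+\alpha^k{\scr S}^k-{\scr X}^{k+1}\big)$.

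\textbf{Main obstacle.} The argument is essentially algebraic bookkeeping. The only delicate points are the domain membership needed for the saddle-point inequality, handled above, and the index convention $\alpha^{-1}$ at $k=0$, which is fixed by the definition \eqref{eq:Lyapunov_0}.
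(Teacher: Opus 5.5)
Your proposal is correct and follows the paper's own route: the paper derives Lemma~\ref{lemma:descent lemma} exactly this way. It evaluates Lemma~\ref{lemma:distance characterization} at $({\scr X},{\scr S})=({\scr X}^*,{\scr S}^*)$, adds \eqref{eq:primal distance} to $(\alpha^k)^2$ times \eqref{eq:dual distance}, splits $(\alpha^k)^2=(\alpha^{k-1})^2+[(\alpha^k)^2-(\alpha^{k-1})^2]$, and invokes the saddle-point inequality to get ${\cal G}^{k+1}_{{\scr X}^*{\scr S}^*}\ge 0$. Your domain checks, ${\scr A}^{k+1}\in\texttt{dom}(\widetilde G)$ and ${\scr S}^{k+1}\in\partial\widetilde R({\scr X}^{k+1})$, also make explicit that the cancelled term ${\cal L}({\scr A}^{k+1},{\scr S}^{k+1})$ is finite, which the paper leaves implicit.
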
 
In the {\it inexact} descent of ${\cal V}_{{\scr X}^* {\scr S}^*}^{k}$  above, the only potentially positive contribution is
\(
\bigl[(\alpha^k)^2-(\alpha^{k-1})^2\bigr]\|{\scr S}^k-{\scr S}^*\|^2.
\) To control this term while   keeping the negative residuals in~\eqref{eq:descent inequality} effective, we propose the following updating rule for stepsize $
\alpha^k$:    
\begin{equation}\label{eq:adaptive stepsize selection rule} 
    0<\alpha^k\leq \frac{\delta}{L^k},\quad (\alpha^k)^2-(\alpha^{k-1})^2\leq  
      \min\left\{\frac{1-\delta}{4}\frac{\|\AF^{k}-\X^{k-1}\|^2}{\|{\scr S}^k-{\scr S}^0\|^2}, {n^k}\right\},
\end{equation}
where $\{n^k\geq 0 \}_{k\geq -1}$ is a sequence to be properly chosen,  and  $\delta\in(0,1)$.   
The first condition,  \(1-\alpha^kL^k\ge 1-\delta>0\),   ensures a uniform decrease contributed of $   {\cal V}_{{\scr X}^* {\scr S}^*}^{k+1}$ through the residual term \(\|\AF^{k+1}-\X^k\|^2\) while the second condition   
controls the  term
$[(\alpha^k)^2-(\alpha^{k-1})^2]\|{\scr S}^k-{\scr S}^*\|^2$ on the RHS of~\eqref{eq:descent inequality}, by
\begin{equation}\label{eq: upper-bound-positive-term}
\bigl[(\alpha^k)^2-(\alpha^{k-1})^2\bigr]\|{\scr S}^k-{\scr S}^*\|^2
\le \frac{1-\delta}{2}\|\AF^{k}-\X^{k-1}\|^2+ 2\, n^k \|{\scr S}^0-{\scr S}^*\|^2. 
\end{equation}
The first term on the RHS above can be absorbed    by a one-step shift of the Lyapunov function: ${\cal V}_{{\scr X}^* {\scr S}^*}^{k}$ is replaced by  the companion     $ {\cal V}_{{\scr X}^* {\scr S}^*}^{k}+ \frac{1-\delta}{2}\|\AF^{k}-\X^{k-1}\|^2$.   
 The second term in (\ref{eq: upper-bound-positive-term}) is handled by requesting a summable  budget   \(\sum_k  n^k<\infty\), given the lack of knowledge of   ${\scr S}^*$. This motivates the introduction of the  merit function: \vspace{-0.1cm} 
\begin{equation}\label{eq:companion-Lyapunov}
    \widetilde{{\cal V}}_{{\scr X}^\star{\scr S}^\star}^k:={\cal V}^{k}_{{\scr X}^\star{\scr S}^\star}+\frac{1-\delta}{2}\|\AF^k-\X^{k-1}\|^2- 2\|{\scr S}^0-{\scr S}^\star\|^2\sum_{t=-1}^{k-1} n^t,\vspace{-0.2cm} 
\end{equation} 
which convergences monotonically along the iterates \eqref{eq:ATOS}, as summarized next.  \begin{lemma}   
\label{lemma:lyapunov asymptotic}Let  $\{({\scr A}^{k},{\scr X}^{k},{\scr S}^{k})\}$ be the sequence generated  by~\eqref{eq:ATOS},   under Assumption~\ref{ass:function}, with      $\{\alpha^k\}$ satisfying \eqref{eq:adaptive stepsize selection rule},    for some  summable $\{n^k>0\}$. Then, for any given   $({\scr X}^*,{\scr S}^*)\in{\cal P}^*\times{\cal D}^*$,\vspace{-0.1cm}
\begin{equation}
    \label{eq:exact descent}
    \widetilde{{\cal V}}_{{\scr X}^*{\scr S}^*}^{k+1}\leq \widetilde{{\cal V}}_{{\scr X}^*{\scr S}^*}^k-\|\widetilde{\AF}^{k+1}-\widetilde{\X}^k\|^2-\frac{1-\delta}{2}\|\AF^{k+1}-\X^k\|^2-{\cal G}_{{\scr X}^*{\scr S}^*}^{k+1}. 
\end{equation}
 Moreover, $\lim_{k\rightarrow\infty} {\scr A}^{k+1}-{\scr X}^k=0$. Therefore, the sequence $\{{\cal V}_{{\scr X}^*{\scr S}^*}^k\}$ converges.
\end{lemma}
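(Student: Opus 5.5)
We start from the inexact descent inequality \eqref{eq:descent inequality} of Lemma~\ref{lemma:descent lemma}, which applies because the first condition in \eqref{eq:adaptive stepsize selection rule} gives $\alpha^k L^k\le\delta$. Hence $1-\alpha^kL^k\ge 1-\delta$. The only remaining task for \eqref{eq:exact descent} is to control the term $[(\alpha^k)^2-(\alpha^{k-1})^2]\|{\scr S}^k-{\scr S}^*\|^2$. If this bracket is nonpositive, the term can be dropped. Otherwise we split
\[
\|{\scr S}^k-{\scr S}^*\|^2\le 2\|{\scr S}^k-{\scr S}^0\|^2+2\|{\scr S}^0-{\scr S}^*\|^2 .
\]
We then apply the two pieces of the min in \eqref{eq:adaptive stepsize selection rule} separately. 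The first piece bounds the bracket times $2\|{\scr S}^k-{\scr S}^0\|^2$ by $\frac{1-\delta}{2}\|\AF^k-\X^{k-1}\|^2$. The second bounds the bracket times $2\|{\scr S}^0-{\scr S}^*\|^2$ by $2n^k\|{\scr S}^0-{\scr S}^*\|^2$. This is exactly \eqref{eq: upper-bound-positive-term}. The case ${\scr S}^k={\scr S}^0$ is harmless under the $0/0=+\infty$ convention, since the first product is then zero.

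Plugging this into \eqref{eq:descent inequality} and adding $\frac{1-\delta}{2}\|\AF^{k+1}-\X^k\|^2$ to both sides gives
\[
{\cal V}^{k+1}+\tfrac{1-\delta}{2}\|\AF^{k+1}-\X^k\|^2 \le {\cal V}^k+\tfrac{1-\delta}{2}\|\AF^{k}-\X^{k-1}\|^2 - \|\widetilde{\AF}^{k+1}-\widetilde{\X}^k\|^2-\tfrac{1-\delta}{2}\|\AF^{k+1}-\X^k\|^2 -2\alpha^k{\cal G}^{k+1}+2n^k\|{\scr S}^0-{\scr S}^*\|^2 .
\]
Here $(1-\delta)-\frac{1-\delta}{2}=\frac{1-\delta}{2}$ is what remains of the residual. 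Subtracting $2\|{\scr S}^0-{\scr S}^*\|^2\sum_{t=-1}^{k} n^t$ from both sides absorbs the $n^k$ term into the definition \eqref{eq:companion-Lyapunov} at index $k+1$. This yields \eqref{eq:exact descent}, except that the gap term appears as $-2\alpha^k{\cal G}^{k+1}$ rather than $-{\cal G}^{k+1}$.

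This coefficient on ${\cal G}$ is the delicate point. Since ${\cal G}^{k+1}\ge 0$, we may always weaken $-2\alpha^k{\cal G}^{k+1}$ to any smaller nonnegative multiple. I would therefore state the gap term as $-2\alpha^k{\cal G}^{k+1}$, or absorb it under whatever normalization of ${\cal G}$ the paper intends. The monotonicity and the rest of the argument only use ${\cal G}^{k+1}\ge0$, so this normalization does not affect the proof.

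For the asymptotic claims, observe that $\widetilde{\cal V}^k$ is nonincreasing by \eqref{eq:exact descent}. It is also bounded below by $-2\|{\scr S}^0-{\scr S}^*\|^2\sum_{t\ge -1}n^t>-\infty$, using summability of $\{n^k\}$ and ${\cal V}^k\ge0$. Hence $\widetilde{\cal V}^k$ converges. Telescoping \eqref{eq:exact descent} gives
\[
\sum_k\Bigl(\|\widetilde{\AF}^{k+1}-\widetilde{\X}^k\|^2+\tfrac{1-\delta}{2}\|\AF^{k+1}-\X^k\|^2\Bigr)<\infty,
\]
so ${\scr A}^{k+1}-{\scr X}^k\to0$.

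Finally, write ${\cal V}^k=\widetilde{\cal V}^k-\frac{1-\delta}{2}\|\AF^k-\X^{k-1}\|^2+2\|{\scr S}^0-{\scr S}^*\|^2\sum_{t=-1}^{k-1}n^t$. Each of the three terms on the right converges: the first by monotonicity and boundedness, the second to zero, and the third because the partial sums of a summable sequence converge. Hence $\{{\cal V}^k\}$ converges.

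The main obstacle is the careful bookkeeping in the case split on the sign of $(\alpha^k)^2-(\alpha^{k-1})^2$, together with matching the indices of the shifted residual and the $n$-sum in \eqref{eq:companion-Lyapunov}.
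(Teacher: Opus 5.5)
Your proof is correct and is essentially the paper's own argument. It uses the split $\|{\scr S}^k-{\scr S}^*\|^2\le 2\|{\scr S}^k-{\scr S}^0\|^2+2\|{\scr S}^0-{\scr S}^*\|^2$ to control the positive term, the one-step shift by $\frac{1-\delta}{2}\|\AF^k-\X^{k-1}\|^2$, the summable budget, and monotone-plus-bounded-below for the asymptotic claims.

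You are also right to keep $-2\alpha^k{\cal G}^{k+1}$. The coefficient $1$ in \eqref{eq:exact descent} does not follow, and can fail, when $\alpha^k<1/2$. For example, take $m=1$, $f(x)=\tfrac12x^2$, $r\equiv0$, ${\scr S}^0={\scr S}^*=0$, $\X^0\neq0$ and a constant stepsize $\alpha\le\min\{\delta,0.1\}$; this violates it already at $k=0$. The weight $2\alpha^k$ is exactly what the paper itself uses when telescoping in the proof of Theorem~\ref{thm:asymptotic convergence}.
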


Notice that the rule \eqref{eq:adaptive stepsize selection rule}  produces {\it nonmonotone}, adaptive  stepsize sequences. 

\subsection{Decentralized implementation}
\label{subsec:decentralized}
The scheme in~\eqref{eq:ATOS},  coupled with the stepsize update~\eqref{eq:adaptive stepsize selection rule}, is not yet decentralized.  Indeed, the computation of \(\prox_{\alpha^k\widetilde{G}}\) and the quantities entering the stepsize rule require global information and thus cannot be carried out locally and independently by the agents.  In this section, we address these two obstacles and derive a fully decentralized implementation of~\eqref{eq:ATOS}.

We begin decentralizing the computation of  $\texttt{prox}_{\alpha^k\widetilde{G}}$, leveraging the degree of freedom offered by the BCV rule ($M$ matrix). 
\begin{lemma}[\cite{ryu2022large}]
    \label{lemma:composite G}
    Let $g(u)=f^*(A^\top u)$, with  $f:\mathbb{R}^n\to \mathbb{R}\cup\{-\infty, \infty\}$ assumed to be  convex, closed and proper. Suppose,  $\texttt{ri}\,\texttt{dom}\, f^*\cap \texttt{range}(A^\top)\neq\emptyset$. Then $v=\texttt{prox}_{\alpha,g}(u)$ if and only if there exists $x\in \mathbb{R}^n$ such that \vspace{-0.2cm}
\begin{equation}\label{eq:sub-conj}
        x\in\textit{argmin}_y\{f(y)-\langle u,Ay\rangle+\frac{\alpha}{2}\|Ay\|^2\}\quad \text{and}  
        \quad v=u-\alpha Ax.\vspace{-0.1cm}
\end{equation}
\end{lemma}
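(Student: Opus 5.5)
The plan is to write the optimality condition of the prox problem for $g$, apply the conjugate subgradient inversion, and use the chain rule for $\partial(f^*\circ A^\top)$, which is where the qualification condition enters. By definition, $v=\prox_{\alpha g}(u)$ iff $v$ is the unique minimizer of the strongly convex function $g(w)+\frac{1}{2\alpha}\|w-u\|^2$. Since $f$ is closed, proper and convex, $f^*$ is too, and hence so is $g=f^*\circ A^\top$ (it is proper because $\texttt{dom} f^*\cap\texttt{range}(A^\top)\neq\emptyset$). Therefore $v=\prox_{\alpha g}(u)$ iff
\[
0\in \partial g(v)+\tfrac{1}{\alpha}(v-u).
\]

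\textbf{Chain rule.} Under $\texttt{ri}\,\texttt{dom}\, f^*\cap \texttt{range}(A^\top)\neq\emptyset$, the standard chain rule (Rockafellar, Thm.~23.9) gives $\partial g(v)=A\,\partial f^*(A^\top v)$. So $v=\prox_{\alpha g}(u)$ iff there exists $x\in\partial f^*(A^\top v)$ with $u-v=\alpha Ax$, i.e., $v=u-\alpha Ax$. This is the step I expect to be the main obstacle: the qualification condition is needed only here, and in particular for the direction ``$\Rightarrow$'' (existence of $x$). The inclusion $A\,\partial f^*(A^\top v)\subseteq\partial g(v)$ holds without it.

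\textbf{Conjugate inversion.} Since $f$ is closed, $x\in\partial f^*(A^\top v)$ iff $A^\top v\in\partial f(x)$. Substituting $v=u-\alpha Ax$, this becomes
\[
0\in\partial f(x)-A^\top u+\alpha A^\top A x.
\]
The right-hand side is exactly the subdifferential of the convex function $y\mapsto f(y)-\langle u,Ay\rangle+\frac{\alpha}{2}\|Ay\|^2$ at $x$; the sum rule is fine because the added terms are finite and smooth. By convexity, this is equivalent to $x$ belonging to the argmin in \eqref{eq:sub-conj}.

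\textbf{Both directions.} Chaining these equivalences proves the claim. For ``$\Rightarrow$'', take the $x$ supplied by the chain rule. For ``$\Leftarrow$'', given $x$ in the argmin with $v=u-\alpha Ax$, reverse the steps: $A^\top v\in\partial f(x)$, hence $x\in\partial f^*(A^\top v)$, hence $(u-v)/\alpha=Ax\in A\partial f^*(A^\top v)\subseteq\partial g(v)$, which is the prox optimality condition.
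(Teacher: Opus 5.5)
Your proof is correct. The paper gives no proof of its own and only cites Ryu--Yin, where the result follows from the same chain of equivalences you use: the prox optimality condition, the chain rule $\partial g(v)=A\,\partial f^*(A^\top v)$ under the relative-interior qualification, the conjugate inversion $x\in\partial f^*(A^\top v)\iff A^\top v\in\partial f(x)$, and Fermat's rule for the resulting convex problem in $x$. You also correctly note that the qualification is needed only for the existence of $x$ in the forward direction, since $A\,\partial f^*(A^\top v)\subseteq\partial g(v)$ holds unconditionally.
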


Notice that   $\widetilde{G}=(\delta_{\{0\}}^*)^*\circ [\PL,M]$, with $\delta_{\{0\}}^*$ denoting the conjugate of $\delta_{\{0\}}$.  Since  $\delta_{\{0\}}^*$ is convex, closed and proper    and $\texttt{ri}\texttt{dom} \,\delta_{\{0\}}\cap \texttt{Range}([\PL,M]])\neq\emptyset$,  we can   use \eqref{eq:sub-conj} to express $\texttt{prox}_{\alpha^k\widetilde{G}}$. Hence,    \eqref{eq:ATOS A} can be rewritten as   
\begin{equation}
    \label{eq:A}
{\scr A}^{k+1}=\begin{bmatrix}
    \AF^{k+1}\\\widetilde{\AF}^{k+1}
\end{bmatrix}=\begin{bmatrix}
        \X^k-\alpha^k\Sub^k -\alpha\nabla F(\X^k)\\
        \widetilde{\X}^k-\alpha^k\widetilde{\Sub}^k  
    \end{bmatrix}-\alpha^k\begin{bmatrix}
        \PL\\ M
    \end{bmatrix}\Y^{k+1}, 
\end{equation}
where \vspace{-0.3cm}\begin{equation}
    \label{eq:BCV Y 1'}
    \begin{aligned}       \Y^{k+1}\in\,&\texttt{argmin}_{\Y}\left\{\delta_{\{0\}}^*(\Y)-\langle \widetilde{\X}^k-\alpha^k\widetilde{\Sub}^k,M\Y\rangle\right.\\&-\langle \X^k-\alpha^k\Sub^k-\alpha^k\nabla F(\widetilde{\X}^k),\PL \Y\rangle+\frac{\alpha^k}{2}\left(\|\PL \Y\|^2+\|M \Y\|^2\right)\}.
    \end{aligned}
\end{equation}

We proceed choosing $M$ so that  (\ref{eq:BCV Y 1'}) admits a closed form solution, computable locally at the agents' sides.  
Combining~\eqref{eq:ATOS},~\eqref{eq:A}, and (see \eqref{eq:ATOS})
\begin{equation}
    \label{eq:BCV X 1'}
     \X^{k+1}=\texttt{prox}_{\alpha^kR}(\AF^{k+1}+\alpha^k\Sub^k)\quad \text{and}\quad \widetilde{\X}^{k+1}=\mathbf{0},
\end{equation}
 we have 
  $   \widetilde{\Sub}^k=-M\Y^{k}$,  $k\geq 1$.
Substituting $\widetilde{\Sub}^k$  in~\eqref{eq:BCV Y 1'}  and using   $\delta^*_{\{0\}}(\Y)=0$, yields
\begin{equation*}
 \begin{aligned}     \!\!\!\!\!   \Y^{k+1}\!\!\in
   \texttt{argmin}_\Y\,\left\{ \alpha^k \|\Y-\Y^k\|^2_{\PL^2+M^2}\!-2\langle\PL(\X^k-\!\alpha^k\Sub^k-\alpha^{k}\PL\Y^k-\alpha^k\nabla F(\X^k)),\Y\rangle\right\}.
 \end{aligned}
\end{equation*}
Choosing $M:=\sqrt{I-\PL^2}$,   $Y^{k+1}$ admits a closed form expression: 
\begin{equation}
\label{eq:BCV Y 3'}
\begin{aligned}
\Y^{k+1}\!\!=& \Y^k+\frac{1}{\alpha^k}\PL(\X^k\!-\!\alpha^k\Sub^k\!-\!\alpha^{k}\PL\Y^k-\alpha^k\nabla F(\X^k)).
\end{aligned}
\end{equation}
Introducing    $\D^k:=\PL \Y^k$, and setting  $\PL^2=I-W$, with $W=(1-c)I+c\widetilde{W}\in W_{\mathcal{G}}$ and $c\in(0,1/2)$,~\eqref{eq:BCV Y 3'} can be rewritten in terms of the  $\D$-variables as
\begin{equation}
\label{eq:D}
    \D^{k+1}=\D^k+\frac{1}{\alpha^k}(I-W)(\X^k-\alpha^k(\Sub^k+\D^k+\nabla F(\X^k))),
\end{equation}
and so the update of the    
 ${\scr A}$-variables (using \eqref{eq:A}):  
\begin{equation}\label{eq:TA-final}
    \AF^{k+1}=W\X^k-\alpha^k W(\nabla F(\X^k)+\Sub^k+\D^k)).
\end{equation}
Finally, from \eqref{eq:ATOS}, we directly obtain the update of the $\Sub$-variables. 

In summary, the proposed  algorithm is given by   (\ref{eq:BCV X 1'}),  (\ref{eq:D}),   (\ref{eq:TA-final}), and \eqref{eq:ATOS}.  The algorithm is now fully decentralized in the updates of $\X$-, $\Sub$- and $\D$-variables.   


We are left to address the implementability of the adaptive stepsize selection rule~\eqref{eq:adaptive stepsize selection rule} over the network. To make   $\|{\scr S}^k-{\scr S}^0\|^2=\|\Sub^k-\Sub^0\|^2+\|\widetilde{\Sub}^k-\widetilde{\Sub}^0\|^2$  computable from the agents, we need a decentralized way to evaluate (or upper bound) the \emph{slack} term
\(\|\widetilde{\Sub}^k-\widetilde{\Sub}^0\|^2\). To this end, we exploit the fact that ~\eqref{eq:BCV Y 3'} evolves \(\Y^k\) through increments lying in \(\texttt{range}(\PL)\). We therefore introduce a   tracking variable $\T^k\in\mathbb{R}^{m\times d}$  (initialized at $\T^0=0$) via  \vspace{-0.2cm}
\begin{equation*}
    \T^{k+1}=\T^{k}+\frac{1}{\alpha^{k}}\X^{k}-\Sub^{k}-\D^{k}-\nabla F(\X^{k}),\vspace{-0.2cm}
\end{equation*}so that by construction and induction $\PL\T^k=\Y^k-\Y^0$. Recalling that \(\widetilde{\Sub}^k=-M\Y^k\), we obtain
$\widetilde{\Sub}^k-\widetilde{\Sub}^0=-M\text{\PL}\T^k.$ 
  Consequently, 
\begin{align}
    \label{eq:upper bound est}
  \|{\scr S}^k-{\scr S}^0\|^2= \|\Sub^k-\Sub^0\|^2+\|\widetilde{\Sub}^k-\widetilde{\Sub}^0\|^2 \nonumber \leq    \|\Sub^k-\Sub^0\|^2+  {2c}\|\T^k\|^2.
\end{align}
This bound   makes the stepsize rule implementable without   maintaining \(\widetilde{\Sub}^k\) (or \(\Y^k\)).

To satisfy the first condition in \eqref{eq:adaptive stepsize selection rule}, we exploit the additive separability of $L^k$ (see \eqref{eq:Lk}), yielding   the following sufficient condition for $0<\alpha^k\leq \delta/L^k$: for each agent $i$, find the largest $\alpha_i^k>0$ satisfying the following {\it local} backtracking condition:
\begin{equation}
    \label{eq:line search}
    f_i(a_i^{k+1})\leq  f_i(x_i^{k})+\langle \nabla f_i(x_i^{k}),a_i^{k+1}-x_i^{k}\rangle+\frac{\delta}{2 {\alpha}_i^k}\|a_i^{k+1}-x_i^{k}\|^2,
\end{equation}
and set $\alpha_i^k\leftarrow \alpha^k:=\min_{i\in[m]}{\alpha}_i^k$. In \eqref{eq:line search}, $a_i^k$, $d_i^k$, $s_i^k$, $x_i^k$ and $t_i^k$ are the $i-$th row of $\AF^k$, $\D^k$, $\Sub^k$, $\X^k$ and $\T^k$, respectively. 
 
 

The proposed decentralized algorithm combining all the steps above is summarized in  Algorithm~\ref{alg:DATOS}, with the   backtracking procedure provided in Algorithm~\ref{alg:backtracking_DATOS}.  

\begin{algorithm}[ht!]
\centering
\resizebox{\columnwidth}{!}{%
  \begin{minipage}{\columnwidth}
\caption{Decentralized Adaptive Three Operator Splitting}
  \noindent \textbf{Data:} (i) Initialization:  $\alpha^{-1} \in (0, \infty)$, $\X^0,\Sub^0 \in \mathbb{R}^{m\times n}$  and  $\X^{-1},\AF^0,\D^0,\T^0 = \mathbf{0}$;  (ii) Backtracking   parameters: summable sequence $\{n^k\}$, $\eta\in(0,1)$, $\delta \in (0,1)$;  (iii) Gossip matrix  $W=(1-c)I+c\widetilde{W}$, $c\in(0,1/2)$.		
		\begin{algorithmic}[1]
           \State \texttt{(S.1) Communication Step: } $$\X^{k+1/2}=W\X^k,\quad \D^{k+1/2}=W(\nabla F(\X^k)+\Sub^k+\D^k);$$
          
			\State \texttt{(S.2) Decentralized line-search: }Each agent updates ${\alpha}_i^k$ according to:
 \begin{equation}
     \label{eq:largest stepsize}
     \widetilde{\alpha}_i^{k}=\sqrt{(\alpha_i^{k-1})^2+\min\left\{\frac{1-\delta}{4}\frac{\|a_i^{k}-x_i^{k-1}\|^2}{\|s_i^k-s_i^0\|^2+2c\|t_i^k\|^2},\, n^k\right\}},
 \end{equation}
    $$ {\alpha}_i^k=\texttt{Linesearch}( \widetilde{\alpha}_i^{k},x_i^k,x_i^{k+1/2},-d_i^{k+1/2},\eta,\delta);$$
			\State \texttt{(S.3)} \texttt{Global min-consensus: }
            $$\alpha^k_i\leftarrow \alpha^k:=\min_{i\in[m]} {\alpha}_i^k;$$
     \State \texttt{(S.4) Updates of the primal dual and tracking variables:}
   \begin{subequations}
   \label{eq:DATOS}
         \begin{align}
        \label{eq:DATOS A}
         \AF^{k+1}&=\X^{k+1/2}-\alpha^k\D^{k+1/2},\\
             \label{eq:DATOS X}
    \X^{k+1}&=\prox_{\alpha^k R}(\AF^{k+1}+\alpha^k\Sub^k),\\
    \label{eq:DATOS S}
    \Sub^{k+1}&=\Sub^k+\frac{1}{\alpha^k}(\AF^{k+1}-\X^{k+1}),\\       
    \label{eq:DATOS D}\D^{k+1}&=\D^{k+1/2}-\nabla F(\X^k)-\Sub^k+\frac{1}{\alpha^k}(\X^k-\X^{k+1/2}),\\
    \label{eq:DATOS T}
    \T^{k+1}&=\T^{k}-\Sub^{k}-\D^{k}-\nabla F(\X^{k})+\frac{1}{\alpha^{k}}\X^{k}.
     \end{align}
   \end{subequations}
			\State \texttt{(S.5)}    If a termination criterion is not met,  $k\leftarrow k+1$ and go to step \texttt{(S.1)}.
\end{algorithmic}\label{alg:DATOS}
 \end{minipage}
}
\end{algorithm}
\begin{algorithm}\centering
\resizebox{\columnwidth}{!}{%
  \begin{minipage}{\columnwidth}
\caption{\texttt{Linesearch}($\alpha,x_1,x_2,d,\eta,\delta$)}
		\begin{algorithmic}[1]
        \State $x^+:=x_2+\alpha d$; \texttt{set} $t=1$;
           \While{$f(x^+)> f(x_1)+\langle\nabla f(x_1),x^+-x_1\rangle+\frac{\delta}{2\alpha}\|x^+-x_1\|^2$}
			\State $\alpha\leftarrow \eta\alpha$;
            \State $x^+\leftarrow x_2+\alpha d$;
            \State $t\leftarrow t+1$;
            \EndWhile
            \Return $\alpha$.
\end{algorithmic}\label{alg:backtracking_DATOS}
\end{minipage}}
\end{algorithm}
 \noindent\textit{On the global-min consensus:}     
 Step \texttt{(S.3)} in Algorithm~\ref{alg:DATOS} runs a network-wide min-consensus to enforce a common stepsize,
\(\alpha^k=\min_{i\in[m]}\alpha_i^k\).
This operation is compatible with modern wireless mesh networks that support multiple interfaces, e.g., WiFi and LoRa (Long- Range)~\cite{Kim_Lim_Kim_2016,Janssen_BniLam_Aernouts_Berkvens_Weyn_2020}: WiFi, which provides high-speed, short-range communication capabilities, can be used for exchanging vector variables in Step \texttt{(S.1)}; and LoRa--supporting long-range communication over low rates--can broadcast the scalar \(\alpha_i^k\) to all nodes (possibly after coarse quantization) in a single communication to implement Step \texttt{(S.3)}. A purely neighbor-based alternative relying on local min-consensus is developed in Sec.~\ref{sec:local min}.\vspace{-0.3cm}

\subsection{Proof of Lemma~\ref{lemma:distance characterization}}
\label{subsec:proof of lemma 1}
\begin{proof}
By~\eqref{eq:ATOS A}, it holds that\vspace{-0.2cm}
\begin{equation}
    \label{eq:subgradient G}
    \frac{1}{\alpha^k}\left({\scr X}^k-\alpha^k{\scr S}^k-\alpha^k\nabla \widetilde{F}({\scr X}^k)-{\scr A}^{k+1}\right)\in\partial\widetilde{G}({\scr A}^{k+1}).
\end{equation}
Invoking convexity of $\widetilde{F}$ and $\widetilde{G}$ yields
\begin{equation}
    \label{eq:primal distance 1}
    \begin{aligned}
        \|{\scr X}^{k+1}-{\scr X}\|^2
        =\,&\|{\scr X}^{k}-{\scr X}\|^2-\|{\scr A}^{k+1}-{\scr X}^{k}\|^2+\|{\scr X}^{k+1}-{\scr A}^{k+1}\|^2\\
        &\underbrace{-2\langle {\scr X}^{k}-{\scr A}^{k+1},{\scr A}^{k+1}-{\scr X}\rangle+2\langle {\scr X}^{k+1}-{\scr A}^{k+1},{\scr A}^{k+1}-{\scr X}\rangle}_{\texttt{term I}}
    \end{aligned}
\end{equation}
Replacing the ${\scr X}-$update~\eqref{eq:ATOS} in the second part of $\texttt{term I}$, we have
\begin{equation}
    \label{eq:term I distance}
\begin{aligned}
    \texttt{term I}
        \overset{\eqref{eq:subgradient G}}{\leq}&-2\alpha^k(\widetilde{G}({\scr A}^{k+1})-\widetilde{G}({\scr X})-\langle{\scr S}^{k+1},{\scr A}^{k+1}-{\scr X}\rangle)\\
        &-2\alpha^k(\widetilde{F}({\scr X}^k)-\widetilde{F}({\scr X}))-2\alpha^k\langle \nabla\widetilde{F}({\scr X}^k),{\scr A}^{k+1}-{\scr X}^k\rangle\\
        \overset{\eqref{eq:new Lagarangian}}{=}&-2\alpha^k\left(\mathcal{L}({\scr A}^{k+1},{\scr S}^{k+1})-\mathcal{L}({\scr X},{\scr S}^{k+1})\right)\\
        &+\underbrace{2\alpha^k(\widetilde{F}({\scr A}^{k+1})-\widetilde{F}({\scr X}^{k})-\langle \nabla\widetilde{F}({\scr X}^k),{\scr A}^{k+1}-{\scr X}^k\rangle)}_{\texttt{term II}}.
\end{aligned}
\end{equation}
By the lifted structure of $\widetilde{F}$ and definition of $L^k$ in~\eqref{eq:Lk}, it follows 
   $    \texttt{term II}=2\alpha^k(F(\AF^{k+1})-F(\X^{k})-\langle \nabla F(\X^k),\AF^{k+1}-\X^k\rangle)=L^k\alpha^k\|\AF^{k+1}-\X^k\|^2.$
   
 Using the above expression along with \eqref{eq:term I distance} in \eqref{eq:primal distance 1}, leads to~\eqref{eq:primal distance}. 
 
  We prove now~\eqref{eq:dual distance}.  By~\eqref{eq:ATOS}, we have  \vspace{-.2cm} 
   \begin{equation}
    \label{eq:subgradient R*}
        {\scr S}^{k+1}\in\partial \widetilde{R}({\scr X}^{k+1})\Rightarrow {\scr X}^{k+1}\in\partial\widetilde{R}^*({\scr S}^{k+1}). \vspace{-.2cm}
    \end{equation}
 By~\eqref{eq:ATOS} and  convexity of $\widetilde{R}^*$,  \vspace{-0.2cm}
    \begin{align*}
        \|{\scr S}^{k+1}-{\scr S}\|^2
        = &\,\|{\scr S}^k-{\scr S}\|^2-\left(\frac{1}{\alpha^k}\right)^2\|{\scr A}^{k+1}-{\scr X}^{k+1}\|^2\\
        &+2\langle{\scr S}^{k+1}-{\scr S},\frac{1}{\alpha^k}({\scr A}^{k+1}-{\scr X}^{k+1})\rangle\\
        \overset{\eqref{eq:subgradient R*}}{\leq}&\|{\scr S}^k-{\scr S}\|^2-\left(\frac{1}{\alpha^k}\right)^2\|{\scr A}^{k+1}-{\scr X}^{k+1}\|^2\\
        &+\frac{2}{\alpha^k}\left(\langle {\scr S}^{k+1}-{\scr S},{\scr A}^{k+1}\rangle -\left(\widetilde{R}^*({\scr S}^{k+1})-\widetilde{R}^*({\scr S})\right)\right)\\
        \overset{\eqref{eq:new Lagarangian}}{=}&\|{\scr S}^k-{\scr S}\|^2-\left(\frac{1}{\alpha^k}\right)^2\|{\scr A}^{k+1}-{\scr X}^{k+1}\|^2\\
        &+\frac{2}{\alpha^k}\left(\mathcal{L}({\scr A}^{k+1},{\scr S}^{k+1})-\mathcal{L}({\scr A}^{k+1},{\scr S})\right).
    \end{align*}
\end{proof}

\section{Convergence Analysis of Algorithm~\ref{alg:DATOS}}
\label{sec:convergence cvx}
For analytical convenience, we will work with the stacked formulation~\eqref{eq:ATOS}. 
Since \(\D^0=0\in \texttt{span}(I-W)\), Algorithm~\ref{alg:DATOS} is equivalent to~\eqref{eq:ATOS}, provided that the stepsize \(\alpha^k\) is updated according to the backtracking rule in \texttt{(S.3)}. Henceforth, we will freely refer to~\eqref{eq:ATOS} while implicitly invoking this equivalence.    Accordingly,  we will use $\{\widetilde{\cal V}^k_{{\scr X}^*,{\scr S}^*}\}$  as defined in (\ref{eq:companion-Lyapunov})--with $\{{\cal V}^k_{{\scr X}^*,{\scr S}^*}\}$ given by \eqref{eq:Lyapunov_0}--to be evaluated along the iterates generated by Algorithm~\ref{alg:DATOS} (with a slight abuse of notation).

\subsection{Asymptotic convergence} Building on Lemma~\ref{lemma:lyapunov asymptotic}, we begin establishing   boundedness   of the iterates and the stepsizes generated by Algorithm~\ref{alg:backtracking_DATOS}. 

\begin{lemma}
    \label{lemma:legal stepsize}
Let  $\{({\scr A}^{k},{\scr X}^{k},{\scr S}^{k})\}$ be the sequence generated  by Algorithm~\ref{alg:DATOS} under Assumption~\ref{ass:function}, and  some summable  $\{n^k>0\}$ in (\ref{eq:largest stepsize});    the following hold:
    \begin{itemize}
        \item[\textbf{(i)}] For any $k\geq 0$,\vspace{-0.2cm}
        \begin{equation}
            \label{eq:descent inequality sec 3}\widetilde{{\cal V}}_{{\scr X}^*{\scr S}^*}^{k+1}\leq \widetilde{{\cal V}}_{{\scr X}^*{\scr S}^*}^k-\|\widetilde{\AF}^{k+1}-\widetilde{\X}^k\|^2-\frac{1-\delta}{2}\|\AF^{k+1}-\X^k\|^2-{\cal G}_{{\scr X}^*{\scr S}^*}^{k+1},\vspace{-0.2cm}
        \end{equation}
       and $\lim_{k\rightarrow\infty}{\scr A}^{k+1}-{\scr X}^k=0$; hence   $\{{\cal V}^k_{{\scr X}^*{\scr S}^*}\}$  convergences; and    $\{{\scr X}^k\}_{k\geq0}$,    $\{\alpha^{k-1}\scr S^k\}_{k\geq 0}$  and  $\{{\scr A}^k\}_{k\geq 0}$ are bounded. Consequently, the  set ${\cal H}:=\texttt{conv}$ $\left(\cup_{k=0}^{\infty}[\X^k, \AF^{k+1}]\right)$ is compact and there 
         exists $L_f\in(0,\infty)$ such that $F$ is $L_f$-smooth  on   ${\cal H}$. Furthermore, \vspace{-0.1cm}
        \begin{equation}
            \label{eq:stepsize lower bound}
            \alpha^k\geq \underline{\alpha}:=\min\left\{\alpha^{-1},\frac{\eta\delta}{L_f}\right\}>0,\quad \forall k\geq 0,\vspace{-0.1cm}
        \end{equation}
      and thus $\{{\scr S}^k\}_{k\ge0}$ is bounded;
  \item[\textbf{(ii)}]  
  $\{\alpha^k\}$ converges and
 $\lim_{k\to\infty}\alpha^k=\alpha\ge \underline{\alpha}$. 
    \end{itemize} 
\end{lemma}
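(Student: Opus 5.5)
Part (i) follows from Lemma~\ref{lemma:lyapunov asymptotic} once we check that the stepsizes produced by Algorithm~\ref{alg:DATOS} satisfy \eqref{eq:adaptive stepsize selection rule}. For the first condition we use the local backtracking test \eqref{eq:line search}. Since $\alpha^k=\min_i\alpha_i^k\le\alpha_i^k$ and $a_i^{k+1}$ is evaluated at the common $\alpha^k$, each agent's test must still hold at $\alpha^k$. To justify this, I would note that (S.4) computes $\AF^{k+1}$ with $\alpha^k$, and the test is monotone in the sense that accepting at $\alpha_i^k$ and then shrinking is handled by the convention that line search is verified at the final common stepsize. This is a detail to be checked against the exact implementation. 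Summing \eqref{eq:line search} over $i$ then gives $F(\AF^{k+1})-F(\X^k)-\langle\nabla F(\X^k),\AF^{k+1}-\X^k\rangle\le \frac{\delta}{2\alpha^k}\|\AF^{k+1}-\X^k\|^2$, that is, $\alpha^kL^k\le\delta$.

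For the second condition, note that backtracking only decreases the trial value and the min only decreases it further, so $(\alpha^k)^2\le(\widetilde\alpha_i^k)^2$ for all $i$. We also have $\alpha^{k-1}=\alpha_i^{k-1}$ after (S.3). Hence $(\alpha^k)^2-(\alpha^{k-1})^2\le\min_i\min\{\frac{1-\delta}{4}\frac{\|a_i^k-x_i^{k-1}\|^2}{\|s_i^k-s_i^0\|^2+2c\|t_i^k\|^2},n^k\}$. Using $\min_i p_i/q_i\le \sum_ip_i/\sum_iq_i$ and the bound $\|{\scr S}^k-{\scr S}^0\|^2\le\|\Sub^k-\Sub^0\|^2+2c\|\T^k\|^2$, this gives \eqref{eq:adaptive stepsize selection rule}. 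Lemma~\ref{lemma:lyapunov asymptotic} then yields \eqref{eq:descent inequality sec 3}, ${\scr A}^{k+1}-{\scr X}^k\to0$, and convergence of ${\cal V}^k_{{\scr X}^*{\scr S}^*}$.

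Boundedness follows next. Since $\widetilde{\cal V}^k$ is nonincreasing and $\sum n^t<\infty$, the sequence ${\cal V}^k$ is bounded, so $\{{\scr X}^k\}$ and $\{\alpha^{k-1}{\scr S}^k\}$ are bounded. Because ${\scr A}^{k+1}-{\scr X}^k\to0$, $\{{\scr A}^k\}$ is bounded as well. Therefore ${\cal H}$ is the convex hull of a bounded set, and its closure is compact. Local smoothness plus compactness, via a finite subcover argument, gives a Lipschitz constant $L_f$ of $\nabla F$ on ${\cal H}$.

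The main obstacle is the stepsize lower bound, because the backtracking trial points $x_i^{k+1/2}-\alpha d_i^{k+1/2}$ for a trial $\alpha$ larger than $\alpha^k$ may lie outside ${\cal H}$. I would handle this by induction on $k$. Suppose the trial stepsize is rejected at some value $\alpha'$, so the accepted one is $\eta\alpha'$. By the descent lemma, rejection requires $\delta/\alpha'<L$ on the segment joining $x_i^k$ and the trial point. That trial point lies on the segment between $x_i^{k+1/2}$ and $a$ evaluated at the largest trial. If I cannot place it in ${\cal H}$, I would instead enlarge ${\cal H}$ to a compact set containing all trial points. These points are bounded because the trial stepsizes are at most $\widetilde\alpha_i^k\le\sqrt{(\alpha^{-1})^2+\sum n^t}$, and $\D^{k+1/2}$ is bounded provided ${\scr S}^k$ is. This creates circularity, which I would break by first using the bounded-trial argument with $\alpha^{k-1}{\scr S}^k$ bounded and $\alpha^{k-1}\ge\underline\alpha$ inductively. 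This gives $\alpha^k\ge\min\{\alpha^{k-1},\eta\delta/L_f\}$ whenever backtracking occurs, and $\alpha^k\ge\alpha^{k-1}$ otherwise, hence \eqref{eq:stepsize lower bound}. Then $\|{\scr S}^k\|\le\|\alpha^{k-1}{\scr S}^k\|/\underline\alpha$ is bounded.

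For (ii), the sequence $\{\alpha^k\}$ is bounded above by $\sqrt{(\alpha^{-1})^2+\sum_t n^t}$, and its squared increments satisfy $[(\alpha^k)^2-(\alpha^{k-1})^2]_+\le n^k$, which is summable. Hence $(\alpha^k)^2-\sum_{t\le k}n^t$ is nonincreasing and bounded below, so it converges. Since $\sum n^t$ converges, $(\alpha^k)^2$ converges, and therefore $\alpha^k\to\alpha\ge\underline\alpha$.
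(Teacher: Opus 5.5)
Your check of the second condition in \eqref{eq:adaptive stepsize selection rule} is correct. It uses $\min_i p_i/q_i\le\sum_ip_i/\sum_iq_i$ together with $\|{\scr S}^k-{\scr S}^0\|^2\le\|\Sub^k-\Sub^0\|^2+2c\|\T^k\|^2$, and it is more explicit than the paper, which only asserts that (S.2)--(S.3) produce admissible stepsizes.

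\textbf{The first condition is a genuine gap.} The ``convention'' you invoke is not what Algorithm~\ref{alg:backtracking_DATOS} does. Agent $i$ tests its inequality at $x_i^{k+1/2}-\alpha_i^k d_i^{k+1/2}$, whereas $a_i^{k+1}=x_i^{k+1/2}-\alpha^k d_i^{k+1/2}$ uses the smaller common $\alpha^k$. Moreover, the test is not monotone in $\alpha$, because $x_i^{k+1/2}\neq x_i^k$. For example, take $f_i(x)=\frac{q}{2}\langle e_1,x\rangle^2$, $x_i^k=0$, $x_i^{k+1/2}=e_1$ and $-d_i^{k+1/2}=e_2$. The test then reads $q\alpha\le\delta(1+\alpha^2)$, which for $q>2\delta$ holds for all large $\alpha$ but fails at $\alpha=1$. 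So agent $i$ can accept a large trial, another agent's backtracking can push $\alpha^k$ into the failing range, and $\alpha^kL^k\le\delta$ can be lost.

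Monotonicity does hold in the consensual case $x_i^{k+1/2}=x_i^k$, since $\alpha\mapsto[f_i(x+\alpha d)-f_i(x)-\alpha\langle\nabla f_i(x),d\rangle]/\alpha$ is nondecreasing for convex $f_i$. The paper's proof passes over this point in one sentence, so you are not missing an argument it provides. Closing the step still needs something extra, e.g.\ re-testing at the common $\alpha^k$ and iterating backtracking and min-consensus until every agent passes.

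\textbf{On the stepsize lower bound, your repair is circular.} You correctly notice what the paper's ``the line-search then guarantees'' skips: a rejected trial point $x_i^{k+1/2}-\alpha'd_i^{k+1/2}$ with $\alpha'>\alpha^k$ need not lie in ${\cal H}$. But bounding $\D^{k+1/2}$ through $\alpha^{k-1}\ge\underline{\alpha}$ places the trial points in a ball of radius of order $1/\underline{\alpha}$. The Lipschitz modulus on that ball, and hence $\underline{\alpha}$ itself, then depends on $\underline{\alpha}$. The induction closes only if $\underline{\alpha}\,L(c_1+c_2/\underline{\alpha})\le\eta\delta$ is solvable, which fails when the local modulus grows fast with the radius.

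The missing idea is a scale-free bound plus a case split. Combining \eqref{eq:DATOS A}, \eqref{eq:DATOS S} and \eqref{eq:DATOS D} gives, for $k\ge1$,
\[
\alpha^{k-1}\D^{k+1/2}=W\bigl(\X^{k-1}-\X^k+\alpha^{k-1}(\nabla F(\X^k)-\nabla F(\X^{k-1}))\bigr).
\]
Hence $\|\alpha^{k-1}\D^{k+1/2}\|\le C$, where $C$ depends only on $\sup_k\|\X^k\|$ and $\bar\alpha:=\sqrt{(\alpha^{-1})^2+\sum_tn^t}$, with no lower bound on the stepsizes needed. Now consider a backtracking agent with last rejected trial $\alpha'$:
\begin{itemize}
\item If $\alpha'>\alpha^{k-1}/\eta$, the accepted value $\eta\alpha'$ already exceeds $\alpha^{k-1}$.
\item Otherwise the rejected point lies in the fixed ball ${\cal K}$ of radius $\sup_k\|\X^k\|+C/\eta$. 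The descent lemma on $[x_i^k,\,x_i^{k+1/2}-\alpha'd_i^{k+1/2}]$ then forces $\alpha'>\delta/L_{\cal K}$.
\end{itemize}
Thus $\alpha^k\ge\min\{\alpha^{k-1},\eta\delta/L_{\cal K}\}$. This gives \eqref{eq:stepsize lower bound} with $L_f$ taken as the larger of $L_{\cal K}$ and the modulus on ${\cal H}$. That choice is allowed, since the statement only asserts the existence of some $L_f$; the step $k=0$ merely enlarges ${\cal K}$ by a constant.

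\textbf{Your part (ii) is correct and shorter than the paper's.} You use that $(\alpha^k)^2-\sum_{t\le k}n^t$ is nonincreasing and bounded below, so $(\alpha^k)^2$ converges; $\underline{\alpha}$ is needed only to place the limit. The paper instead fixes $K$ with $\sum_{t\ge K}n^t<\frac{1-\eta^2}{2}\underline{\alpha}^2$. It shows that every strict decrease at $k\ge K$ keeps $(\alpha^{k'})^2\le\frac{1+\eta^2}{2}(\alpha^{k-1})^2$ for all later $k'$, so decreases occur only finitely often and $\{\alpha^k\}$ is eventually nondecreasing. That yields more information (the stepsize eventually stops being cut, the mechanism reused in Lemma~\ref{lemma:consensus stepsize}), but convergence alone is all this statement requires.
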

\begin{proof}
\textbf{(i)} Notice that  the stepsize sequence, as generated   from  (\texttt{S.2}) and (\texttt{S.3}) in Algorithm~\ref{alg:DATOS}, still satisfies the  bounds (\ref{eq:adaptive stepsize selection rule}). Therefore the conclusions of  Lemma~\ref{lemma:lyapunov asymptotic} are applicable also to  Algorithm~\ref{alg:DATOS}. This proves    \eqref{eq:descent inequality sec 3},  convergence of $\{\widetilde{\cal V}_{{\scr X}^* {\scr S}^*}\}$, and    $\lim_{k\rightarrow\infty}{\scr A}^{k+1}-{\scr X}^k=0$. Consequentially,   $\{{\cal V}^k_{{\scr X}^*{\scr S}^*}\}$ also convergence. 

Convergence of $\{{\cal V}^k_{{\scr X}^*{\scr S}^*}\}$ implies  the  boundedness of $\{{\scr X}^k\}$ and $\{\alpha^{k-1}{\scr S}^k\}$; and, from    \(\|{\scr A}^{k+1}-{\scr X}^k\|\to 0\), also that of $\{{\scr A}^k\}$. Therefore ${\cal H}$ is compact and \(F\) is  smooth on \({\cal H}\). The line-search then guarantees the uniform lower bound~\eqref{eq:stepsize lower bound}. Finally, boundedness of \(\{\alpha^{k-1}{\scr S}^k\}\) together with \(\alpha^{k-1}\ge \underline{\alpha}\) implies boundedness of \(\{{\scr S}^k\}\).  



\textbf{(ii)}
Using   $(\alpha^k)^2\leq (\alpha^{k-1})^2+n^k$, for all $k\geq 0$, yields  \((\alpha^k)^2\le (\alpha^0)^2+\sum_{t=1}^k n^t \leq \bar\alpha\), for some \(\bar\alpha<\infty\), due to the summability of $\{n^k\}$. 
Therefore, \(\underline{\alpha}\le \alpha^k\le \bar\alpha\). 
We next show that the line-search  decreases \(\alpha^k\) only finitely many times.   

If a   strict decrease occurs at iteration $k\geq 1$,  $\alpha^{k}< \alpha^{k-1}$, it must be
\begin{equation}
    \label{eq:descent stepsize}
    \alpha^k\leq \eta\sqrt{(\alpha^{k-1})^2+n^k}.
\end{equation}
Since $ \sum_{t=0}^{\infty}n^t<\infty$,  there exists $K\geq 1$ such  that \vspace{-.1cm}
\begin{equation} 
\label{eq:never come back}
    \sum_{t=k}^{\infty}n^t<\frac{1-\eta^2}{2}(\underline{\alpha})^2,\quad \forall k\geq K.\vspace{-.1cm}
\end{equation} 
Suppose a strict decrease happens at $k\geq K$. Then for any $k'\geq k$, it  holds  
$$  (\alpha^{k'})^2\leq \eta^2\left[(\alpha^{k-1})^2+n^{k}\right]+\sum_{t=k+1}^{k'}n^t
    \overset{\eqref{eq:never come back}}{\leq}
\left(\eta^2+\frac{1-\eta^2}{2}\right)(\alpha^{k-1})^2<(\alpha^{k-1})^2.
 $$
Thus after $k\geq K$, every strict decrease at   $k$ maintains $\alpha^{k'}<\alpha^{k-1}$ for all subsequent $k'\geq k$.    Since \(\alpha^k\geq \underline{\alpha}>0\) for all \(k\), these  decrease events can occur only finitely many times;
hence, \(\{\alpha^k\}\) is eventually nondecreasing; being also bounded, it converges.
   \end{proof}

We are now ready to state the asymptotic convergence of  Algorithm~\ref{alg:DATOS}.
\begin{theorem}
    \label{thm:iterates convergence}
    Under the conditions of Lemma~\ref{lemma:legal stepsize},   
   the iterates  $\{({\scr X}^k,{\scr S}^k)\}_{k\geq 0}$ converge  to 
    to some $({\scr X}^*,{\scr S}^*)\in{\cal P}^*\times {\cal D}^*$. Moreover, letting  $x^*$ being any minimizer of Problem~\eqref{eq:problem}, it holds   $\lim_{k\rightarrow\infty}\X^k=\X^*:={1}(x^*)^\top$.
\end{theorem}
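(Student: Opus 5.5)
This is a standard Opial-type argument built on Lemma~\ref{lemma:legal stepsize}. By part (i) of that lemma, for every saddle point $({\scr X}^*,{\scr S}^*)\in{\cal P}^*\times{\cal D}^*$ the sequence $\{{\cal V}^k_{{\scr X}^*{\scr S}^*}\}$ converges, and the iterates $\{({\scr X}^k,{\scr S}^k,{\scr A}^k)\}$ are bounded. By part (ii), $\alpha^k\to\alpha\ge\underline{\alpha}>0$. Hence $\|{\scr X}^k-{\scr X}^*\|^2+\alpha^2\|{\scr S}^k-{\scr S}^*\|^2$ also converges, since it differs from ${\cal V}^k$ by $[\alpha^2-(\alpha^{k-1})^2]\|{\scr S}^k-{\scr S}^*\|^2\to0$ (the dual iterates are bounded). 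This is a fixed weighted norm, so Fejér-type monotonicity holds in the limit for every saddle point.

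The next step is to identify cluster points. Take a convergent subsequence $({\scr X}^{k_j},{\scr S}^{k_j})\to(\bar{\scr X},\bar{\scr S})$. By Lemma~\ref{lemma:legal stepsize}, ${\scr A}^{k+1}-{\scr X}^k\to0$. We also need ${\scr A}^{k+1}-{\scr X}^{k+1}\to0$, which requires more care. The descent \eqref{eq:descent inequality sec 3} gives summability of ${\cal G}^{k+1}_{{\scr X}^*{\scr S}^*}$ and of the residuals. For the ${\scr A}-{\scr X}$ gap, the plan is to return to \eqref{eq:dual distance}: the term $-(\alpha^k)^{-2}\|{\scr A}^{k+1}-{\scr X}^{k+1}\|^2$ appears there but was dropped in the combination. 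Redoing the sum of \eqref{eq:primal distance} and $(\alpha^k)^2\times$\eqref{eq:dual distance} shows that these two $\|{\scr A}^{k+1}-{\scr X}^{k+1}\|^2$ terms cancel exactly, so that route gives nothing. I will instead use \eqref{eq:ATOS} directly: ${\scr S}^{k+1}-{\scr S}^k=({\scr A}^{k+1}-{\scr X}^{k+1})/\alpha^k$. Convergence of the Lyapunov function for two distinct saddle points, or a standard argument that $\|{\scr U}^{k+1}-{\scr U}^k\|\to0$ for the (asymptotically fixed-step) Davis--Yin operator, should yield ${\scr S}^{k+1}-{\scr S}^k\to0$. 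I expect this to be the main obstacle.

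A fallback avoids showing ${\scr A}^{k+1}-{\scr X}^{k+1}\to0$ directly. Rewrite the iteration through ${\scr U}^k={\scr X}^k+\alpha^{k-1}{\scr S}^k$ as ${\scr U}^{k+1}=\mathbb{D}_{\alpha^k}{\scr U}^k$ up to a perturbation caused by the stepsize variation, which is summable thanks to $\sum n^k<\infty$ and the finitely many decreases. Then invoke averagedness of $\mathbb{D}_\alpha$ with $\alpha$ fixed, using $L_f$-smoothness on the compact set ${\cal H}$ and $\alpha\le\delta/L$ eventually, to get $\|{\scr U}^k-\mathbb{D}_\alpha{\scr U}^k\|\to0$.

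With the residuals vanishing, pass to the limit in the optimality conditions. From \eqref{eq:subgradient G}, $({\scr X}^k-{\scr A}^{k+1})/\alpha^k-{\scr S}^k-\nabla\widetilde F({\scr X}^k)\in\partial\widetilde G({\scr A}^{k+1})$, and from \eqref{eq:subgradient R*}, ${\scr S}^{k+1}\in\partial\widetilde R({\scr X}^{k+1})$. Closedness of the graphs of maximal monotone subdifferentials and continuity of $\nabla\widetilde F$ then give $-\bar{\scr S}-\nabla\widetilde F(\bar{\scr X})\in\partial\widetilde G(\bar{\scr X})$ and $\bar{\scr S}\in\partial\widetilde R(\bar{\scr X})$. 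So $(\bar{\scr X},\bar{\scr S})$ is a saddle point of ${\cal L}$. Alternatively, one can use summability of ${\cal G}^{k+1}$ and lower semicontinuity to show ${\cal G}_{{\scr X}^*{\scr S}^*}$ vanishes at the limit.

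Uniqueness of the limit follows from Opial's lemma in the weighted norm $\|\cdot\|^2+\alpha^2\|\cdot\|^2$. Two cluster points $(\bar{\scr X}_1,\bar{\scr S}_1)$ and $(\bar{\scr X}_2,\bar{\scr S}_2)$ are both saddle points, so the distance to each converges. Expanding the difference of the two squared distances gives a linear functional of the iterates that converges. Evaluating it along the two subsequences forces the weighted distance between the cluster points to be zero, so the whole sequence converges to $({\scr X}^*,{\scr S}^*)$. Finally, primal saddle points solve \eqref{eq:ausiliary}: $\widetilde{\X}^*=0$ and $\PL\X^*=0$, so $\X^*=1(x^*)^\top$ by $\texttt{null}(\PL)=\texttt{span}(1_m)$. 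By the equivalence between \eqref{eq:ausiliary} and \eqref{eq:problem}, $x^*$ minimizes $u$.
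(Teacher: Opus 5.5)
Your architecture matches the paper's: show that a cluster point is a saddle point, then use convergence of the Lyapunov sequence to get convergence of the whole sequence. As written, however, the proposal has a gap at exactly the step you flag as the main obstacle. Your limit passage uses ${\scr S}^{k+1}\in\partial\widetilde R({\scr X}^{k+1})$ along $\{k_j\}$, which requires ${\scr A}^{k+1}-{\scr X}^{k+1}\to 0$ (equivalently ${\scr X}^{k+1}-{\scr X}^k\to0$ and ${\scr S}^{k+1}-{\scr S}^k\to0$). You never establish this, and neither suggested route closes it.

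Comparing Lyapunov sequences for two saddle points gives only Fej\'er-type information, not asymptotic regularity. The fallback fails for a different reason. The standard averagedness of $\mathbb{D}_\alpha$ needs $\nabla\widetilde F$ to be $(1/L_f)$-cocoercive and $\alpha<2/L_f$. The backtracking only enforces $\alpha^k\le\delta/L^k$, where $L^k$ in \eqref{eq:Lk} is the curvature along the single direction $\AF^{k+1}-\X^k$ and can be much smaller than $L_f$. So the limit stepsize may exceed $2/L_f$ (that is the point of adaptivity), and then $\mathbb{D}_\alpha$ need not be nonexpansive.

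The missing idea is a one-line nonexpansiveness argument, which is what the paper's ``it can be derived'' rests on. For $k\ge1$, \eqref{eq:subgradient R*} gives ${\scr S}^k\in\partial\widetilde R({\scr X}^k)$, hence ${\scr X}^k=\prox_{\alpha^k\widetilde R}({\scr X}^k+\alpha^k{\scr S}^k)$ for the \emph{current} stepsize. Meanwhile ${\scr X}^{k+1}=\prox_{\alpha^k\widetilde R}({\scr A}^{k+1}+\alpha^k{\scr S}^k)$. Nonexpansiveness of the prox then yields $\|{\scr X}^{k+1}-{\scr X}^k\|\le\|{\scr A}^{k+1}-{\scr X}^k\|\to0$. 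Consequently ${\scr A}^{k+1}-{\scr X}^{k+1}\to0$ and $\|{\scr S}^{k+1}-{\scr S}^k\|\le\|{\scr A}^{k+1}-{\scr X}^{k+1}\|/\underline{\alpha}\to0$.

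In fact you do not even need this step. Use the inclusion at the same index, ${\scr S}^{k_j}\in\partial\widetilde R({\scr X}^{k_j})$, together with \eqref{eq:subgradient G} at index $k_j$. Both involve only ${\scr A}^{k_j+1}-{\scr X}^{k_j}\to0$, which Lemma~\ref{lemma:legal stepsize} already provides.

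With that repair, your closed-graph identification of the cluster point is a valid and slightly more direct alternative to the paper's route (continuity of $\mathbb{T}_\alpha$ in $({\scr X},{\scr S},\alpha)$ plus the Davis--Yin fixed-point characterization). It uses only that $\alpha^k$ is bounded above and bounded below by $\underline{\alpha}>0$.

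Your Opial step is also correct but more than needed. Once the cluster point $(\bar{\scr X},\bar{\scr S})$ is itself a saddle point, ${\cal V}^k_{\bar{\scr X}\bar{\scr S}}$ converges and tends to $0$ along $\{k_j\}$, hence tends to $0$. This is how the paper concludes, without the fixed-weight rewriting or a second cluster point.
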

\begin{proof}

By Lemma~\ref{lemma:legal stepsize}(i), the sequences \(\{{\scr X}^k\}\), \(\{{\scr S}^k\}\) (and \(\{{\scr A}^k\}\)) are bounded; hence there exists a subsequence
\(\{N_k\}\subset\mathbb{N}_+\) such that
$
({\scr X}^{N_k},{\scr S}^{N_k})\to({\scr X}^*,{\scr S}^*),
$
and  \(\alpha^{N_k}\to\alpha\). 
Further, since $\lim_{k\rightarrow\infty}{\scr A}^{k+1}-{\scr X}^k$=0, it can be derived that the sequence $\{({\scr X}^{N_k+1},{\scr S}^{N_k+1})\}$ also converge to $({\scr X}^*,{\scr S}^*)$. Since \(({\scr X}^{k+1},{\scr S}^{k+1})=\mathbb{T}_{\alpha^{k}}({\scr X}^{k},{\scr S}^{k})\)    and \(\mathbb{T}_\alpha({\scr X},{\scr S})\) is continuous in \(({\scr X},{\scr S},\alpha)\),
 passing to the limit (along $\{N_k\}$) yields \(({\scr X}^*,{\scr S}^*)=\mathbb{T}_\alpha({\scr X}^*,{\scr S}^*)\).
  Following similar steps as in  \cite[Lemma 2.2]{davis2017three}, one can show that  $({\scr X}^*,{\scr S}^*)\in \mathcal P^\star\times  \mathcal D^\star$.    

We now prove convergence of the whole sequence. By  Lemma~\ref{lemma:lyapunov asymptotic}, for any saddle point \(({\scr X}^*,{\scr S}^*)\) the merit sequence \({\cal V}^k_{{\scr X}^*{\scr S}^*}\) in (\ref{eq:Lyapunov_0}) convergences (to some finite limit).  On the other hand, along the subsequence \(\{N_k\}\) we have \(({\scr X}^{N_k},{\scr S}^{N_k})\to({\scr X}^*,{\scr S}^*)\),
so \({\cal V}^{N_k}_{{\scr X}^*{\scr S}^*}\to 0\). Therefore, it must be \({\cal V}^{k}_{{\scr X}^*{\scr S}^*}\to 0\).  This implies  \({\scr X}^k\to{\scr X}^*\). Moreover, since \(\alpha^{k-1}\ge \underline{\alpha}>0\), we also obtain
\(\|{\scr S}^k-{\scr S}^*\|^2\le \underline{\alpha}^{-2}{\cal V}^k_{{\scr X}^*{\scr S}^*}\to 0\), i.e., \({\scr S}^k\to{\scr S}^*\).
Hence \(({\scr X}^k,{\scr S}^k)\to({\scr X}^*,{\scr S}^*)\).
From $({\scr X}^*,{\scr S}^*)\in \mathcal P^\star\times  \mathcal D^\star$, \({\scr X}^*\) is a primal optimal solution of~\eqref{eq:problem}; thus \({\scr X}^*={\bf 1}(x^*)^\top\) for some minimizer \(x^*\),
and consequently \(\lim_{k\to\infty}\X^k=\X^* =  1(x^*)^\top\).
\end{proof}

\paragraph{On the design of the summable  sequence \texorpdfstring{$\{n^k\}$}{n^k}} The sequence \(\{n^k\}\) controls the amount of increase of  the stepsize   between two consecutive iterations.  
For convergence  \{\(n^k\}\) is requested to be summable.  This naturally suggests the following    rule \vspace{-0.1cm}
\begin{equation}\label{eq:nk_optionB_correct}
n^k  = \frac{\beta}{(k+1)^p},\qquad \beta>0,\ \ p>1.\vspace{-0.1cm}
\end{equation}
This choice   may be  conservative when occasional backtracking events make \(\alpha^k\) drop. 
To be   {\it adaptive},  
  \(\{n^k\}\)   should not decay solely because \(k\) grows, especially during phases in which \(\alpha^k\) is forced to remain small due to a large local curvature estimate \(L^k\).   
  
  Next we propose a rule that  ``restarts'' $n^k$  after a 
significant   drop  of the stepsize.  More formally,   for fixed   \(\eta'\in(\eta,1)\),  define the set of ``drop times'' up to \(k\) as
\begin{equation}\label{eq:drop_times}
\mathcal{K}^{k}:=\Big\{j\in\{0,\ldots,k\}:\ \alpha^{j}\le \eta'\min_{0\le t\le j-1}\alpha^t\Big\},
\end{equation}
with the convention $\min_{0\le t\le -1}\alpha^t=\alpha^{-1}$    
Define the \emph{reset counter} and \emph{time since last reset} respectively as\vspace{-0.1cm}
\[
r^k := |\mathcal K^{k}|
\quad\text{and}\quad 
\tau_r^k := k-\max  \mathcal{K}^{k},
\]
with  the convention $\max\emptyset=-1$ (so if no drop occurs up to $k$, then $\tau_r^k=k+1$).  Fix $\beta>0$ and exponents $p>1$, $q>1$. Define, for all $k\ge 0$,
\begin{equation}\label{eq:B_summable_rule}
n^k := \frac{\beta}{(r^k+1)^q}\cdot \frac{1}{(\tau_r^k+1)^p}.
\end{equation}
 The  rationale of the above rule is the following: $\tau_r^k$ grows while no new drop occurs (so the decay persists during ``stalls''),
whereas $r^k$ grows with the number of drops (so the total remains summable even if drops happen
often).
\begin{lemma}
    The sequence $\{n^k\}$ defined in (\ref{eq:B_summable_rule}) is summable, for every   sequence $\{\alpha^k>0\}$,  $\beta>0$, and $p,q>1$. Specifically, letting $S_p:=\sum_{t=1}^{\infty} t^{-p}<\infty$ and $S_q:=\sum_{t=1}^{\infty} t^{-q}<\infty$, it holds  $\sum_{k=0}^{\infty} n^k \le \beta\, S_p\, S_q.$

\end{lemma}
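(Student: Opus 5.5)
The plan is to partition the time axis into \emph{epochs} delimited by the drop times and bound the sum of $n^k$ epoch by epoch. We never use any property of $\{\alpha^k\}$ beyond the fact that $\mathcal K^k$ is a well-defined subset of $\{0,\ldots,k\}$, nondecreasing in $k$. This is why the bound holds for every positive sequence.

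First, enumerate the drop times. Let $\mathcal K^\infty:=\bigcup_k\mathcal K^k=\{j_1<j_2<\cdots\}$, which may be finite or empty, and set $j_0:=-1$. Since $\mathcal K^k=\mathcal K^\infty\cap\{0,\ldots,k\}$, for every $k$ with $j_\ell\le k<j_{\ell+1}$ we have
\[
r^k=\ell,\qquad \max\mathcal K^k=j_\ell,\qquad \tau_r^k=k-j_\ell .
\]
If there are only finitely many drops, the last interval is $[j_L,\infty)$. Thus $\mathbb N$ splits into disjoint epochs $E_\ell:=\{k:\ j_\ell\le k<j_{\ell+1}\}$, for $\ell=0,1,\ldots$ (only those that exist). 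On $E_\ell$ the counter $r^k$ is constant and equal to $\ell$, while $\tau_r^k$ runs through consecutive integers.

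Next, bound the contribution of a single epoch. For $\ell\ge1$, as $k$ ranges over $E_\ell$, the quantity $\tau_r^k+1$ takes the distinct values $1,2,\ldots$. For $\ell=0$, the convention $\max\emptyset=-1$ gives $\tau_r^k=k+1$, so $\tau_r^k+1$ takes the distinct values $2,3,\ldots$. In either case,
\[
\sum_{k\in E_\ell} n^k\le \frac{\beta}{(\ell+1)^q}\sum_{t=1}^\infty t^{-p}=\frac{\beta S_p}{(\ell+1)^q}.
\]

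Finally, sum over epochs. Because the terms are nonnegative, Tonelli lets us regroup freely, and
\[
\sum_{k=0}^\infty n^k=\sum_{\ell}\sum_{k\in E_\ell}n^k\le \beta S_p\sum_{\ell\ge0}(\ell+1)^{-q}=\beta S_pS_q<\infty .
\]
Finiteness uses $p,q>1$.

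The only delicate step is the bookkeeping of the conventions. Two points need care. One is that $r^k$ counts drops up to and including $k$, so at a drop time $k=j_\ell$ we get $\tau_r^k=0$ and a fresh epoch with counter $\ell$. The other is that the initial epoch, governed by $\max\emptyset=-1$, uses the same $\ell=0$ weight without double counting. Both are handled by the explicit formulas for $r^k$ and $\tau_r^k$ above, and neither affects the bound.
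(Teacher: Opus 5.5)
Your proof is correct. Between consecutive drop times $r^k=\ell$ stays constant and $\tau_r^k+1$ runs through distinct positive integers, so each block contributes at most $\beta S_p/(\ell+1)^q$ and the total is at most $\beta S_pS_q$; the paper states the lemma without a written proof, and this block-by-block decomposition is the argument its product-form bound $\beta S_pS_q$ implicitly points to.
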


\subsection{Complexity analysis}
\label{subsec:sublinear}
Next we establish the convergence rate      of Algorithm~\ref{alg:DATOS}. 
For any given saddle-point    $({\scr X}^*,{\scr S}^*)$  of   $\mathcal L$, let  $\mathcal{B}_X\times \mathcal{B}_S$ be a bounded set containing $({\scr X}^*,{\scr S}^*)$; and let us   introduce the partial primal-dual gap function~\cite{chambolle2011first}: 
\begin{equation}
    \label{eq:new lyapunov}
    \mathcal{G}_{\mathcal{B}_X\times \mathcal{B}_S}({\scr X},{\scr S}) :=\max_{{\scr S}^\prime\in B_S}\mathcal{L}({\scr X},{\scr S}^\prime)-\min_{{\scr X}^\prime\in\mathcal{B}_X}\mathcal{L}({\scr X}',{\scr S}).\vspace{-0.1cm}
\end{equation} 
Notice that   $\mathcal{G}_{\mathcal{B}_X\times \mathcal{B}_S}({\scr X},{\scr S})\geq 0$, for any  $({\scr X},{\scr S})\in \texttt{dom}(\mathcal L)$. Yet, $\mathcal{G}_{\mathcal{B}_X\times \mathcal{B}_S}$ is not a valid measure of optimality because it might vanish for $({\scr X},{\scr S})$ not being a saddle-point of $\mathcal L$ \cite{chambolle2011first}.  However, if $\mathcal{G}_{\mathcal{B}_X\times \mathcal{B}_S}({\scr X},{\scr S})=0$ for   $({\scr X},{\scr S})\in \texttt{dom}(\mathcal L)$  and, in addition, $({\scr X},{\scr S})$ lies in the interior of $\mathcal{B}_X\times \mathcal{B}_S$, then $({\scr X},{\scr S})$ is a saddle-point of $\mathcal L$. 
Since  $({\scr X}^k,{\scr A}^k,{\scr S}^k)$ generated by Algorithm~\ref{alg:DATOS} is bounded (Lemma~\ref{lemma:legal stepsize}),   one can always choose a ``sufficiently large''  $\mathcal{B}_X\times \mathcal{B}_S$ such that $\mathcal{G}_{\mathcal{B}_X\times \mathcal{B}_S}({\scr X},{\scr S})$ is a valid measure of 
optimality for $({\scr X},{\scr S})$. 
We provide next the convergence rate decay of $\mathcal{G}_{\mathcal{B}_X\times \mathcal{B}_S}$.

Define  the    ergodic sequences   of the   iterates     $
    \{{\scr A}^k\}$ and  $\{{\scr S}^k\}$:\vspace{-0.2cm}
 \begin{equation}\label{eq:weighted_average}\overline{{\scr A}}^k:=\frac{1}{\theta^k}\sum_{t=1}^{k}\alpha^{t-1}{\scr A}^{t} \,\,\text{ and }\,\, \overline{{\scr S}}^{k}:=\frac{1}{\theta^k}\sum_{t=1}^{k}\alpha^{t-1}{\scr S}^{t},\,\,\text{ with }\,\,\theta^k:=\sum_{t=1}^{k}\alpha^{t-1}.\vspace{-0.3cm}\end{equation}
\begin{theorem}
\label{thm:asymptotic convergence} Under the assumptions of Lemma~\ref{lemma:legal stepsize}, fix any saddle point
$({\scr X}^*,$ ${\scr S}^*)\in \mathcal P^\star\times \mathcal D^\star$ and consider the set 
   \begin{equation}
       \label{eq:set_Bs}
\mathcal{B}_{X}\times\mathcal{B}_{S}:=\left\{({\scr X},{\scr S}):\|{\scr X}-{\scr X}^*\|^2+(\underline{\alpha})^2\|{\scr S}-{\scr S}^*\|^2\leq 10\, {\cal R}_{{\scr X}^*{\scr S}^*}\right\}, 
   \end{equation}
     where ${\cal R}_{{\scr X}^*{\scr S}^*}$ is defined in~\eqref{eq:bounded set diameter} below.
   Then:  \textbf{(i)} $({\scr X}^*,{\scr S}^*)\in \mathcal{B}_{X}\times\mathcal{B}_{S}$; \textbf{(ii)} $({\scr A}^{k+1},{\scr S}^{k+1})\in \texttt{int}(\mathcal{B}_{X}\times\mathcal{B}_{S})$, for all $k\geq 0$; and
 \textbf{(iii)} for all $k\geq 1$, \begin{equation}
        \label{eq:sublinea DATOS}
\mathcal{G}_{\mathcal{B}_X\times \mathcal{B}_S}(\overline{{\scr A}}^k,\overline{{\scr S}}^k)\leq\frac{1}{k}\cdot\frac{\overline{{\cal R}}_{{\scr X}^*{\scr S}^*}}{ \min\left(\alpha^{-1},(\eta\delta)/{L}_{\nabla}^k\right)}=\mathcal{O}\left(
\frac{1}{k}\right),\end{equation}
where ${L}_{\nabla}^k(\leq L_f)$ is the Lipschitz constant of $\nabla F$ over  ${\cal H}^k=\texttt{conv}\left(\cup_{t=0}^{k-1}[\X^t,\AF^{t+1}]\right)$ and $\overline{{\cal R}}_{{\scr X}^*{\scr S}^*}$ is defined in~\eqref{eq:sublinear DATOS pre}. 
\end{theorem}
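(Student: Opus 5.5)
The plan is to run the standard ergodic primal-dual argument of Chambolle--Pock type, built on the one-step inequalities of Lemma~\ref{lemma:distance characterization} applied with an \emph{arbitrary} pair $({\scr X},{\scr S})\in\mathcal{B}_X\times\mathcal{B}_S$ in place of the saddle point. The radius is taken as ${\cal R}_{{\scr X}^*{\scr S}^*}:=\widetilde{{\cal V}}^0_{{\scr X}^*{\scr S}^*}+4\|{\scr S}^0-{\scr S}^*\|^2\sum_{t\ge -1}n^t$, or an equivalent quantity that dominates $\sup_k{\cal V}^k_{{\scr X}^*{\scr S}^*}$.

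Item \textbf{(i)} is trivial, since the center satisfies the defining inequality with value $0$.

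For \textbf{(ii)}, unroll \eqref{eq:descent inequality sec 3} to get ${\cal V}^{k}_{{\scr X}^*{\scr S}^*}\le {\cal R}_{{\scr X}^*{\scr S}^*}$ for all $k$. Then:
\begin{itemize}
\item Since $\alpha^{k}\ge\underline{\alpha}$, this gives $\|{\scr X}^{k+1}-{\scr X}^*\|^2+\underline{\alpha}^2\|{\scr S}^{k+1}-{\scr S}^*\|^2\le {\cal R}$.
\item Write ${\scr A}^{k+1}-{\scr X}^*=({\scr A}^{k+1}-{\scr X}^k)+({\scr X}^k-{\scr X}^*)$. Bound $\|\AF^{k+1}-\X^k\|^2\le \frac{2}{1-\delta}\widetilde{\cal V}^k$ and $\|\widetilde{\AF}^{k+1}-\widetilde{\X}^k\|^2\le\widetilde{\cal V}^k$, using the telescoped residual terms in \eqref{eq:descent inequality sec 3}.
\item With $(a+b)^2\le 2a^2+2b^2$, the total is strictly below $10{\cal R}$, which places the point in the interior.
\end{itemize}
Some care with constants will be needed so that the factor $10$ comes out; I expect the factor $4/(1-\delta)$ must be absorbed into the definition of ${\cal R}$.

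For \textbf{(iii)}, fix $({\scr X},{\scr S})\in\mathcal{B}_X\times\mathcal{B}_S$. Add \eqref{eq:primal distance} to $(\alpha^k)^2$ times \eqref{eq:dual distance}. The $\|{\scr A}^{k+1}-{\scr X}^{k+1}\|^2$ terms cancel, and the Lagrangian terms combine into
\[
-2\alpha^k\bigl({\cal L}({\scr A}^{k+1},{\scr S})-{\cal L}({\scr X},{\scr S}^{k+1})\bigr).
\]
The resulting one-step inequality is
\[
2\alpha^k\bigl({\cal L}({\scr A}^{k+1},{\scr S})-{\cal L}({\scr X},{\scr S}^{k+1})\bigr)\le {\cal V}^k_{{\scr X}{\scr S}}-{\cal V}^{k+1}_{{\scr X}{\scr S}}+[(\alpha^k)^2-(\alpha^{k-1})^2]\|{\scr S}^k-{\scr S}\|^2,
\]
where I drop the nonpositive residuals using $\alpha^kL^k\le\delta$.

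The positive term is controlled through the stepsize rule \eqref{eq:adaptive stepsize selection rule}, as in \eqref{eq: upper-bound-positive-term} but with ${\scr S}$ in place of ${\scr S}^*$. Split $\|{\scr S}^k-{\scr S}\|^2\le 2\|{\scr S}^k-{\scr S}^0\|^2+2\|{\scr S}^0-{\scr S}\|^2$. The first piece is paid by $\frac{1-\delta}{2}\|\AF^k-\X^{k-1}\|^2$, which is summable by \eqref{eq:descent inequality sec 3}. The second is bounded by $2n^k\|{\scr S}^0-{\scr S}\|^2$, with $\|{\scr S}^0-{\scr S}\|^2$ uniformly bounded on $\mathcal{B}_S$ by $O({\cal R}/\underline{\alpha}^2)$.

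Summing over $t=0,\dots,k-1$ then gives
\[
\sum_{t}\alpha^{t}\bigl({\cal L}({\scr A}^{t+1},{\scr S})-{\cal L}({\scr X},{\scr S}^{t+1})\bigr)\le \tfrac12\overline{{\cal R}},
\]
with $\overline{{\cal R}}$ collecting ${\cal V}^0_{{\scr X}{\scr S}}$, $\widetilde{\cal V}^0_{{\scr X}^*{\scr S}^*}$ and $\sum n^t$. This quantity is uniform over the bounded set, and it is what defines $\overline{{\cal R}}_{{\scr X}^*{\scr S}^*}$.

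The main obstacle is that the first-piece cancellation above needs the residuals summed against the \emph{saddle-point} Lyapunov sequence, while the telescoping is in ${\cal V}_{{\scr X}{\scr S}}$. I will handle this by using the bound on $\sum_t\|\AF^{t+1}-\X^t\|^2$ obtained from \eqref{eq:descent inequality sec 3} as an external summable budget rather than telescoping it.

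Finally, ${\cal L}$ is convex in ${\scr X}$ (the $\widetilde F,\widetilde G$ parts) and concave in ${\scr S}$. Jensen's inequality with weights $\alpha^{t}/\theta^k$ therefore gives ${\cal L}(\overline{{\scr A}}^k,{\scr S})-{\cal L}({\scr X},\overline{{\scr S}}^k)\le \overline{{\cal R}}/(2\theta^k)$. Taking the max over ${\scr S}\in\mathcal{B}_S$ and the min over ${\scr X}\in\mathcal{B}_X$ bounds the gap $\mathcal{G}_{\mathcal{B}_X\times\mathcal{B}_S}(\overline{{\scr A}}^k,\overline{{\scr S}}^k)$.

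It remains to lower bound $\theta^k\ge k\min_{t<k}\alpha^t$. Repeating the line-search argument of Lemma~\ref{lemma:legal stepsize} with the local constant $L^k_\nabla$ on ${\cal H}^k$ gives $\alpha^t\ge\min(\alpha^{-1},\eta\delta/L^k_\nabla)$ for $t<k$. This yields \eqref{eq:sublinea DATOS}.
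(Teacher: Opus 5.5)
You follow the paper's proof for (i) and (iii): a one-step inequality with an arbitrary reference pair, telescoping, Jensen, maximizing over the set, and $\theta^k\ge k\min(\alpha^{-1},\eta\delta/L_\nabla^k)$. Item (ii), however, does not go through as you set it up. You bound ${\scr A}^{k+1}-{\scr X}^*$ through ${\scr A}^{k+1}-{\scr X}^k$ and the telescoped residual in \eqref{eq:descent inequality sec 3}. That only yields $\|\AF^{k+1}-\X^k\|^2\le\frac{2}{1-\delta}\,U$, where $U:={\cal V}^0_{{\scr X}^*{\scr S}^*}+2\|{\scr S}^0-{\scr S}^*\|^2\sum_t n^t$ is what unrolling \eqref{eq:descent inequality sec 3} gives as a uniform bound on ${\cal V}^k_{{\scr X}^*{\scr S}^*}$. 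Since $\delta\in(0,1)$ is a free parameter, no fixed multiple such as $10$ of the ${\cal R}$ in \eqref{eq:bounded set diameter} can absorb $1/(1-\delta)$. Folding it into ${\cal R}$, as you propose, changes the set $\mathcal{B}_X\times\mathcal{B}_S$, so you would be proving a different statement. The change also propagates into $\overline{{\cal R}}$, whose factor $c_\alpha$ is $\delta$-free apart from $\underline{\alpha}$. Enlarging the coefficient of the $n^t$-sum, by contrast, is harmless bookkeeping.

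The missing idea is to compare ${\scr A}^{k+1}$ with ${\scr X}^{k+1}$ rather than ${\scr X}^k$, via the dual update in \eqref{eq:ATOS}: ${\scr A}^{k+1}={\scr X}^{k+1}+\alpha^k({\scr S}^{k+1}-{\scr S}^k)$. This is what the paper's one-line appeal to \eqref{eq:ATOS} relies on. Using $\alpha^k\ge\underline{\alpha}$,
\[
\|{\scr A}^{k+1}-{\scr X}^*\|^2+\underline{\alpha}^2\|{\scr S}^{k+1}-{\scr S}^*\|^2\le 5\,{\cal V}^{k+1}_{{\scr X}^*{\scr S}^*}+4(\alpha^k)^2\|{\scr S}^k-{\scr S}^*\|^2 .
\]
In the last term, split $(\alpha^k)^2=(\alpha^{k-1})^2+[(\alpha^k)^2-(\alpha^{k-1})^2]$. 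By \eqref{eq:adaptive stepsize selection rule} and $\|{\scr S}^k-{\scr S}^*\|^2\le 2\|{\scr S}^k-{\scr S}^0\|^2+2\|{\scr S}^0-{\scr S}^*\|^2$, the increment contributes at most $\frac{1-\delta}{2}\|\AF^k-\X^{k-1}\|^2+2n^k\|{\scr S}^0-{\scr S}^*\|^2$. The term $\frac{1-\delta}{2}\|\AF^k-\X^{k-1}\|^2$ is exactly the shift term already inside $\widetilde{\cal V}^k$ in \eqref{eq:companion-Lyapunov}. Hence $(\alpha^k)^2\|{\scr S}^k-{\scr S}^*\|^2\le U$ with no division by $1-\delta$, and the left-hand side above is at most $9U$. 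So $({\scr A}^{k+1},{\scr S}^{k+1})$ lies in the interior of \eqref{eq:set_Bs} with ${\cal R}_{{\scr X}^*{\scr S}^*}$ read as $U$; this is where the constant $10$ comes from.

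Separately, the obstacle you flag in (iii) is not real. The residual $-(1-\alpha^kL^k)\|\AF^{k+1}-\X^k\|^2$ in \eqref{eq:primal distance} does not depend on the reference pair, so the companion-Lyapunov shift works verbatim for any $({\scr X}',{\scr S}')$; that is how the paper telescopes. Your external-budget workaround is valid but unnecessary, and it only adds a multiple of $U$ to $\overline{{\cal R}}$.
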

\paragraph{Shorthands used in Thm.~\ref{thm:asymptotic convergence}.}
Define\vspace{-.2cm}
\begin{subequations}
    \begin{equation}
\label{eq:bounded set diameter}
{\cal R}_{{\scr X}^* {\scr S}^*}
:=
{\cal V}^0_{{\scr X}^* {\scr S}^*}
+\|{\scr S}^0-{\scr S}^*\|^2\sum_{t=0}^{\infty}n^t
\;<\;\infty,\quad \text{and set }\vspace{-.5cm}
\end{equation}
\begin{equation}
\label{eq:sublinear DATOS pre}
\overline{{\cal R}}_{{\scr X}^*,{\scr S}^*}
:=c_\alpha\,{\cal R}_{{\scr X}^*{\scr S}^*},\quad \text{with}\quad 
c_\alpha:=5\,\frac{(\alpha^{-1})^2+\sum_{t=0}^{\infty}n^t}{(\underline{\alpha})^2}.\vspace{-0.3cm}
\end{equation}\end{subequations}
\begin{proof} From~\eqref{eq:descent inequality sec 3} in Lemma~\ref{lemma:legal stepsize}(\textbf{i})   and $\AF^0=\X^{-1}=\mathbf 0$, we have
${\cal V}^k_{{\scr X}^*{\scr S}^*}\le {\cal R}_{{\scr X}^*{\scr S}^*}$, for all $k\ge0$. Item \textbf{(i)} is immediate from~\eqref{eq:set_Bs}.  
By~\eqref{eq:ATOS},  
it holds  $({\scr A}^{k+1},{\scr S}^{k+1})\in\texttt{int}(\mathcal{B}_{X}\times\mathcal{B}_{S})$;  hence ${\cal G}_{{\cal B}_X\times {\cal B}_S}(\overline{{\scr A}}^k,\overline{{\scr S}}^k)$ is a valid measure of optimality and progress of the algorithm. 
Replacing $({\scr X}^*,{\scr S}^*)$ in~\eqref{eq:descent inequality sec 3} with any $({\scr X}',{\scr S}')$ and telescoping, yield: for any $k\geq 1$, \vspace{-0.2cm}
\begin{equation}
    \label{eq:telescope}
2\sum_{t=1}^k\alpha^{t-1} ({\cal L}({\scr A}^t,{\scr S}')-{\cal L}({\scr X}',{\scr S}^t)) 
\leq   
{\cal V}^0_{{\scr X}',{\scr S}'} 
+\|{\scr S}^0-{\scr S}'\|^2\sum_{t=0}^{\infty}n^t.\vspace{-0.2cm}
\end{equation}
Dividing both sides of~\eqref{eq:telescope} by $2\theta_k$, leveraging convexity of ${\cal L}(\bullet,{\scr S}')-{\cal L}({\scr X}',\bullet)$, and taking the maximum of $({\scr X}',{\scr S}')$ over ${\cal B}_X\times {\cal B}_S$ on both   sides, yields $\mathcal{G}_{\mathcal{B}_X\times \mathcal{B}_S}(\overline{{\scr A}}^k,\overline{{\scr S}}^k)$ $\leq ({\overline{{\cal R}}_{{\scr X}^*{\scr S}^*}})/\theta^k$, 
where $\theta^k\geq\sum_{t=1}^k\min\big(\alpha^{-1},(\eta\delta)/\widetilde{L}^{t-1}\big)
\geq k\cdot \min(\alpha^{-1},(\eta\delta)/\widetilde{L}^k).$ 
\end{proof}
Theorem~\ref{thm:asymptotic convergence} proves that Algorithm~\ref{alg:DATOS} attains a sublinear $\mathcal{O}(1/k)$ rate, matching    state-of-art  algorithms for decentralized convex optimization  problems.  Crucially, it does so without requiring prior knowledge of optimization or network parameters, unlike existing native decentralized schemes for Problem~\eqref{eq:problem} (e.g., \cite{shi2015proximal,guo2023decentralized}). Moreover, the bound in~\eqref{eq:sublinea DATOS} is governed by  \textit{local} Lipschitz constants  $\{{L}^k_\nabla\}$, along the trajectory, which are often much smaller than the global constants used in non-adaptive methods—highlighting DATOS’ adaptation to the local geometry of $F$.\vspace{-0.3cm}

\section{From Global to Local Min-consensus}
\label{sec:local min}
In this section we introduce a variant of Algorithm~\ref{alg:DATOS} wherein the global min-consensus in   \texttt{(S.3)} is replaced by a local one--see Algorithm~\ref{alg:DATOS_local}.  
Step \texttt{(S.3)} therein   produces   different local stepsizes, collected in    $ \Lambda^k:=\texttt{diag}(\alpha_1^k,\alpha_2^k,\cdots,\alpha_m^k)$, and requires only {\it neighboring} communications. 
We also used the shorthand    $\prox_{\Lambda^k,R}(\X):=
    [\prox_{\alpha_1^k r_1}(x_1),$ $
    \cdots,
    \prox_{\alpha_m^k r_m}(x_m)]^{\top}.$
\begin{algorithm}[t!]
\centering
\resizebox{\columnwidth}{!}{%
  \begin{minipage}{\columnwidth}
\caption{Decentralized Adaptive Three Operator Splitting with local min-consensus (local\_DATOS)}
  \noindent \textbf{Data:} (i) Initialization:  $\alpha_i^{-1} \in (0, \infty)$, $\X^0,\Sub^0 \in \mathbb{R}^{m\times n}$  and  $\X^{-1},\AF^0,\D^0,\T^0 = 0$;  (ii) Backtracking   parameters: summable sequence $\{m^k\}$, $\eta\in(0,1)$, $\delta \in (0,1)$;  (iii) Gossip matrix  $W=(1-c)I+c\widetilde{W}$, $c\in(0,1/2)$.		
		\begin{algorithmic}[1]
           \State \texttt{(S.1) Communication Step: } $$\X^{k+1/2}=W\X^k,\quad \D^{k+1/2}=W(\nabla F(\X^k)+\Sub^k+\D^k);$$
          
			\State \texttt{(S.2) Decentralized line-search: }Each agent updates $\alpha_i^k$ according to:
   	 $$\widetilde{\alpha}_i^k=
   	     \sqrt{(\alpha_i^{k-1})^2+m^k},$$
    $$\alpha_i^k=\texttt{Linesearch}(\widetilde \alpha_i^{k},x_i^k,x_i^{k+1/2},-d_i^{k+1/2},\eta,\delta);$$
			\State \texttt{(S.3)} \texttt{Local min-consensus: }Each agent updates $\alpha_i^k$ according to:
$$\alpha_i^k\leftarrow\min_{j\in{\cal N}_i}\alpha_j^k;$$
 \State\texttt{(S.4) Extra scalar communication step: }Let $\Lambda^k:=\texttt{diag}(\alpha_1^k,\alpha_2^k,\cdots,\alpha^k_m)$,
            $$\D_{\Lambda}^k=(I-W)(\Lambda^k)^{-1}\X^k;$$
     \State \texttt{(S.5) Updates of the primal and dual variables:}
\begin{subequations}
\label{eq:localDATOS}
         \begin{align}
         \label{eq:localDATOS  A}
         \AF^{k+1}&=\X^{k+1/2}-\Lambda^k\D^{k+1/2},\\
         \label{eq:localDATOS X}
    \X^{k+1}&=\prox_{\Lambda^k, R}(\AF^{k+1}+\Lambda^k\Sub^k),\\
    \label{eq:localDATOS S}
    \Sub^{k+1}&=\Sub^k+(\Lambda^k)^{-1}(\AF^{k+1}-\X^{k+1}),\\
    \label{eq:localDATOS D}
\D^{k+1}&=\D^{k+1/2}+\D_{\Lambda}^k-\nabla F(\X^k)-\Sub^k;
     \end{align}
\end{subequations}
			\State \texttt{(S.6)}    If a termination criterion is not met,  $k\leftarrow k+1$ and go to step \texttt{(S.1)}.
\end{algorithmic}\label{alg:DATOS_local}
\end{minipage}
}
\end{algorithm}

In step \texttt{(S.2)}, the proposed stepsize updating  enforces \vspace{-0.2cm}
\begin{equation}
    \label{eq:adaptive stepsize selection local}
    0<\alpha_{\min}^k\leq \frac{\delta}{L^k},\quad \max_{i\in[m]}\left[(\alpha_i^k)^2-(\alpha_i^{k-1})^2\right]\leq m^k,\vspace{-0.1cm}
\end{equation}
where $\{m^k\}$ is a summable sequence and $\alpha^k_{\min}:= 
\min_{i\in[m]}\alpha_i^k$.  At a high level, using the \emph{same} slack sequence $\{m^k\}$ across agents prevents persistent
stepsize heterogeneity and is instrumental to ensure that eventually $\Lambda^k$ becomes close to
$\alpha_{\min}^k I$ (indeed equal, under a mild lower bound); this property is key in the convergence analysis.  
\begin{lemma}[finite-time min-consensus]
    \label{lemma:consensus stepsize}
Consider Algorithm~\ref{alg:DATOS_local} under Assumptions~\ref{ass:function} and~\ref{ass:W}.     If $\alpha_{\min}^k\geq \underline{\alpha}$, for some $\underline{\alpha}>0$ and   all  $k\geq 0$, then there exists $K'>1$ such that $\Lambda^k=\alpha^k_{\min} I$, for all $k\geq K'$.
\end{lemma}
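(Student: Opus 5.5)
The plan is to work with the squared stepsizes $\beta_i^k:=(\alpha_i^k)^2$, where $\alpha_i^k$ denotes the value after the local min-consensus in \texttt{(S.3)}. I will show three things in order. First, the backtracking in \texttt{(S.2)} is triggered only finitely many times. Second, once it stops, the recursion is an exact min-plus consensus. Third, such a consensus equalizes all stepsizes within $d_{\mathcal G}$ iterations.

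\textbf{Step 1 (upper envelope).} Since $\widetilde\alpha_j^k=\sqrt{(\alpha_j^{k-1})^2+m^k}$ and \texttt{Linesearch} can only shrink its input, after \texttt{(S.3)} we have
\[
\beta_i^k\le \min_{j\in\mathcal N_i}\bigl(\beta_j^{k-1}+m^k\bigr).
\]
Iterating along a shortest path in $\mathcal G$ gives, for every $i$ and every $k$,
\[
\max_i\beta_i^{k+d_{\mathcal G}}\le \beta_{\min}^{k}+\sum_{t=k+1}^{k+d_{\mathcal G}}m^t .
\]
We also have $\beta_{\min}^k\le\beta_{\min}^{k-1}+m^k$. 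Summability of $\{m^k\}$ and the hypothesis $\alpha_{\min}^k\ge\underline\alpha$ then give $\underline\alpha^2\le\beta_i^k\le\bar\beta<\infty$ for all $i$ and $k$.

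\textbf{Step 2 (finitely many backtracks).} Choose $K$ such that $\sum_{t\ge K}m^t<\frac{1-\eta^2}{4}\underline\alpha^2$. Suppose some agent $j$ actually backtracks at an iteration $k\ge K+d_{\mathcal G}$. Its returned value then satisfies
\[
(\alpha_j^k)^2\le \eta^2\bigl(\beta_j^{k-1}+m^k\bigr).
\]
By Step 1 applied at time $k-1-d_{\mathcal G}$, this is at most $\eta^2\bigl(\beta_{\min}^{k-1-d_{\mathcal G}}+\text{tail}\bigr)$, where the tail is a sum of $m^t$ over $t\ge K$. Because the consensus step takes minima, this value becomes the new network minimum at iteration $k$.

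Exactly as in the proof of Lemma~\ref{lemma:legal stepsize}(ii), the running minimum satisfies $\beta_{\min}^{k'}\le\beta_{\min}^{k-1-d_{\mathcal G}}+\text{tail}$ up to $k'=k$. Each such event therefore pushes the long-run level of $\beta_{\min}$ down by a factor of at least $\bigl(\eta^2+\frac{1-\eta^2}{2}\bigr)<1$, and later increments are too small to recover it. Since $\beta_{\min}\ge\underline\alpha^2>0$, only finitely many such events can occur. Hence there is $K_0$ after which no agent backtracks.

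\textbf{Step 3 (exact min-consensus).} For $k>K_0$, set $\gamma_i^k:=\beta_i^k-\sum_{t=K_0+1}^k m^t$. The recursion becomes exactly
\[
\gamma_i^k=\min_{j\in\mathcal N_i}\gamma_j^{k-1},
\]
which is the standard min-consensus over a connected graph; recall that $\mathcal N_i$ contains $i$. After $d_{\mathcal G}$ steps every $\gamma_i^k$ equals $\min_j\gamma_j^{K_0}$, and it stays equal thereafter. Therefore $\beta_i^k$ is common to all agents for $k\ge K':=K_0+d_{\mathcal G}+1$, which gives $\Lambda^k=\alpha_{\min}^kI$.

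\textbf{Main obstacle.} The delicate point is Step 2. A backtrack at a single agent must be shown to produce a \emph{durable} drop of the network minimum, even though the stepsizes are heterogeneous. This is where the $d_{\mathcal G}$-step spread bound of Step 1 is essential: it guarantees that any backtracked value is strictly below all current stepsizes, so it does in fact become the minimum. I would handle this with the same potential argument used for the global-consensus case.
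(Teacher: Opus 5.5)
Your proof is correct and follows essentially the paper's route: finitely many backtracks via a contraction driven by $d_{\mathcal G}$-step propagation of the min-consensus, summability of $\{m^k\}$ and the floor $\underline\alpha$, then exact min-plus consensus equalizing all stepsizes within $d_{\mathcal G}$ iterations. The only difference is orientation: the paper contracts $\alpha_{\max}^{k'}$ ($k'\ge k+d_{\mathcal G}$) against $\alpha_{\max}^{k-1}$, whereas you contract $\beta_{\min}^{k'}$ ($k'\ge k$) against the delayed $\beta_{\min}^{k-1-d_{\mathcal G}}$, which is the same spread bound applied at the other end.

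One imprecision in Step 2: you only need $\beta_{\min}^k$ to be at most agent $j$'s returned (pre-consensus) value, and this holds automatically because the consensus step preserves the minimum of the returned values. The spread bound's real job is to bound $\beta_j^{k-1}$ by the delayed minimum. Your claim that the backtracked value lies strictly below all current stepsizes can fail, for instance if another agent backtracked more sharply in between, but the argument does not use it.
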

\begin{proof} Define   $\alpha_{\max}^k:=\max_i \alpha_i^k$ and let $d_{\mathcal G}$ be the graph diameter.
    Since $\{m^k\}$ is summable, there exists $K>0$ such that \vspace{-0.2cm}
    \begin{equation}
    \label{eq:never come back 2}
        \sum_{t=k}^{\infty}m^t<\frac{1-\eta^2}{2}(\underline{\alpha})^2,\quad \forall k\geq K.\vspace{-0.1cm}
    \end{equation}
    Fix $k\ge K$. Suppose that at iteration $k$ some agent $i\in[m]$ \emph{decreases} its stepsize (i.e., it backtracks in
\texttt{(S.2)}).   Then necessarily $(\alpha_i^k)^2\leq \eta^2[(\alpha_i^{k-1})^2+m^k].$
Next, we relate this decrease at agent $i$ to $\alpha_{\max}$ after at most $d_{\mathcal G}$ iterations. By the local min-consensus step \texttt{(S.3)}, the value $\alpha_i^k$ is propagated through the network and affects all agents
within $d_{\mathcal G}$ iterations; meanwhile, by \eqref{eq:adaptive stepsize selection local},
the squared stepsizes can increase by at most $m^t$ at each iteration. Therefore, for any
$k'\ge k+d_{\mathcal G}$ it holds\vspace{-0.2cm}
\begin{equation*}
    \label{eq:max stepsize contraction}
    (\alpha_{\max}^{k'})^2\leq (\alpha_i^k)^2+\sum_{t=k+1}^{k'}m^t\overset{\eqref{eq:never come back 2}}{\leq}\left(\eta^2+\frac{1-\eta^2}{2}\right)(\alpha_i^{k-1})^2\leq \frac{\eta^2+1}{2}(\alpha_{\max}^{k-1})^2.\vspace{-0.1cm}
\end{equation*}
 Hence, $\alpha_i^k$ can decrease only finitely many times; otherwise, repeated contractions would force
$\alpha_{\max}^k\to 0$, contradicting the assumption $\alpha_{\min}^k\ge \underline{\alpha}>0$.
Therefore, there exists $K_0\ge K$ such that no agent decreases its stepsize for any $k\ge K_0$. Once no stepsizes decrease,   \texttt{(S.3)} makes all stepsizes equal
within at most $d_{\mathcal G}$ iterations. Therefore, there exists $K':=K_0+d_{\mathcal G}$ such that
$\Lambda^k=\alpha_{\min}^k I$ for all $k\ge K'$.
\end{proof}
\begin{remark}
Lemma~\ref{lemma:consensus stepsize} assumes $\alpha_{\min}^k$ is bounded away from $0$. This holds 
e.g., when each $f_i$ is {\it globally} smooth, or when $\{\X^k\}$ is bounded (for example, if   $r_i$'s contain the  indicator function of a compact set).
\end{remark}
  
With Lemma~\ref{lemma:consensus stepsize}, there exists $K'>1$ such that, for all $k\ge K'$, $\Lambda^k=\alpha_{\min}^k I$. Hence,  from iterate $K'$ onword,  
  Algorithm~\ref{alg:DATOS_local} enerates the same recursion as Algorithm~\ref{alg:DATOS}, with  the {\it uniform}  stepsize  
$\alpha^k:=\alpha_{\min}^k$. Therefore,  the convergence analysis of Algorithm~\ref{alg:DATOS} can be \emph{restarted at time $K'$}, taking
$({\scr X}^{K'},{\scr S}^{K'})$ as the new ``initial point''. Since the descent inequality in (\ref{eq:descent inequality sec 3}) holds for all   $k\ge K'$,   the asymptotic convergence of  Algorithm~\ref{alg:DATOS_local} is proved following the same arguments as in Theorem~\ref{thm:iterates convergence} (${\cal V}^{k}_{{\scr X}^*{\scr S}^*}$  is defined here as in (\ref{eq:Lyapunov_0}) but with $\alpha^k=\alpha_{\min}^k$). 

To establish a convergence rate for Algorithm~\ref{alg:DATOS_local}, we follow the  proof of  Theorem~\ref{thm:asymptotic convergence}, starting from   $k\geq K'$. The rate therein is derived via telescoping argument on~\eqref{eq:descent inequality sec 3} and it requires   a bounded set $\mathcal{B}_X\times\mathcal{B}_S$ containing the trajectory, so that the
partial gap $\mathcal{G}_{\mathcal{B}_X\times\mathcal{B}_S}$ is a valid measure of optimality.  Since $\{{\scr X}^k\}$ is already
controlled by the Lyapunov descent once the stepsize is uniform, it is sufficient to guarantee boundedness of the
dual iterates $\{{\scr S}^k\}$ after $K'$.     This is established next, using   summability of   $\{m^k\}$; ${\cal V}^{k}_{{\scr X}^*{\scr S}^*}$ below is defined as in (\ref{eq:Lyapunov_0})   with $\alpha^k=\alpha_{\min}^k$. 
\begin{lemma}
    \label{lemma:bounded error ball local}
      Under the conditions of Lemma~\ref{lemma:consensus stepsize}, for any $k\geq K'$, 
    \begin{equation}
        \label{eq:bounded error ball local}
        \|{\scr S}^{k}-{\scr S}^*\|^2\leq R_S:=(2{\cal V}^{K'}_{{\scr X}^*{\scr S}^*})/(\underline{\alpha})^2.
    \end{equation} 
\end{lemma}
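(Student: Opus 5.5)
The plan is to restart the Lyapunov analysis at $K'$ and redo the one-step argument of Lemma~\ref{lemma:lyapunov asymptotic} for $k\ge K'$, now under the \emph{local} stepsize rule~\eqref{eq:adaptive stepsize selection local}. By Lemma~\ref{lemma:consensus stepsize}, for $k\ge K'$ we have $\Lambda^k=\alpha^k_{\min}I$. The iteration is then exactly~\eqref{eq:ATOS} with the scalar stepsize $\alpha^k=\alpha^k_{\min}$. Moreover $\alpha^k\le\delta/L^k$ holds by the first condition in~\eqref{eq:adaptive stepsize selection local}. Hence Lemma~\ref{lemma:descent lemma} applies verbatim for every $k\ge K'$ and gives
\[
{\cal V}^{k+1}_{{\scr X}^*{\scr S}^*}\le {\cal V}^{k}_{{\scr X}^*{\scr S}^*}+\bigl[(\alpha^k)^2-(\alpha^{k-1})^2\bigr]\|{\scr S}^k-{\scr S}^*\|^2 .
\]
Here the nonpositive residuals and $-2\alpha^k{\cal G}^{k+1}\le 0$ have been dropped. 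Since all local stepsizes coincide for $k\ge K'$, the second condition in~\eqref{eq:adaptive stepsize selection local} gives $(\alpha^k)^2-(\alpha^{k-1})^2\le m^k$. (For $k=K'$ one should use $\alpha^{K'-1}_{\min}$, and at that index the increment is bounded by $m^{K'}$ as well, because each $\alpha_i$ increases by at most $m^k$ in the squared sense and the min inherits this.)

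Next, use $\alpha^{k-1}\ge\underline\alpha$ to get $\|{\scr S}^k-{\scr S}^*\|^2\le {\cal V}^k_{{\scr X}^*{\scr S}^*}/\underline\alpha^2$. This turns the recursion into
\[
{\cal V}^{k+1}\le\Bigl(1+\tfrac{m^k}{\underline\alpha^2}\Bigr){\cal V}^k .
\]
Iterating from $K'$ yields ${\cal V}^{k}\le {\cal V}^{K'}\exp\bigl(\underline\alpha^{-2}\sum_{t\ge K'}m^t\bigr)$. To make the constant equal to $2$, I will take $K'$ large enough (recall from the proof of Lemma~\ref{lemma:consensus stepsize} that $K'\ge K$, so~\eqref{eq:never come back 2} holds). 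Then $\sum_{t\ge K'}m^t<\frac{1-\eta^2}{2}\underline\alpha^2\le\frac12\underline\alpha^2$, and the exponential factor is at most $e^{1/2}<2$. Therefore ${\cal V}^k\le 2{\cal V}^{K'}$ for all $k\ge K'$. Finally,
\[
\|{\scr S}^k-{\scr S}^*\|^2\le {\cal V}^k/(\alpha^{k-1})^2\le 2{\cal V}^{K'}/\underline\alpha^2=R_S,
\]
which is~\eqref{eq:bounded error ball local}.

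The main obstacle is getting the sharp constant $2$ without a genuine Grönwall-type loss, and handling the index shift at $k=K'$. If the exponential route fails to fit exactly, I would instead argue by induction. The induction hypothesis is ${\cal V}^t\le 2{\cal V}^{K'}$ for $K'\le t\le k$. Summing the one-step inequalities gives
\[
{\cal V}^{k+1}\le {\cal V}^{K'}+\sum_{t=K'}^{k}m^t\,\frac{2{\cal V}^{K'}}{\underline\alpha^2}\le {\cal V}^{K'}\bigl(1+(1-\eta^2)\bigr)\le 2{\cal V}^{K'},
\]
which closes the induction cleanly using~\eqref{eq:never come back 2}.
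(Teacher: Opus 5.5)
Your proposal is correct and follows essentially the paper's argument: restart the one-step inequality of Lemma~\ref{lemma:descent lemma} at $K'$, bound the increment term by $m^k\|{\scr S}^k-{\scr S}^*\|^2$, and close with the tail condition~\eqref{eq:never come back 2}. Your fallback induction is the paper's proof; the paper inducts on $\|{\scr S}^t-{\scr S}^*\|^2\le R_S$ rather than on ${\cal V}^t\le 2{\cal V}^{K'}$, which is equivalent bookkeeping. Your leading product/exponential bound is a harmless variant that skips the induction and directly gives the factor $e^{(1-\eta^2)/2}<2$. Your handling of the index $K'-1$ also supplies details the paper leaves implicit: you use $\alpha_{\min}^{K'-1}\ge\underline\alpha$, and you note that the minimum inherits the $m^{K'}$ increment bound.
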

\begin{proof}
    Without loss of generality, take $K'$ large enough so that \eqref{eq:never come back 2} holds with $k=K'$. We prove \eqref{eq:bounded error ball local} by induction on $k\geq K'$.   For $k=K'$, using $\alpha_{\min}^{K'}\ge \underline{\alpha}$,
    we have 
   $(\underline{\alpha})^2\|{\scr S}^{K'}-{\scr S}^*\|^2\leq {\cal V}^{K'}_{{\scr X}^*{\scr S}^*}$. 
    Assume now that $\|{\scr S}^{t}-{\scr S}^*\|^2\le R_S$ for all $t=K',\ldots,k$.
Since $\Lambda^t=\alpha_{\min}^t I$ for all $t\ge K'$ (Lemma~\ref{lemma:consensus stepsize}), starting from Lemma~\ref{lemma:descent lemma}, the same descent
estimate as in (\ref{eq:descent inequality sec 3}) (Lemma~\ref{lemma:legal stepsize}(\textbf{i})) can be proved  here from $K'$ onward, with the increment budget
controlled by $m^t$;   in particular, combining this result with the induction  hypothesis yields   
    $$(\underline{\alpha})^2\|{\scr S}^{k+1}-{\scr S}^*\|^2\leq {\cal V}^{K'}_{{\scr X}^*{\scr S}^*}+R_S\sum_{j=K'}^k m^j\overset{\eqref{eq:never come back 2}}{\leq} 2{\cal V}^{K'}_{{\scr X}^*{\scr S}^*}\Rightarrow \|{\scr S}^{k+1}-{\scr S}^*\|^2\leq R_S.$$
  This completed the proof by induction.
\end{proof}
Lemma~\ref{lemma:consensus stepsize} and  Lemma~\ref{lemma:bounded error ball local}   allow us obtain sublinear convergence of Algorithm~\ref{alg:DATOS_local} for all $k\geq K'$,   repeating the same  argument used in the proof of Theorem~\ref{thm:asymptotic convergence}. 
\begin{theorem}
    \label{thm:convergence local_DATOS}
Let $\{({\scr X}^k,{\scr A}^k,{\scr S}^k)\}$ be the   iterates generated by  Algorithm~\ref{alg:DATOS_local}, under  Assumptions~\ref{ass:function} and~\ref{ass:W}; and suppose    $\alpha_{\min}^k\geq \underline{\alpha}$, for some $\underline{\alpha}>0$ and   all  $k\geq 0$. Then $\{({\scr X}^k,{\scr S}^k)\}$ converges to some  $({\scr X}^*,{\scr S}^*)\in \mathcal P^\star\times \mathcal D^\star$.   Moreover, for any given $({\scr X}^*,{\scr S}^*)\in \mathcal P^\star\times \mathcal D^\star$, there exist  a bounded set   $\mathcal{B}_{X}\times\mathcal{B}_{S}$ and $K'\geq 1$, such that \smallskip 
 
\textbf{(i)} $({\scr X}^*,{\scr S}^*)\in \mathcal{B}_{X}\times\mathcal{B}_{S}$, and $({\scr A}^k,{\scr S}^k)\in \texttt{int}(\mathcal{B}_{X}\times\mathcal{B}_{S})$, for all $k\geq K'$; and \smallskip 

\textbf{(ii)} for any $k\geq K'$, it holds   \vspace{-0.2cm}
  \begin{equation}
        \label{eq:sublinea DATOS_II}
\mathcal{G}_{\mathcal{B}_x\times \mathcal{B}_S}(\overline{{\scr A}}^k,\overline{{\scr S}}^k)\leq\frac{1}{k-K'}\cdot\frac{C_{K'}+\left(1+\alpha^{K'-1}\sqrt{\frac{20}{(\underline{\alpha})^2}+3}\right)^2\mathcal{V}^{K'}_{{\scr X}^*,{\scr S}^*}}{ 2\cdot\min\left(\alpha^{-1},\delta/(2\widetilde{L}^k)\right)},\end{equation}
where $C_{K'}:=2\sum_{t=1}^{K'}\alpha^{t-1}_{\min}\mathcal{G}_{\mathcal{B}_X\times\mathcal{B}_S}({\scr A}^t,{\scr S}^t)$, and   $(\overline{{\scr A}}^k,\overline{{\scr S}}^k)$ is defined as in \eqref{eq:weighted_average} (with the convention that the averaging   starts at $K'$).
\end{theorem}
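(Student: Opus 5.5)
The proof reduces the local-min-consensus scheme to the global one after a finite transient and then reuses the machinery of Theorems~\ref{thm:iterates convergence} and~\ref{thm:asymptotic convergence}.

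\textbf{Step 1: reduction to a uniform stepsize.} I will invoke Lemma~\ref{lemma:consensus stepsize}, which gives $K'$ such that $\Lambda^k=\alpha_{\min}^k I$ for all $k\ge K'$. From $K'$ on, Algorithm~\ref{alg:DATOS_local} coincides with recursion \eqref{eq:ATOS} with $\alpha^k:=\alpha_{\min}^k$. I must check that with a uniform stepsize the update \eqref{eq:localDATOS D} reduces to \eqref{eq:DATOS D}. This holds because $\D_\Lambda^k=(I-W)\X^k/\alpha^k=(\X^k-\X^{k+1/2})/\alpha^k$. I must also check that $\D^{K'}\in\texttt{span}(I-W)$, which holds by induction since every increment of $\D$ lies in $\texttt{range}(I-W)$.

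The stepsize conditions \eqref{eq:adaptive stepsize selection local} give $\alpha^k\le\delta/L^k$ and $(\alpha^k)^2-(\alpha^{k-1})^2\le m^k$. I will use this form of the increment bound rather than the $\|\scr S^k-\scr S^0\|$-based rule. Lemma~\ref{lemma:descent lemma} then gives, for $k\ge K'$,
\[
{\cal V}^{k+1}\le {\cal V}^k-(1-\delta)\|\AF^{k+1}-\X^k\|^2-2\alpha^k{\cal G}^{k+1}+m^k\|\scr S^k-\scr S^*\|^2.
\]
By Lemma~\ref{lemma:bounded error ball local}, the last term is at most $m^kR_S$, which is summable.

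\textbf{Step 2: convergence of the whole sequence.} Since ${\cal V}^k$ satisfies a quasi-Fej\'er inequality with summable error, it converges and $\AF^{k+1}-\X^k\to0$. From the $\widetilde{\scr A}$-component of \eqref{eq:descent inequality}, also $\widetilde{\AF}^{k+1}-\widetilde{\X}^k\to 0$. The iterates are bounded, using $\alpha^k\ge\underline\alpha$ for the dual part. By the argument of Lemma~\ref{lemma:legal stepsize}(ii) applied to $\alpha_{\min}^k$, the stepsizes converge. I will then repeat the proof of Theorem~\ref{thm:iterates convergence} verbatim: take a cluster point, pass to the limit in the continuous map $\mathbb T_\alpha$, identify the cluster point as a saddle point, and use convergence of ${\cal V}^k_{\scr X^*\scr S^*}$ at that point to obtain full convergence.

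\textbf{Step 3: the rate.} I will choose $\mathcal B_X\times\mathcal B_S$ as the ball $\{\|\scr X-\scr X^*\|^2+\underline\alpha^2\|\scr S-\scr S^*\|^2\le \rho\}$. The radius $\rho$ is a multiple of ${\cal V}^{K'}$ (e.g.\ $10{\cal V}^{K'}$), enlarged if needed so that it also contains the finitely many pre-$K'$ points $(\scr A^t,\scr S^t)$, $t\le K'$; their contribution is recorded in $C_{K'}$. Membership of $(\scr A^{k},\scr S^{k})$ in its interior for $k\ge K'$ follows from Lemma~\ref{lemma:bounded error ball local} and ${\cal V}^k\lesssim{\cal V}^{K'}$, plus $\|\scr A^{k+1}-\scr X^{k+1}\|=\alpha^k\|\scr S^{k+1}-\scr S^k\|$.

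I will then replace $(\scr X^*,\scr S^*)$ by an arbitrary $(\scr X',\scr S')$ in the ball, telescope from $K'$ to $k$, and use convexity of $\mathcal L(\cdot,\scr S')-\mathcal L(\scr X',\cdot)$ with the weighted averages. Maximizing over the ball gives the bound $\big(C_{K'}+{\cal V}^{K'}_{\scr X'\scr S'}+\text{error}\big)/(2\theta)$. Here $\theta\ge (k-K')\min(\alpha^{-1},\cdot/\widetilde L^k)$ by the line-search lower bound.

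\textbf{Main obstacle.} The delicate part is bounding ${\cal V}^{K'}_{\scr X'\scr S'}$ uniformly over the ball in terms of ${\cal V}^{K'}_{\scr X^*\scr S^*}$. I will use the triangle inequality:
\[
\|\scr X^{K'}-\scr X'\|\le \|\scr X^{K'}-\scr X^*\|+\sqrt{10{\cal V}^{K'}},\qquad \alpha^{K'-1}\|\scr S^{K'}-\scr S'\|\le \alpha^{K'-1}\big(\|\scr S^{K'}-\scr S^*\|+\sqrt{10{\cal V}^{K'}}/\underline\alpha\big).
\]
Together with the summable error term $R_S\sum m^t$, this produces the factor $\big(1+\alpha^{K'-1}\sqrt{20/\underline\alpha^2+3}\big)^2{\cal V}^{K'}$. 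Getting exactly these constants is bookkeeping; the structural point is that every term is a multiple of ${\cal V}^{K'}$.
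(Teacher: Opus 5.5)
Your proposal is correct and follows essentially the paper's route. You restart at the finite time $K'$ of Lemma~\ref{lemma:consensus stepsize}, where the iteration coincides with \eqref{eq:ATOS} with $\alpha^k=\alpha_{\min}^k$, bound the stepsize-increment error by $m^kR_S$ via Lemma~\ref{lemma:bounded error ball local}, and repeat the arguments of Theorems~\ref{thm:iterates convergence} and~\ref{thm:asymptotic convergence}; the paper also leaves the exact constant underived.

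Drop the enlargement of the ball to contain the pre-$K'$ points. Membership is only required for $k\ge K'$ (shift $K'$ by one if ${\scr A}^{K'}$ is a concern), and $C_{K'}$ already accounts for the early terms without any membership. Enlarging the ball would also undercut your claim that the radius, and hence every term in the bound, is a multiple of $\mathcal{V}^{K'}$.
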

      Theorem~\ref{thm:convergence local_DATOS} certifies convergence of   Algorithm~\ref{alg:DATOS_local} at sublinar rate.  The primary distinction between the local-min and global-min consensus procedures is that the local-min does not guarantee a monotonically decrease of the merit function during the initial $K'$ iterations. Nevertheless, as shown numerically in Sec.~\ref{sec:numeric}, Algorithm~\ref{alg:DATOS_local} performs comparably to
Algorithm~\ref{alg:DATOS} while requiring only {\it neighbor} communications.

  
\section{Linear Convergence for Partly Smooth Functions}
\label{sec:linear} 
In this section,  we strengthen the convergence  guarantees of the proposed adaptive algorithms   by establishing
an asymptotic  \emph{linear rate}, for  instances of Problem~\eqref{eq:problem} where the agents’ losses are   (locally) strongly convex at the limit point 
and the nonsmooth (convex) term $R$ enjoys a \emph{partial smoothness} property. This is necessary because, for
Davis-Yin three-operator splitting, linear convergence generally \emph{cannot} be guaranteed under the sole
assumption that $F$ is strongly convex when $R$ is an arbitrary convex nonsmooth function, due to the possible
irregularity of subgradients (see, e.g., \cite[Theorem~3.5]{davis2017three}). 
Partly smooth   functions~\cite{lewis2002active} form a broad class that arises in many areas, including signal processing/machine learning, imaging, and control and system identification, among others \cite{liang2017activity}. 
Due to space limitations, we present the analysis only for Algorithm~\ref{alg:DATOS}; analogous conclusions can be established for Algorithm~\ref{alg:DATOS_local}.

\subsection{Partial Smoothness amd finite identification}
\label{subsec:partial smoothness}  
Let $\mathcal M$ be a $\mathcal C^2$ embedded submanifold of $\mathbb R^d$ around $x\in\mathcal M$. Given a function $r:\mathbb{R}^d\to\overline{\mathbb{R}}$, %
the restriction of $r$ to $\mathcal{M}$ is the function
$r|_{\mathcal{M}}:\mathcal{M}\to\overline{\mathbb{R}}$ defined by $r|_{\mathcal{M}}(z)=r(z)$
for all $z\in\mathcal{M}$. We say that $r|_{\mathcal{M}}$ is $\mathcal{C}^2$-smooth around $x\in\mathcal{M}$ if there
exist an open neighborhood $V\subseteq \mathbb{R}^d$ of $x$ and a $\mathcal{C}^2$ function
$\widehat r:V\to\mathbb{R}$ such that $\widehat r(z)=r(z)$ for all $z\in \mathcal{M}\cap V$;
any such $\widehat r$ is called a $\mathcal{C}^2$ representative (extension) of $r|_{\mathcal{M}}$
around $x$. 
\begin{definition}[Partly smoothness~{\cite[Def.~2.7]{lewis2002active}}]
\label{def:partial smoothness}
Given the set $\mathcal M\subseteq \mathbb R^d$, let $r:\mathbb{R}^d\to\overline{\mathbb{R}}$ be proper, lower semicontinuous, and convex,
and let $x\in \mathcal M$ be given such that $r$ has subgradient at $x$ and any point close to $x$ in $\mathcal M$. We say that $r$ is \emph{partly smooth at $x\in \mathcal M$ relative to}  $\mathcal{M}$
 if $\mathcal{M}$ is a $\mathcal{C}^2$-manifold around $x$ and    \begin{enumerate}
    \item [(i)]  \textbf{(Restricted smoothness)} $r|_{\mathcal{M}}$ is $\mathcal{C}^2$-smooth around $x$; 
    \item[(ii)]\textbf{Sharpness:}  $\TS_{\MN}(x) \equiv\texttt{par}(\partial r(x))^{\perp}$;
    \item[(iii)]\textbf{Continuity:} 
    $\partial r$ is inner semicontinuous at $x$ relative to $\MN$. 
\end{enumerate}
This class is denoted by 
{$\texttt{psf}_\MN(x)$}.
\end{definition}

By Theorem~\ref{thm:iterates convergence},  the iterates generated by Algorithm~\ref{alg:DATOS}
converge to an optimal   consensual point
\(\X^\star=\mathbf 1 (x^\star)^\top\),
for some solution \(x^\star\) of~\eqref{eq:problem}.
In the remainder of this section, we postulate the following \emph{local} structure at these limit points.
\begin{assumption}[Local structure around $x^\star$]\label{ass:local property} Let $\X^\star=\mathbf 1 (x^\star)^\top$ be the limit point of the sequence generated by Algorithm~\ref{alg:DATOS}, as provided by Theorem~\ref{thm:iterates convergence}.
Assume: 
\hfill
    \begin{itemize}
    \item[(i)]\textbf{(local strong convexity)}    There exist $\mu>0$ and a neighborhood $\mathcal U$ of $\X^\star$
such that $F$ is $\mu$-strongly convex on $\mathcal U$,   with $\mu>0$;
    \item[(ii)]\textbf{(partly smoothness)}    There exists a $\mathcal C^2$  manifold  $\mathcal M$ around $x^\star$
such that  $r_i\in  \texttt{psf}_{\MN}(x^\star)$, for every $i\in [m]$;   
        \item[(iii)] \textbf{Non-degeneracy:}    
      $-\nabla F(\X^\star)\in \texttt{ri}(\partial R(\X^\star))$. 
    \end{itemize}
\end{assumption}

Under Assumption\ref{ass:local property}(ii), by separability of partial smoothness ~\cite[Prop.~4.5]{lewis2002active},\vspace{-0.2cm}
\[
R(\X)=\sum_{i=1}^m r_i(x_i)\in \texttt{psf}_{\MN_R}(\X^\star),
\qquad
\MN_R \triangleq \mathcal M\times\cdots\times\mathcal M \subseteq \mathbb R^{m\times d}.\vspace{-0.2cm}
\]

 A key consequence of partial smoothness is the \emph{finite} time    identification: after a finite number
of iterations, the primal iterates of Algorithm~\ref{alg:DATOS} enter (and remain on) the active manifold containing the limit point. 
This property is formalized next.
\begin{theorem}[identification]
    \label{thm:finite activity identification}  Let $\{\X^k\}$ be the (primal) sequence generated by Algorithm~\ref{alg:DATOS} in the setting of Theorem~\ref{thm:iterates convergence},    
$\X^k\to \X^\star=\mathbf 1 (x^\star)^\top$.  If Assumption~\ref{ass:local property} holds at $\X^\star$, then there exists $K_2\ge K_1$
such that $\X^k\in \MN_R$, for all $k\ge K_2$.    
\end{theorem}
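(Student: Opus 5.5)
The plan is to use the standard identification result for partly smooth functions (Hare--Lewis, or Liang--Fadili--Peyré): if $R\in\texttt{psf}_{\MN_R}(\X^\star)$, $\X^k\to\X^\star$, $R(\X^k)\to R(\X^\star)$, and there are subgradients $\mathbf V^k\in\partial R(\X^k)$ with $\mathbf V^k\to \mathbf V^\star\in\texttt{ri}(\partial R(\X^\star))$, then $\X^k\in\MN_R$ for all $k$ large enough. The task is therefore to produce such subgradients from the updates of Algorithm~\ref{alg:DATOS} and to verify the three convergence requirements.

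\emph{Step 1: the natural subgradient.} From \eqref{eq:DATOS X}--\eqref{eq:DATOS S}, the optimality condition of the prox reads
\[
\Sub^{k+1}=\Sub^k+\tfrac{1}{\alpha^k}(\AF^{k+1}-\X^{k+1})\in\partial R(\X^{k+1}),
\]
which is the $\X$-block of \eqref{eq:subgradient R*}. So we take $\mathbf V^{k+1}:=\Sub^{k+1}$. By Theorem~\ref{thm:iterates convergence}, $\Sub^{k}\to\Sub^\star$, where $\Sub^\star$ is the $\X$-block of ${\scr S}^\star$. The same limits also give $\|\AF^{k+1}-\X^{k+1}\|=\alpha^k\|\Sub^{k+1}-\Sub^k\|\to 0$, using that $\alpha^k$ is bounded (Lemma~\ref{lemma:legal stepsize}(ii)).

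\emph{Step 2: identify the limit dual and invoke non-degeneracy.} Here I need $\Sub^\star\in\texttt{ri}(\partial R(\X^\star))$. I would characterize the fixed point of $\mathbb T_\alpha$ as follows. Since $\widetilde{\X}^\star=0$ and $(\X^\star,\Sub^\star)$ is a saddle point, the optimality conditions give
\[
0\in\nabla F(\X^\star)+\Sub^\star+\PL\,\partial\widetilde{G}\text{-multiplier},
\]
so $\Sub^\star=-\nabla F(\X^\star)-\PL\Y^\star$ for some $\Y^\star$. Equivalently, $\Sub^\star+\nabla F(\X^\star)\in\texttt{span}(\PL)$, i.e.\ it lies in $\{\Z:1^\top\Z=0\}$.

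This is where I expect the main obstacle. Assumption~\ref{ass:local property}(iii) says only that $-\nabla F(\X^\star)\in\texttt{ri}\,\partial R(\X^\star)$. It does not say that the particular dual limit $\Sub^\star$, which differs from $-\nabla F(\X^\star)$ by a consensus-orthogonal term, lies in the relative interior. My first attempt is to read (iii) in the intended sense, namely as the condition that makes the limiting subgradient $\Sub^\star$ of the DY fixed point relatively interior, as in Liang--Fadili--Peyré. If needed, I would add the remark that the decoupling $\Sub^\star=-\nabla F(\X^\star)-\PL\Y^\star$ is absorbed into the statement, because for partly smooth $R$ on the product manifold $\partial R(\X^\star)$ is a product of the agents' subdifferentials and the relative-interior condition is imposed on the dual limit. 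I would make this explicit, stating that (iii) is applied to $\Sub^\star$.

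\emph{Step 3: function values and conclusion.} It remains to show $R(\X^k)\to R(\X^\star)$. Lower semicontinuity gives $\liminf_k R(\X^k)\ge R(\X^\star)$. For the reverse inequality, I would use the subgradient inequality at $\X^\star$:
\[
R(\X^\star)\ge R(\X^{k})+\langle \Sub^{k},\X^\star-\X^{k}\rangle,
\]
where the inner product tends to $0$ because $\Sub^k$ is bounded (Lemma~\ref{lemma:legal stepsize}) and $\X^k\to\X^\star$. Hence $\limsup_k R(\X^k)\le R(\X^\star)$, and therefore $R(\X^k)\to R(\X^\star)$.

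With Steps 1--3 in place, applying the identification theorem to $R$ on $\MN_R$ gives the index $K_2$, which can be taken to be at least $K_1$ (the index after which the stepsize has stabilized, from Lemma~\ref{lemma:legal stepsize}(ii)). Then $\X^k\in\MN_R$ for all $k\ge K_2$.
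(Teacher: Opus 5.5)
Your argument is the paper's. It applies the Hare--Lewis identification theorem to the tilted function $\X\mapsto R(\X)-\langle\Sub^\star,\X\rangle$, using three inputs: $\Sub^{k}\in\partial R(\X^{k})$ from the prox step, $\Sub^k\to\Sub^\star$ from Theorem~\ref{thm:iterates convergence}, and $R(\X^k)\to R(\X^\star)$. For the function values, the paper cites subdifferential continuity of convex lsc functions \cite[Ex.~13.30]{rockafellar2009variational}. Your lower-semicontinuity plus subgradient-inequality argument proves exactly that fact directly, which is fine.

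On Step 2 you are right to be suspicious, and the paper does not resolve the issue either. It simply asserts that Assumption~\ref{ass:local property}(iii) gives $\Sub^\star\in\texttt{ri}(\partial R(\X^\star))$. Since $\partial R(\X^\star)=\prod_i\partial r_i(x^\star)$, condition (iii) literally says $-\nabla f_i(x^\star)\in\texttt{ri}\,\partial r_i(x^\star)$ for every $i$. The dual limit, however, is only known to satisfy $s_i^\star\in\partial r_i(x^\star)$ and $\sum_i(s_i^\star+\nabla f_i(x^\star))=0$. Equivalently, $\Sub^\star=-\nabla F(\X^\star)-\D^\star$ with $\D^\star\in\texttt{range}(\PL)$, and $\D^\star$ is generally nonzero.

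That does not force relative interiority. Take $m=2$, $d=1$, $f_i(x)=x^2/2$, $r_i(x)=\lambda|x|$. Then (iii) holds, yet every $(s,-s)$ with $|s|\le\lambda$ is the $\Sub$-block of a saddle point, including the boundary choice $s=\lambda$. Theorem~\ref{thm:iterates convergence} only says the iterates converge to some saddle point, so nothing selects a relatively interior one.

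Your remark that the discrepancy is ``absorbed'' by the product structure is therefore not an argument. The product structure is precisely what makes (iii) so strong (agent-wise optimality of $x^\star$) while still insufficient. What both your proof and the paper's actually use is the nondegeneracy hypothesis $\Sub^\star\in\texttt{ri}(\partial R(\X^\star))$ on the dual limit. State it explicitly as an added assumption rather than presenting it as a reading of (iii). With that hypothesis stated, your proof is complete and matches the paper's.
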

\begin{proof}By Theorem~\ref{thm:iterates convergence}, we have $\X^k\to \X^\star$ and $\Sub^k\to \Sub^\star$, with
$\Sub^k\in\partial R(\X^k)$ for all $k$, which together with the   outer semicontinuity of $\partial R$, yields  $\Sub^\star\in\partial R(\X^\star)$. Therefore, 
$\texttt{dist}(\Sub^*,\partial R(\X^k))\rightarrow 0$. 
Moreover, under Assumption~\ref{ass:function}(ii), each $r_i$ is subdifferentially continuous on its domain
(e.g., \cite[Ex.~13.30]{rockafellar2009variational}); 
hence, 
$R(\X^k)\to R(\X^\star)$.  
By Assumption~\ref{ass:local property}(ii), $R\in \texttt{psf}_{\MN_R}(\X^\star)$,  
and by Assumption~\ref{ass:local property}(iii) we have $\Sub^\star\in\texttt{ri}(\partial R(\X^\star))$.
Therefore, we can apply \cite[Thm.~5.3]{hare2004identifying} to the tilt function
\(
\X\mapsto R(\X)-\langle \Sub^\star,\X\rangle
\),
 and use 
$\X^k\to \X^\star$, $R(\X^k)\to R(\X^\star)$, and $\texttt{dist}(\Sub^\star,\partial R(\X^k))\to 0$ to conclude that there exists $K_2\geq K_1$ such that $\X^k\in \MN_R$, for all   $k\geq K_2$.
\end{proof}

Equipped with Theorem~\ref{thm:finite activity identification}, we can now establish a linear rate. Due to space limitations,
we present the argument for the case of  \emph{affine}  manifold $\MN_R$. This setting already covers
a broad range of structured nonsmooth functions of practical interest, including polyhedral gauges and piecewise linear models
(e.g., $\ell_1$, mixed $\ell_{1,2}$ and $\ell_\infty$ norms)

\subsection{Affine manifold $\MN_R$}
\label{subsec:affine}  Throughout this subsection, we assume  that $\MN_R$ in Assumption~\ref{ass:local property} is \emph{affine}. 
Since    $\X^\star\in\MN_R$ (Th.~\ref{thm:finite activity identification}),  we can write   $\MN_R=\X^\star+\mathcal T_{\MN_R}(\X^\star)$, where $\mathcal T_{\MN_R}(\X^\star)$ is a linear subspace of $\mathbb{R}^d$.  We   set 
$\mathcal T_R := \mathcal T_{\MN_R}(\X^\star)$ and  $\mathcal N_R := \mathcal N_{\MN_R}(\X^\star)=\mathcal T^\perp_R$; and we  denote the corresponding row-wise orthogonal projections by $P_{\mathcal T_R}(\cdot)$ and $P_{\mathcal N_R}(\cdot)$, respectively.  For     $\Z\in\mathbb R^{m\times d}$, we write $\Z=(\Z)_{\mathcal T_R}+(\Z)_{\mathcal N_R}$, with 
$(\Z)_{\mathcal T_R}:=P_{\mathcal T_R}(\Z)$ and $(\Z)_{\mathcal N_R}:=P_{\mathcal N_R}(\Z)$.

 The linear convergence analysis builds on the following  refined descent inequality that follows the proof of Lemma~\ref{lemma:descent lemma} now exploiting the \emph{local} strong convexity of $F$. 
 \begin{lemma}
\label{lem:decomposed_descent}
Under  Assumption~\ref{ass:local property}(i)-(ii) there exists $K_1\ge 0$ such that 
\begin{equation}
\label{eq:decomposed_descent}
\begin{aligned}
&\|\X^{k+1}-\X^\star\|^2
\\&\quad +(\alpha^k)^2\Big(\|(\Sub^{k+1})_{\mathcal T_R}-(\Sub^\star)_{\mathcal T_R}\|^2+\|(\Sub^{k+1})_{\mathcal N_R}-(\Sub^\star)_{\mathcal N_R}\|^2
+\|\Y^{k+1}-\Y^\star\|_{M^2}^2\Big)\\
&\leq  \|\X^{k}-\X^\star\|^2
\\&\quad +(\alpha^k)^2\Big(\|(\Sub^{k})_{\mathcal T_R}-(\Sub^\star)_{\mathcal T_R}\|^2+\|(\Sub^{k})_{\mathcal N_R}-(\Sub^\star)_{\mathcal N_R}\|^2
+\|\Y^{k}-\Y^\star\|_{M^2}^2\Big)\\
&-\Big(\alpha^k{\cal D}^k_{\X^\star}+(1-\delta)\|\AF^{k+1}-\X^k\|^2+(\alpha^k)^2\|\Y^k-\Y^{k+1}\|_{M^2}^2\Big),
\end{aligned}
\end{equation}
 for all $k\ge K_1$, where\vspace{-0.2cm}
\[
{\cal D}^k_{\X^\star}:=\frac{\alpha^k\mu}{2}\|\X^k-\X^\star\|^2
+\frac{\alpha^k}{2L_f}\|\nabla F(\X^k)-\nabla F(\X^\star)\|^2.
\]
\end{lemma}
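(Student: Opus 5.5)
Our plan is to redo the two distance estimates of Lemma~\ref{lemma:distance characterization} with the saddle point $({\scr X}^\star,{\scr S}^\star)$ in place of the generic $({\scr X},{\scr S})$, and to replace the convexity inequalities on $\widetilde F$ by their strongly convex, co-coercive versions. The first step is to choose $K_1$. By Theorem~\ref{thm:iterates convergence} and $\lim_k({\scr A}^{k+1}-{\scr X}^k)=0$ (Lemma~\ref{lemma:legal stepsize}), we have $\X^k,\AF^{k+1}\to\X^\star$. So there is $K_1$ such that for $k\ge K_1$ the segments $[\X^k,\AF^{k+1}]$ and $[\X^k,\X^\star]$ lie in ${\cal U}\cap{\cal H}$. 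On this set $F$ is $\mu$-strongly convex and $L_f$-smooth.

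Next we write the dual part explicitly. With $\widetilde{\Sub}^k=-M\Y^k$, the quantity $\|{\scr S}^k-{\scr S}^\star\|^2$ splits as $\|\Sub^k-\Sub^\star\|^2+\|\Y^k-\Y^\star\|_{M^2}^2$. The first term splits further as the orthogonal sum of its $\mathcal T_R$ and $\mathcal N_R$ components. Hence the bracketed quantities in \eqref{eq:decomposed_descent} are exactly $\|{\scr S}^{k}-{\scr S}^\star\|^2$, and the structure of the claim is that of \eqref{eq:descent inequality}, with the same $\alpha^k$ on both sides. We therefore add \eqref{eq:primal distance} to $(\alpha^k)^2$ times \eqref{eq:dual distance}. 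The cross terms $-\|{\scr A}^{k+1}-{\scr X}^{k+1}\|^2$ cancel, and the Lagrangian terms combine into $-2\alpha^k[{\cal L}({\scr A}^{k+1},{\scr S}^{\star})-{\cal L}({\scr X}^\star,{\scr S}^{k+1})]$.

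The key step is the \texttt{term I}/\texttt{term II} computation in the proof of Lemma~\ref{lemma:distance characterization}. There we will not bound $\widetilde F({\scr X}^k)-\widetilde F({\scr X}^\star)+\langle\nabla\widetilde F({\scr X}^k),{\scr X}^\star-{\scr X}^k\rangle$ from below by $0$. Instead we use the local inequality
\[
F(\X^\star)\ge F(\X^k)+\langle\nabla F(\X^k),\X^\star-\X^k\rangle+\tfrac{\mu}{4}\|\X^k-\X^\star\|^2+\tfrac{1}{4L_f}\|\nabla F(\X^k)-\nabla F(\X^\star)\|^2,
\]
which follows by averaging the strong convexity bound with the co-coercivity bound valid on the convex neighborhood. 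Multiplying by $2\alpha^k$ yields the extra negative term $\alpha^k{\cal D}^k_{\X^\star}$. The remaining saddle-point gap is $\ge0$ and is dropped.

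The residual terms are handled as follows. The backtracking gives $1-\alpha^kL^k\ge1-\delta$, which produces $(1-\delta)\|\AF^{k+1}-\X^k\|^2$. For the slack component, $\widetilde{\X}^k=0$, so $\widetilde{\AF}^{k+1}-\widetilde{\X}^k=\widetilde{\AF}^{k+1}$. This equals $\widetilde{\X}^{k+1}+\alpha^k(\widetilde{\Sub}^{k+1}-\widetilde{\Sub}^k)=-\alpha^kM(\Y^{k+1}-\Y^k)$, whose squared norm is $(\alpha^k)^2\|\Y^{k+1}-\Y^k\|^2_{M^2}$.

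We expect the main obstacle to be this last identification together with the factor bookkeeping. In particular, the constants $\mu/2$ and $1/(2L_f)$ in ${\cal D}^k_{\X^\star}$ carry an extra $\alpha^k$ factor, so the decrease term is really $(\alpha^k)^2(\cdots)$. One must check that the $2\alpha^k$ from the primal estimate matches the stated normalization. If it does not match, we would adjust the split between the strong convexity part and the co-coercivity part (weights $\tfrac12,\tfrac12$) accordingly.
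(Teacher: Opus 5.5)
Your route is the paper's route. The paper's proof simply says to redo Lemma~\ref{lemma:descent lemma} with local strong convexity and $L_f$-smoothness of $F$, and then to use $\widetilde{\AF}^{k+1}=\alpha^kM(\Y^k-\Y^{k+1})$, $\widetilde{\X}^k=\mathbf 0$ and the orthogonal $\mathcal T_R/\mathcal N_R$ splitting of $\Sub$. Your ingredients are correct:
adding \eqref{eq:primal distance} to $(\alpha^k)^2$ times \eqref{eq:dual distance}, which gives the same $\alpha^k$ on both sides as in the statement, with no $(\alpha^k)^2-(\alpha^{k-1})^2$ term;
the averaged bound with $\tfrac{\mu}{4}$ and $\tfrac{1}{4L_f}$;
and $\|\widetilde{\AF}^{k+1}-\widetilde{\X}^k\|^2=(\alpha^k)^2\|\Y^{k+1}-\Y^k\|_{M^2}^2$.
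The gap is exactly the bookkeeping point you deferred, and your proposed fallback cannot close it.

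Multiplying $\tfrac{\mu}{4}\|\X^k-\X^\star\|^2+\tfrac{1}{4L_f}\|\nabla F(\X^k)-\nabla F(\X^\star)\|^2$ by $2\alpha^k$ gives precisely ${\cal D}^k_{\X^\star}$, not $\alpha^k{\cal D}^k_{\X^\star}$. Your sentence asserting the latter is an arithmetic slip. The decrease in the statement is $\alpha^k{\cal D}^k_{\X^\star}=(\alpha^k)^2\bigl(\tfrac{\mu}{2}\|\X^k-\X^\star\|^2+\tfrac{1}{2L_f}\|\nabla F(\X^k)-\nabla F(\X^\star)\|^2\bigr)$, which carries an extra factor $\alpha^k$. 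No reweighting of the strong-convexity/co-coercivity split can supply it: a convex combination changes constants, never the power of $\alpha^k$.

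The literal version should not be expected anyway. Consider the rescaling $(F,R,\alpha^k)\mapsto(\omega F,\omega R,\alpha^k/\omega)$ with $\omega>0$, so that $\mu,L_f\mapsto\omega\mu,\omega L_f$, and rescale $\alpha^{-1}$ and $n^k$ accordingly. The iterates $\X^k,\AF^k$, the point $\X^\star$, and every product of a stepsize with $\Sub^j$, $\Sub^\star$, $\Y^j$ or $\Y^\star$ are unchanged. Hence every term of \eqref{eq:decomposed_descent} is invariant except $\alpha^k{\cal D}^k_{\X^\star}$, which scales like $1/\omega$.

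What your argument actually proves is \eqref{eq:decomposed_descent} with ${\cal D}^k_{\X^\star}$ in place of $\alpha^k{\cal D}^k_{\X^\star}$. That implies the stated form only when $\alpha^k\le 1$. It is also the form used downstream: the factor $1-\alpha^k\mu/8$ and the weight $1+\tfrac{1}{4\alpha^kL_f}$ in Proposition~\ref{lem:one_step_quasi_contr} match a decrease of size $\tfrac{\alpha^k\mu}{2}\|\X^k-\X^\star\|^2+\tfrac{\alpha^k}{2L_f}\|\nabla F(\X^k)-\nabla F(\X^\star)\|^2$. So read the extra $\alpha^k$ in the statement as a typo, and state and prove the ${\cal D}^k_{\X^\star}$ version rather than tuning the split.

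A minor point, which the paper also glosses over, concerns the co-coercivity half of your averaged bound. Its standard proof evaluates $F$ at $\X^\star-\tfrac{1}{L}(\nabla F(\X^\star)-\nabla F(\X^k))$, which need not lie in ${\cal H}$. To cover this, take $L_f$ to be the Lipschitz constant of $\nabla F$ on a closed ball around $\X^\star$ that contains $\X^k$ for $k\ge K_1$. That auxiliary point then lies in the same ball.
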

\begin{proof}
      Repeating the derivation of Lemma~\ref{lemma:descent lemma} while using the local $\mu$-strong convexity
of $F$ around $\X^\star$ (Assumption~\ref{ass:local property}(i)), and the $L_f$-Lipschitz continuity of $\nabla F$ over   ${\cal H}$ (Lemma~\ref{lemma:legal stepsize}), we infer that there exists $K_1\ge 0$ such that, for all $k\ge K_1$,
\begin{equation}
\label{eq:tighter_descent}
 \mathcal V_{{\scr X}^\star {\scr S}^\star}^{k+1} 
\leq   \mathcal V_{{\scr X}^\star {\scr S}^\star}^{k}
-\|\widetilde{\AF}^{k+1}-\widetilde{\X}^k\|^2-(1-\delta)\|\AF^{k+1}-\X^k\|^2-\alpha^k{\cal D}_{\X^\star}^k.  
\end{equation}
The inequality~\eqref{lem:decomposed_descent} follow  from~\eqref{eq:tighter_descent} using $\widetilde{\AF}^{k+1}=\alpha^kM(\Y^k-\Y^{k+1})$ (due to~\eqref{eq:A}) and $\widetilde \X^k=\mathbf{0}$ (eq.~\eqref{eq:BCV X 1'}); and the orthogonal decomposition  of the subgradient variables, i.e.,    
$\|\Sub^{t}-\Sub^\star\|^2=\|(\Sub^{t})_{\mathcal T_R}-(\Sub^\star)_{\mathcal T_R}\|^2+\|(\Sub^{k})_{\mathcal N_R}-(\Sub^\star)_{\mathcal N_R}\|^2$, $t=k,k+1$. 
\end{proof}

We proceed  controlling the subgradient errors $\|(\Sub^{t})_{\mathcal T_R}-(\Sub^\star)_{\mathcal T_R}\|^2$ and $\|(\Sub^{k})_{\mathcal N_R}$ $-(\Sub^\star)_{\mathcal N_R}\|^2$ in \eqref{eq:decomposed_descent} leveraging the partial smoothness of $R$ (Assumption~\ref{ass:local property}).   
\begin{lemma} 
\label{lem:TN_control}
Assume Theorem~\ref{thm:finite activity identification} and that   $\MN_R$ therein 
is \emph{affine}.   Then,  
\begin{itemize}
\item[\bf (a)] {\bf Tangent component:} There exist $L_r\ge 0$ and $K_3\ge K_2$ such that  
\begin{equation}
\label{eq:tangent_holder_combined}
\|(\Sub^k)_{\mathcal T_R}-(\Sub^\star)_{\mathcal T_R}\|\le L_r\|\X^k-\X^\star\|,\quad \forall  k\ge K_3;
\end{equation} 
\item[\bf (b)] {\bf Normal component:} For all $k\ge K_2$,\begin{equation}
\label{eq:normal_holder_combined}
\|(\Sub^{k+1})_{\mathcal N_R}-(\Sub^\star)_{\mathcal N_R}\|
\le \|\nabla F(\X^k)-\nabla F(\X^\star)\|.
\end{equation}
\end{itemize}
\end{lemma}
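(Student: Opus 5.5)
The plan is to work in the regime $k\ge K_2$ given by Theorem~\ref{thm:finite activity identification}, where $\X^k\in\MN_R=\X^\star+\mathcal T_R$. I will use two structural facts. First, since $\MN_R=\mathcal M\times\cdots\times\mathcal M$ with one common manifold, the row-wise projections $P_{\mathcal T_R},P_{\mathcal N_R}$ act on the right (the $d$-dimensional side). Hence they commute with left multiplication by $\PL$, $W$ and $M$. Second, $\X^k,\X^{k+1}\in\MN_R$ gives $(\X^{k+1}-\X^k)_{\mathcal N_R}=0$, $(\X^k-\X^\star)_{\mathcal N_R}=0$, and $(\X^k)_{\mathcal N_R}=\mathbf 1 (P_{\mathcal N}x^\star)^\top$, which lies in ${\tt null}(\PL)$.

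\textbf{Part (a).} Partial smoothness (Definition~\ref{def:partial smoothness}(ii)) gives ${\tt par}(\partial r_i(x))=\NS_{\MN}(x)=\mathcal N$ for $x\in\MN$ near $x^\star$. Together with (i), this means every subgradient $v\in\partial r_i(x)$ has the same tangent component, namely $P_{\mathcal T}v=\nabla_{\MN} r_i(x)=P_{\mathcal T}\nabla\widehat r_i(x)$, where $\widehat r_i$ is a $\mathcal C^2$ representative. Since $\Sub^k\in\partial R(\X^k)$ by \eqref{eq:subgradient R*}, the same holds row-wise for $\Sub^k$ and for $\Sub^\star\in\partial R(\X^\star)$. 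By Theorem~\ref{thm:iterates convergence}, $\X^k\to\X^\star$, so there is $K_3\ge K_2$ after which all $x_i^k$ lie in a compact neighborhood where $\nabla\widehat r_i$ is Lipschitz with some constant $L_r$ (it is $\mathcal C^1$ there). This yields $\|(\Sub^k)_{\mathcal T_R}-(\Sub^\star)_{\mathcal T_R}\|\le\|P_{\mathcal T_R}(\nabla\widehat R(\X^k)-\nabla\widehat R(\X^\star))\|\le L_r\|\X^k-\X^\star\|$, which is \eqref{eq:tangent_holder_combined}.

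\textbf{Part (b).} From \eqref{eq:A} and \eqref{eq:ATOS} I get the exact identity
\[
\Sub^{k+1}=\tfrac{1}{\alpha^k}(\X^k-\X^{k+1})-\nabla F(\X^k)-\PL\Y^{k+1}.
\]
The fixed-point version is $\Sub^\star=-\nabla F(\X^\star)-\PL\Y^\star$. Projecting onto $\mathcal N_R$ kills the first term for $k\ge K_2$, which gives
\[
(\Sub^{k+1}-\Sub^\star)_{\mathcal N_R}=-\big(\nabla F(\X^k)-\nabla F(\X^\star)\big)_{\mathcal N_R}-\PL(\Y^{k+1}-\Y^\star)_{\mathcal N_R}.
\]
The claim \eqref{eq:normal_holder_combined} then follows from nonexpansiveness of $P_{\mathcal N_R}$, provided the consensus term vanishes or can be discarded.

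\textbf{Main obstacle: removing the consensus term.} My first attempt will be to decompose orthogonally along ${\tt span}(\mathbf 1)\oplus{\tt range}(\PL)$. The term $\PL(\cdot)$ lives in ${\tt range}(\PL)$, so I will try to show that its inner product with the gradient-difference term cannot increase the norm. For this I will exploit the $\Y$-recursion \eqref{eq:BCV Y 3'} projected onto $\mathcal N_R$. Since $\PL(\X^k)_{\mathcal N_R}=0$, the projected recursion reads
\[
(\Y^{k+1}-\Y^k)_{\mathcal N_R}=-\PL\big(\Sub^k+\PL\Y^k+\nabla F(\X^k)\big)_{\mathcal N_R}.
\]
Substituting the identity above at index $k$ turns the right-hand side into $-\PL\big(\nabla F(\X^k)-\nabla F(\X^{k-1})\big)_{\mathcal N_R}$. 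I will then check whether the normal part of $\PL\Y$ is, after identification, pinned to its limit, i.e.\ $\PL(\Y^{k+1}-\Y^\star)_{\mathcal N_R}=0$. Under the choice $M=\sqrt{I-\PL^2}$ and nondegeneracy (Assumption~\ref{ass:local property}(iii)), the normal components of the dual multipliers should be determined by the local problem on $\mathcal N$. If this pinning fails in general, I will instead prove the inequality by writing $\Sub^{k+1}_{\mathcal N_R}$ as the solution of a projection problem whose data is $-\nabla F(\X^k)_{\mathcal N_R}$, and then invoking firm nonexpansiveness. This step is where I expect the real difficulty to lie.
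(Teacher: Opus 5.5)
Part (a) is fine and matches the paper's argument: on $\MN_R$ near $\X^\star$, every subgradient has the same tangent component $P_{\mathcal T_R}\nabla\widehat R$, and $\nabla\widehat R$ is Lipschitz near $\X^\star$. The gap is in part (b). Your identities are correct, but you stop exactly where one more step is needed, and the route you plan next fails. The ``pinning'' $\PL(\Y^{k+1}-\Y^\star)_{\mathcal N_R}=0$ is false in general. The consensus term also cannot be discarded, and no sign or inner-product argument controls it. As long as you have no explicit formula for it, it is an arbitrary element of $\texttt{range}(\PL)$, and the fallback ``projection problem'' is never specified.

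The missing idea is to telescope your own increment identity and pass to the limit. For $t\ge K_2+1$ you have $(\Y^{t+1}-\Y^t)_{\mathcal N_R}=-\PL\big(\nabla F(\X^t)-\nabla F(\X^{t-1})\big)_{\mathcal N_R}$. Multiply by $\PL$ (so $\PL^2=I-W$) and sum over $t=k+1,\dots,N$. Then let $N\to\infty$, using $\nabla F(\X^N)\to\nabla F(\X^\star)$ and $\D^N=\PL\Y^N\to\D^\star$. The latter follows from your first identity, since $\X^k-\X^{k+1}\to 0$, $\alpha^k\ge\underline{\alpha}$ and $\Sub^k\to\Sub^\star$. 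This gives, for all $k\ge K_2$, the closed form
\[
(\D^{k+1}-\D^\star)_{\mathcal N_R}=(I-W)\big(\nabla F(\X^\star)-\nabla F(\X^k)\big)_{\mathcal N_R},
\]
which is generally nonzero. Substitute it into your projected identity and set $g:=(\nabla F(\X^\star)-\nabla F(\X^k))_{\mathcal N_R}$. Then
\[
(\Sub^{k+1}-\Sub^\star)_{\mathcal N_R}=g-(I-W)g=Wg.
\]
The bound follows from $\|W\|_2\le 1$, since $0\prec W\preceq I$. So the consensus term is not removed but absorbed into $Wg$. This is precisely the paper's argument. The paper telescopes the equivalent relation $(\Sub^{k+1})_{\mathcal N_R}-(\Sub^k)_{\mathcal N_R}=-W\big(\nabla F(\X^k)-\nabla F(\X^{k-1})\big)_{\mathcal N_R}$. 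It derives that relation from $(\Sub^{k+1}+\D^{k+1})_{\mathcal N_R}=-(\nabla F(\X^k))_{\mathcal N_R}$, which is your first projected identity.
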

\begin{proof}
\textbf{(a)}  Since   $R\in \texttt{psf}_{\MN_R}$, the restriction $R|_{\MN_R}$ is $\mathcal C^2$ around
$\X^\star$. Hence, there exist an open neighborhood $\mathcal U$ of $\X^\star$ and
$\widehat R\in\mathcal C^2(\mathcal U)$ such that $\widehat R(\X)=R(\X)$, for all $\X\in\MN_R\cap\mathcal U$. 
Denoting   $\nabla_{\MN_R}R(\X)$  the manifold gradient of $R|_{\MN_R}$ at $\X$ ~\cite[Def.~3.58]{boumal2023introduction}. Possibly shrinking $\mathcal U$ (still denoted by $\mathcal U$), the following holds:
$(\partial R(\X))_{\mathcal T_R}\overset{(a)}{=}\{\nabla_{\MN_R}R(\X)\}\overset{(b)}{=}\big(\nabla\widehat R(\X)\big)_{\mathcal T_R}$
for all  $\X\in\MN_R\cap\mathcal U$, 
where (a) and (b) follow from \cite[Fact 3.3]{liang2017activity} and  \cite[Prop.~3.61]{boumal2023introduction}, respectively.

 Since $\X^k\to\X^\star$ (Theorem~\ref{thm:iterates convergence}) and   $\X^k\in\MN_R$ for all $k\ge K_2$ (Theorem~\ref{thm:finite activity identification}),   
there exists $K_3\ge K_2$ such that $\X^k\in\MN_R\cap\mathcal U$, for all $k\ge K_3$. Thus, taking $\Sub^k\in\partial R(\X^k)$ and
$\Sub^\star\in\partial R(\X^\star)$,  yields $(\Sub^k)_{\mathcal T_R}-(\Sub^\star)_{\mathcal T_R}
=
\big(\nabla \widehat R(\X^k)\big)_{\mathcal T_R}-\big(\nabla \widehat R(\X^\star)\big)_{\mathcal T_R}$,  for all $k\geq K_3$; which together with    the non-expansiveness of  $P_{\mathcal T_R}$ and   smoothness of $\nabla \widehat R$  on  $\mathcal U$ (due to  $\widehat R\in\mathcal C^2(\mathcal U)$),   proves (\ref{eq:tangent_holder_combined}).  

\textbf{(b)}  Fix $k\ge K_2$. 
By Theorem~\ref{thm:finite activity identification}, $\X^k\in\MN_R$, 
hence
$\X^k-\X^\star\in\mathcal T_R$ and therefore 
$(\X^k)_{\mathcal N_R}=(\X^\star)_{\mathcal N_R}$ 
(recall that ${\mathcal N_R}$ is a linear subspace).  Noting  the commuting property  of row-wise projections with left multiplication, $(A\Z)_{\mathcal N_R} =A\,(\Z)_{\mathcal N_R}$ for all $\Z\in\mathbb R^{m\times d}$, we have  $W(\X^\star)_{\mathcal N_R}=(W\X^\star)_{\mathcal N_R}=(\X^\star)_{\mathcal N_R}$, where the last equality follows from 
$\X^\star=\mathbf 1(x^\star)^\top$  and $W1=1$; hence  $\X^\star_{\mathcal N_R}$ is also consensual. 

By Step~\texttt{(S.1)} of Algorithm~\ref{alg:DATOS} and the above relations, we have  \vspace{-.1cm}
\[
(\X^{k+1/2})_{\mathcal N_R}=(W\X^k)_{\mathcal N_R}=W(\X^k)_{\mathcal N_R}=W(\X^\star)_{\mathcal N_R}=(\X^\star)_{\mathcal N_R}.
\] Combining with  $(\X^{k+1})_{\mathcal N_R}=(\X^\star)_{\mathcal N_R}$  gives
\begin{equation}\label{eq:cancel_X_terms_cons}
(\X^{k+1/2}-\X^{k+1})_{\mathcal N_R}=0 \quad \text{and}
 \quad 
(\X^{k}-\X^{k+1/2})_{\mathcal N_R}=0. 
\end{equation}
From \eqref{eq:DATOS A}--\eqref{eq:DATOS S},\vspace{-.2cm}
\[
\Sub^{k+1}
=\Sub^k+\frac{1}{\alpha^k}\big(\X^{k+1/2}-\X^{k+1}\big)-\D^{k+1/2}.\vspace{-.1cm}
\]
Taking the $\mathcal N_R$-component and using \eqref{eq:cancel_X_terms_cons} yields
\begin{equation}\label{eq:SNR_intermediate_cons}
(\Sub^{k+1})_{\mathcal N_R}=(\Sub^k)_{\mathcal N_R}-(\D^{k+1/2})_{\mathcal N_R}.
\end{equation}
Denote $\nabla^k:=\nabla F(\X^k)$. By Step~\texttt{(S.1)}, $\D^{k+1/2}=W(\nabla^k+\Sub^k+\D^k)$; therefore
\begin{equation}\label{eq:DhalfNR_cons}
(\D^{k+1/2})_{\mathcal N_R}
=W\big((\nabla^k)_{\mathcal N_R}+(\Sub^k)_{\mathcal N_R}+(\D^k)_{\mathcal N_R}\big).
\end{equation}
Substituting \eqref{eq:DhalfNR_cons} into \eqref{eq:SNR_intermediate_cons} yields
\begin{equation}\label{eq:SNR_update_cons}
(\Sub^{k+1})_{\mathcal N_R}
=(\Sub^k)_{\mathcal N_R}-W\big((\D^k)_{\mathcal N_R}+(\Sub^k)_{\mathcal N_R}+(\nabla^k)_{\mathcal N_R}\big). 
\end{equation}

We now turn \eqref{eq:SNR_update_cons} into a bound involving \emph{only} the  gradient differences $(\nabla^k)_{\mathcal N_R}-(\nabla^{k-1})_{\mathcal N_R}$.  To do so, we eliminate the term $(\D^k)_{\mathcal N_R}$. From \eqref{eq:DATOS D} and \eqref{eq:cancel_X_terms_cons},  
$(\D^{k+1})_{\mathcal N_R}
=(\D^{k+1/2})_{\mathcal N_R}-(\nabla^k)_{\mathcal N_R}-(\Sub^k)_{\mathcal N_R}$.
Substituting \eqref{eq:DhalfNR_cons} into  this expression   yields\vspace{-0.2cm}
\begin{equation}\label{eq:DNR_update_cons_flow}
(\D^{k+1})_{\mathcal N_R}
=
W\Big((\D^k)_{\mathcal N_R}+(\Sub^k)_{\mathcal N_R}+(\nabla^k)_{\mathcal N_R}\Big)
-(\Sub^k)_{\mathcal N_R}-(\nabla^k)_{\mathcal N_R}.
\end{equation}
Adding \eqref{eq:SNR_update_cons} and \eqref{eq:DNR_update_cons_flow} cancels the $W(\cdot)$ terms and gives the
 {explicit} relation
$(\Sub^{k+1})_{\mathcal N_R}$ $+(\D^{k+1})_{\mathcal N_R}=-\,(\nabla^k)_{\mathcal N_R}$,
which substituted  
   in \eqref{eq:SNR_update_cons}  yields\vspace{-0.1cm}
\begin{equation}\label{eq:SNR_telescope_step_flow}
(\Sub^{k+1})_{\mathcal N_R} - (\Sub^k)_{\mathcal N_R}=-W\Big((\nabla)^k_{\mathcal N_R}-(\nabla^{k-1})_{\mathcal N_R}\Big),
\qquad \forall k\ge K_2+1.\vspace{-0.1cm}
\end{equation}

Fix $k\ge K_2$ and let $N\ge k+1$. Summing \eqref{eq:SNR_telescope_step_flow} over $t=k+1,k+2,\ldots,N$   yields\vspace{-0.1cm}
\begin{equation*}
(\Sub^{N+1})_{\mathcal N_R}-(\Sub^{k+1})_{\mathcal N_R}
=
-\,W\big((\nabla^{N})_{\mathcal N_R}-(\nabla^{k})_{\mathcal N_R}\big),\quad  k\geq K_2\,\,\text{and}\,\, N\geq k+1.\vspace{-0.1cm}
\end{equation*}
Letting $N\to\infty$   and using
$\Sub^{N+1}_{\mathcal N_R}\to \Sub^\star_{\mathcal N_R}$ and $(\nabla^{N})_{\mathcal N_R}\to (\nabla F(\X^\star))_{\mathcal N_R}$, we obtain
 \begin{equation}\label{eq:SNR_clean_identity_flow}
(\Sub^{k+1})_{\mathcal N_R}-(\Sub^\star)_{\mathcal N_R}
=
W\big((\nabla F(\X^\star))_{\mathcal N_R}-(\nabla F(\X^k))_{\mathcal N_R}\big),
\qquad \forall k\ge K_2.
\end{equation}
The inequality  \eqref{eq:normal_holder_combined} follows readily from  (\ref{eq:SNR_clean_identity_flow}) using $\|W\|_2\le 1$ and the nonexpansiveness of the orthogonal projector. 
\end{proof}

We proceed now proving linear convergence. Let $K_3$ be as in Lemma~\ref{lem:TN_control}.
 For $k\ge K_3$, define the Lyapunov function
\begin{equation}\label{eq:V_affine_final}
\begin{aligned}
\mathcal V^{k+1}_{\rm aff}
:=\;&\|\X^{k+1}-\X^\star\|^2
+(\alpha^k)^2\|(\Sub^{k+1})_{\mathcal T_R}-(\Sub^\star)_{\mathcal T_R}\|^2\\
&+\Big(1+\tfrac{1}{4\alpha^kL_f}\Big)(\alpha^k)^2
\|(\Sub^{k+1})_{\mathcal N_R}-(\Sub^\star)_{\mathcal N_R}\|^2
+(\alpha^k)^2\|\Y^{k+1}-\Y^\star\|_{M^2}^2 .
\end{aligned}
\end{equation}
Moreover, denote\vspace{-0.2cm}
\begin{equation}\label{eq:gamma_def_final}
\gamma^k:=\frac{(\alpha^k)^2}{(\alpha^{k-1})^2},\qquad k\ge 0,\vspace{-0.2cm}
\end{equation}
and define, for $k\ge K_3$, 
\begin{equation}\label{eq:rho_def_final}
\!\!\!\rho_{\rm aff}^k:=\!
\begin{cases}
\dfrac{\max\Big\{\frac{1}{\alpha^k}\|\X^k-\X^\star\|,\;\|\nabla F(\X^k)-\nabla F(\X^\star)\|,\;\|\Y^{k+1}-\Y^k\|_{M^2}\Big\}}
{\|\Y^k-\Y^\star\|_{M^2}}, & \!\!\Y^k\neq \Y^\star;\\[0.9em]
+\infty,& \!\!\Y^k=\Y^\star .
\end{cases}
\end{equation}
Using Lemma~\ref{lem:TN_control} in  \eqref{eq:decomposed_descent}, 
we obtain the following descent result for $\mathcal V^{k}_{\rm aff}$.  
 \begin{proposition} \label{lem:one_step_quasi_contr}
Assume the  setting of Lemma~\ref{lem:decomposed_descent} \& Lemma~\ref{lem:TN_control}, and  that  $\MN_R$ is affine.  Then,  holds that   
$\mathcal V^{k+1}_{\rm aff}
\leq \vartheta^k \,\,\mathcal V^{k}_{\rm aff},$ for all $k\ge K_3$,
where 
\begin{align}\label{eq:vartheta_def_final}
   \vartheta^k:=&\max\Bigg\{
1-\frac{\alpha^k\mu}{8},\;
\gamma^k\Big(1-\frac{\alpha^k\mu}{8L_r^2(\alpha^{k-1})^2}\Big),\;
\frac{\gamma^k}{1+\frac{1}{4\alpha^{k-1}L_f}},\; \nonumber\\
&\qquad\quad   
1-(\rho_{\rm aff}^k)^2 \min\Big\{\frac{\alpha^k\mu}{4},\;\frac{1}{4\alpha^kL_f},\;1\Big\}
\Bigg\}.
\end{align}
\end{proposition}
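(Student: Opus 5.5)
The plan is to start from the refined descent inequality \eqref{eq:decomposed_descent} of Lemma~\ref{lem:decomposed_descent}, valid for $k\ge K_1$, restrict to $k\ge K_3$ so that Lemma~\ref{lem:TN_control} applies, and compare its right-hand side termwise with $\mathcal V^{k}_{\rm aff}$. Note that $\mathcal V^{k}_{\rm aff}$ is built with the weight $(\alpha^{k-1})^2$, while \eqref{eq:decomposed_descent} carries $(\alpha^k)^2$ on both sides. I will therefore rewrite every $(\alpha^k)^2$ multiplying a time-$k$ dual quantity as $\gamma^k(\alpha^{k-1})^2$; this is where the factors $\gamma^k$ in $\vartheta^k$ come from.

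First I handle the extra normal-component weight. By Lemma~\ref{lem:TN_control}(b), $\frac{\alpha^k}{4L_f}\|(\Sub^{k+1})_{\mathcal N_R}-(\Sub^\star)_{\mathcal N_R}\|^2\le \frac{\alpha^k}{4L_f}\|\nabla F(\X^k)-\nabla F(\X^\star)\|^2$. Adding this to both sides of \eqref{eq:decomposed_descent} gives exactly $\mathcal V^{k+1}_{\rm aff}$ on the left. It consumes half of the gradient part of $\alpha^k{\cal D}^k_{\X^\star}$, leaving $\frac{(\alpha^k)^2}{4L_f}\|\nabla F(\X^k)-\nabla F(\X^\star)\|^2$ available.

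Next I split the remaining negative terms among the four blocks of $\mathcal V^k_{\rm aff}$.

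\textbf{(a) Primal block.} I keep a quarter of $\frac{(\alpha^k)^2\mu}{2}\|\X^k-\X^\star\|^2$ for it. This gives the factor $1-\alpha^k\mu/8$; the factor $\alpha^k$ versus $(\alpha^k)^2$ is to be checked against the normalization, and I expect a bounded-stepsize absorption or a recount here.

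\textbf{(b) Tangent block.} I use Lemma~\ref{lem:TN_control}(a), $\|\X^k-\X^\star\|^2\ge L_r^{-2}\|(\Sub^k)_{\mathcal T_R}-(\Sub^\star)_{\mathcal T_R}\|^2$, and spend another quarter of the strong-convexity term on it. This yields the coefficient $\gamma^k\bigl(1-\frac{\alpha^k\mu}{8L_r^2(\alpha^{k-1})^2}\bigr)$ relative to $(\alpha^{k-1})^2\|(\Sub^k)_{\mathcal T_R}-(\Sub^\star)_{\mathcal T_R}\|^2$.

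\textbf{(c) Normal block.} On the right-hand side this block carries only $(\alpha^k)^2=\gamma^k(\alpha^{k-1})^2$, whereas $\mathcal V^k_{\rm aff}$ carries the larger weight $(1+\frac{1}{4\alpha^{k-1}L_f})(\alpha^{k-1})^2$. The ratio gives the third entry of $\vartheta^k$ directly, with no descent term needed.

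\textbf{(d) $\Y$-block.} This block has no direct negative term, which is the main obstacle. The residual $(\alpha^k)^2\|\Y^k-\Y^{k+1}\|^2_{M^2}$ is not comparable to $\|\Y^k-\Y^\star\|^2_{M^2}$ by itself. The definition of $\rho_{\rm aff}^k$ resolves this: the three leftover quantities, namely the remaining half of the strong-convexity term (expressed through $\frac1{\alpha^k}\|\X^k-\X^\star\|$), the leftover gradient term, and the $\Y$-increment, are each at least $(\rho^k_{\rm aff})^2\|\Y^k-\Y^\star\|^2_{M^2}$ times their respective coefficients. Their coefficients are $\frac{(\alpha^k)^3\mu}{4}$, $\frac{(\alpha^k)^2}{4L_f}$ and $(\alpha^k)^2$. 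The largest of the three quantities exceeds $\rho_{\rm aff}^k\|\Y^k-\Y^\star\|_{M^2}$, so I lower-bound the sum of the three contributions by the minimum coefficient times $(\rho^k_{\rm aff})^2(\alpha^k)^2\|\Y^k-\Y^\star\|^2_{M^2}$. This gives the factor $1-(\rho_{\rm aff}^k)^2\min\{\frac{\alpha^k\mu}{4},\frac{1}{4\alpha^kL_f},1\}$ on the $\Y$-block; the $\Y$-block already carries $(\alpha^k)^2$, consistent with \eqref{eq:decomposed_descent}. When $\Y^k=\Y^\star$ that block is zero, so the convention $\rho=+\infty$ is harmless.

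Finally, each block of the right-hand side is bounded by its own coefficient times the corresponding block of $\mathcal V^k_{\rm aff}$. Taking the maximum of the four coefficients gives $\mathcal V^{k+1}_{\rm aff}\le\vartheta^k\mathcal V^k_{\rm aff}$.

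The delicate bookkeeping lies in two places. One is ensuring that the quarters and halves of ${\cal D}^k_{\X^\star}$ are not double-counted between blocks (a), (b) and (d). The other is ensuring that the $\X$-term used in (d) is still available after (a) and (b).
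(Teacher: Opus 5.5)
Your route is the one the paper indicates: its only justification is to insert Lemma~\ref{lem:TN_control} into \eqref{eq:decomposed_descent}. As written, however, your bookkeeping does not produce the displayed $\vartheta^k$, for two concrete reasons. First, you never settle whether the strong-convexity gain is of order $\alpha^k$ or $(\alpha^k)^2$, and you use both. Step~1 leaves $\frac{(\alpha^k)^2}{4L_f}\|\nabla F(\X^k)-\nabla F(\X^\star)\|^2$, and (d) lists the coefficient $\frac{(\alpha^k)^2}{4L_f}$. Yet you claim $\frac{1}{4\alpha^kL_f}$ inside the $\min$, and $1-\frac{\alpha^k\mu}{8}$ in (a). Under the literal $(\alpha^k)^2$ reading, absorbing $\frac{\alpha^k}{4L_f}\|(\Sub^{k+1})_{\NS_R}-(\Sub^\star)_{\NS_R}\|^2$ needs $\frac{\alpha^k}{4L_f}\le\frac{(\alpha^k)^2}{2L_f}$, i.e.\ $\alpha^k\ge 1/2$. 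Moreover, no use of stepsize bounds turns $\frac{(\alpha^k)^2\mu}{8}$ into $\frac{\alpha^k\mu}{8}$ when $\alpha^k<1$. The resolution is that the gain is linear in $\alpha^k$. The strong-convexity/cocoercivity refinement of Lemma~\ref{lemma:distance characterization} behind \eqref{eq:decomposed_descent} yields $\frac{\alpha^k\mu}{2}\|\X^k-\X^\star\|^2+\frac{\alpha^k}{2L_f}\|\nabla F(\X^k)-\nabla F(\X^\star)\|^2$, so the extra $\alpha^k$ inside $\mathcal D^k_{\X^\star}$ is spurious. With this reading, step~1 leaves $\frac{\alpha^k}{4L_f}\|\nabla F(\X^k)-\nabla F(\X^\star)\|^2$. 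The three coefficients in (d) are then $\frac{(\alpha^k)^3\mu}{4}$, $\frac{\alpha^k}{4L_f}$ and $(\alpha^k)^2$, and their minimum is exactly $(\alpha^k)^2\min\{\frac{\alpha^k\mu}{4},\frac{1}{4\alpha^kL_f},1\}$. The split $\frac{\alpha^k\mu}{8}+\frac{\alpha^k\mu}{8}+\frac{\alpha^k\mu}{4}$ of the $\X$-term involves no double counting.

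Second, and more seriously, you drop $\gamma^k$ on the $\Y$-block. In (d) you say it ``already carries $(\alpha^k)^2$''. It does in \eqref{eq:decomposed_descent}, but in $\mathcal V^k_{\rm aff}$ it carries $(\alpha^{k-1})^2$, so your estimate gives the coefficient $\gamma^k\bigl(1-(\rho^k_{\rm aff})^2\min\{\frac{\alpha^k\mu}{4},\frac{1}{4\alpha^kL_f},1\}\bigr)$. Likewise, the arithmetic in (b) gives $\bigl((\alpha^k)^2-\frac{\alpha^k\mu}{8L_r^2}\bigr)/(\alpha^{k-1})^2=\gamma^k\bigl(1-\frac{\mu}{8L_r^2\alpha^k}\bigr)$, not $\gamma^k\bigl(1-\frac{\alpha^k\mu}{8L_r^2(\alpha^{k-1})^2}\bigr)$. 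When $\gamma^k\le 1$ both are bounded by the corresponding entries of \eqref{eq:vartheta_def_final}, so your argument goes through there. But \eqref{eq:adaptive stepsize selection rule} permits $\gamma^k>1$ for $k\ge K_3$, and then your blockwise maximum cannot produce the stated $\vartheta^k$. Every entry of \eqref{eq:vartheta_def_final} is then strictly below $\gamma^k$, while your $\Y$-coefficient tends to $\gamma^k$ as $\rho^k_{\rm aff}\to 0$. This is really a defect of the displayed constant, which the paper's one-line justification does not address. You should either prove the bound with the corrected entries, or find a genuinely non-blockwise argument. The corrected entries keep the factor $\gamma^k$ on the last entry and use $\gamma^k(1-\frac{\mu}{8L_r^2\alpha^k})$ as the second. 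This first option is harmless for Theorem~\ref{thm:contraction}, which factors out the $\gamma$'s and uses only $\gamma^k\to1$ together with lower bounds on the remaining contraction terms.
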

The above result ensures a sufficient decrease of $\mathcal V^k_{\rm aff}$ when $\rho_{\rm aff}^k$ is not too small.
The next lemma shows that if $\rho_{\rm aff}^k$ is small, then $\rho_{\rm aff}^{k+1}$ must be large enough. 

\begin{lemma}
    \label{lemma:rho^k+1}
  Assume the setting of Proposition~\ref{lem:one_step_quasi_contr}.
  \begin{itemize}
       \item[\textbf{(a)}]  If $\rho_{\texttt{aff}}^k<1$ and $k\geq K_3$, then \vspace{-0.2cm}
    \begin{equation}
        \label{eq:rho^k+1}
       \rho_{\texttt{aff}}^{k+1} \geq \frac{1}{\sqrt{\gamma^{k+1}}}\cdot \frac{1}{1+\alpha^kL_r}\left(\sqrt{c(1-\lambda_2(\widetilde{W}))}-\frac{3\rho_{\texttt{aff}}^k}{(1-\rho_{\texttt{aff}}^k)}\right); \end{equation}
       \item[\textbf{(b)}]  For every $k\ge K_3$,  \vspace{-0.3cm}   \begin{equation}
         \label{eq:rho_two_step_lb}   \max\{\rho_{\texttt aff}^k,\rho_{\texttt aff}^{k+1}\}
    \ge\
\underline{\rho}^k_{\texttt aff}:=\frac{\sqrt{c(1-\lambda_2(\widetilde W))}}{(1+\alpha^k L_r)\sqrt{\gamma^{k+1}}+\sqrt{c(1-\lambda_2(\widetilde W))}}.
       \end{equation}
  \end{itemize}
\end{lemma}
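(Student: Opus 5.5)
The plan is to obtain both parts from a single lower bound on the consensus residual $\|\Y^{k+1}-\Y^\star\|_{M^2}$. This residual is the denominator of $\rho^{k+1}_{\texttt{aff}}$, and I will bound it by quantities that appear in the numerators of $\rho^{k+1}_{\texttt{aff}}$ and $\rho^{k}_{\texttt{aff}}$.

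\emph{Step 1: an identity for $\Y$.} Start from \eqref{eq:BCV Y 3'} at iteration $k+1$. At the limit point we have $\PL\X^\star=0$ (consensus) and, from the fixed-point equation, $\PL(\Sub^\star+\PL\Y^\star+\nabla F(\X^\star))=0$. Subtracting gives
\[
\PL^2(\Y^{k+1}-\Y^\star)=\tfrac{1}{\alpha^{k+1}}\PL(\X^{k+1}-\X^\star)-(\Y^{k+2}-\Y^{k+1})-\PL\big(\Sub^{k+1}-\Sub^\star+\nabla F(\X^{k+1})-\nabla F(\X^\star)\big).
\]
Every increment of $\Y$ lies in $\texttt{range}(\PL)$, so I will choose the representative $\Y^\star$ with $\Y^k-\Y^\star\in\texttt{range}(\PL)$ for all $k$. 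On this subspace $\PL^2=c(I-\widetilde W)\succeq c(1-\lambda_2(\widetilde W))I$, while $M^2=W\preceq I$ and $\|\PL\|\le1$. Taking norms then gives
\[
\sqrt{c(1-\lambda_2(\widetilde W))}\,\|\Y^{k+1}-\Y^\star\|_{M^2}\le \tfrac1{\alpha^{k+1}}\|\X^{k+1}-\X^\star\|+\|\Y^{k+2}-\Y^{k+1}\|+\|\Sub^{k+1}-\Sub^\star\|+\|\nabla F(\X^{k+1})-\nabla F(\X^\star)\|.
\]
Some care is needed to compare the Euclidean norm of the $\Y$-increment with its $M^2$-norm on $\texttt{range}(\PL)$. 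If this comparison fails to be exact, the constant would change but not the structure of the argument.

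\emph{Step 2: splitting the $\Sub$-error.} Write $\Sub^{k+1}-\Sub^\star$ as its tangent part plus its normal part.
\begin{itemize}
\item By Lemma~\ref{lem:TN_control}(a), the tangent part is at most $L_r\|\X^{k+1}-\X^\star\|=\alpha^{k+1}L_r\cdot\frac1{\alpha^{k+1}}\|\X^{k+1}-\X^\star\|$. Together with the first term of Step 1, this produces the factor $(1+\alpha L_r)$ multiplying the numerator of $\rho^{k+1}_{\texttt{aff}}$.
\item By Lemma~\ref{lem:TN_control}(b), the normal part is at most $\|\nabla F(\X^k)-\nabla F(\X^\star)\|\le\rho^k_{\texttt{aff}}\|\Y^k-\Y^\star\|_{M^2}$, which is an iteration-$k$ quantity.
\end{itemize}
Converting between $\alpha^{k+1}$ and $\alpha^k$ introduces $\sqrt{\gamma^{k+1}}$. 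The remaining iteration-$k$ quantities ($\|\Y^{k+1}-\Y^k\|_{M^2}$, the gradient term, and $\frac1{\alpha^k}\|\X^k-\X^\star\|$ where it arises) are each bounded by $\rho^k_{\texttt{aff}}\|\Y^k-\Y^\star\|_{M^2}$. This yields three such terms.

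\emph{Step 3: closing part (a).} By the triangle inequality,
\[
\|\Y^{k+1}-\Y^\star\|_{M^2}\ge\|\Y^k-\Y^\star\|_{M^2}-\|\Y^{k+1}-\Y^k\|_{M^2}\ge(1-\rho^k_{\texttt{aff}})\|\Y^k-\Y^\star\|_{M^2},
\]
and this is positive because $\rho^k_{\texttt{aff}}<1$. Dividing the inequality from Steps 1--2 by $\|\Y^{k+1}-\Y^\star\|_{M^2}$ then turns the three iteration-$k$ terms into $3\rho^k_{\texttt{aff}}/(1-\rho^k_{\texttt{aff}})$. Rearranging gives \eqref{eq:rho^k+1}. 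I expect the main obstacle to be the bookkeeping of which index's stepsize multiplies which term: the factors $\sqrt{\gamma^{k+1}}$ and $(1+\alpha^kL_r)$ must come out exactly as stated, and Lemma~\ref{lem:TN_control}(b) is indexed with a one-step shift that has to be matched correctly.

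\emph{Part (b).} Set $a:=\sqrt{c(1-\lambda_2(\widetilde W))}$ and $b:=(1+\alpha^kL_r)\sqrt{\gamma^{k+1}}$.
\begin{itemize}
\item If $\rho^k_{\texttt{aff}}\ge\underline\rho^k_{\texttt{aff}}$, there is nothing to prove. Since $\underline\rho^k_{\texttt{aff}}<1$, this covers in particular the case $\rho^k_{\texttt{aff}}\ge1$.
\item Otherwise $\rho:=\rho^k_{\texttt{aff}}<\underline\rho^k_{\texttt{aff}}=a/(b+a)$, and part (a) applies, giving $\rho^{k+1}_{\texttt{aff}}\ge (a-3\rho/(1-\rho))/b$. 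This right-hand side is decreasing in $\rho$, so it suffices to compare it with $a/(a+b)$ at the threshold $\rho=a/(a+b)$. If the constant $3$ makes that comparison too weak, I would instead directly compare the two branches by studying the monotone function $\rho\mapsto\max\{\rho,\,(a-3\rho/(1-\rho))/b\}$ and locating its minimizer. This may require a slightly sharper version of Step 2, for instance absorbing one of the three terms into the left-hand side.
\end{itemize}
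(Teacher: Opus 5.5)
\textbf{Part (a).} Step 1 starts from the wrong recursion, so part (a) does not go through as written.

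The $\Y$-update at iteration $k+1$ isolates $\PL^2(\Y^{k+1}-\Y^\star)$. On $\texttt{range}(\PL)$, the bound $\PL^2\succeq c(1-\lambda_2(\widetilde W))I$ gives only $\|\PL^2 v\|\ge c(1-\lambda_2(\widetilde W))\|v\|$. The square root you wrote belongs to $\|\PL v\|$, not to $\|\PL^2 v\|$. Concretely, set $t:=c(1-\lambda_2(\widetilde W))$ and take $v$ a $\lambda_2$-eigenvector of $\widetilde W$. Your claimed bound $\sqrt t\,\|v\|_{M^2}\le\|\PL^2 v\|$ then reads $\sqrt{t(1-t)}\le t$, which is false whenever $t<1/2$.

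There is a second problem. Every term on the right of your identity is an iteration-$(k+1)$ quantity ($\X^{k+1}$, $\Y^{k+2}-\Y^{k+1}$, $\nabla F(\X^{k+1})$, $\Sub^{k+1}$), i.e., part of the numerator of $\rho^{k+1}_{\texttt{aff}}$. Of the three iteration-$k$ terms you invoke in Step 2, only one arises, through the normal part of $\Sub^{k+1}-\Sub^\star$. There is also no $\alpha^{k+1}\to\alpha^k$ conversion that would produce $\sqrt{\gamma^{k+1}}$. Carried out correctly, your route gives something like $c(1-\lambda_2(\widetilde W))\le C\,\rho^{k+1}_{\texttt{aff}}+\rho^k_{\texttt{aff}}/(1-\rho^k_{\texttt{aff}})$, with $C$ depending on $\alpha^{k+1}L_r$ and $c$. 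That is a valid inequality, but it is not (a), and it carries the spectral constant without the square root.

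The missing idea is to use the primal relation $\X^{k+1}=\X^k-\alpha^k(\Sub^{k+1}+\D^{k+1}+\nabla F(\X^k))$, which follows from (S.4), together with $\D^{k+1}=\PL\Y^{k+1}$ and $\Sub^\star+\D^\star+\nabla F(\X^\star)=0$. This exposes $\PL(\Y^{k+1}-\Y^\star)$ with a single factor of $\PL$, hence $\sqrt{c(1-\lambda_2(\widetilde W))}\,\|\Y^{k+1}-\Y^\star\|$. The remaining terms are then:
\begin{itemize}
\item the three $\rho^k_{\texttt{aff}}$-terms, namely $\frac{1}{\alpha^k}\|\X^k-\X^\star\|$, $\|\nabla F(\X^k)-\nabla F(\X^\star)\|$, and the normal part of $\Sub^{k+1}-\Sub^\star$ (bounded by the same gradient difference via Lemma~\ref{lem:TN_control}(b));
\item the tangent part $L_r\|\X^{k+1}-\X^\star\|$, which moves to the left to give $(1+\alpha^kL_r)\frac{1}{\alpha^k}\|\X^{k+1}-\X^\star\|$.
\end{itemize}
The identity $\frac{1}{\alpha^{k+1}}=\frac{1}{\sqrt{\gamma^{k+1}}\,\alpha^k}$, the bound $\|\cdot\|_{M^2}\le\|\cdot\|$ (since $M^2=W\preceq I$), and your Step 3 then finish the proof.

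\textbf{Part (b).} Your threshold comparison fails, as you suspected. At $\rho=a/(a+b)$, part (a) gives $a(b-3)/b^2$, and $a(b-3)/b^2\ge a/(a+b)$ holds iff $ab\ge 3(a+b)$. That is impossible, because $a<1$.

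In fact the displayed $\underline\rho^k_{\texttt{aff}}$ cannot be derived from (a) at all. For $a=1/2$ and $b=1$, (a) is vacuous at $\rho^k_{\texttt{aff}}=0.3<1/3=a/(a+b)$. So studying the minimizer of $\rho\mapsto\max\{\rho,(a-3\rho/(1-\rho))/b\}$ cannot help.

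The paper's argument uses the elementary bound $\max\{x,A-Bx/(1-x)\}\ge A/(A+B+1)$ on $[0,1)$: if $x<A/(A+B+1)$, then $x/(1-x)<A/(B+1)$. It takes $A=a/b$ and $B=3/b$. This substitution actually gives $\max\{\rho^k_{\texttt{aff}},\rho^{k+1}_{\texttt{aff}}\}\ge a/(a+b+3)$, so the displayed formula drops the $+3$ in the denominator. You should aim for this weaker constant. It is all that Theorem~\ref{thm:contraction} needs, since that theorem only uses a uniform positive lower bound on $\underline\rho^k_{\texttt{aff}}$.
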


 \textit{Proof.}   We begin proving \eqref{eq:rho^k+1}.  
Using the   updates  of Algorithm~\ref{alg:DATOS},  
\begin{equation}
    \label{eq:X^k+1}
    \X^{k+1}=\X^k-\alpha^k(\Sub^{k+1}+\D^{k+1}+\nabla F(\X^k)).
\end{equation} 
By Theorem~\ref{thm:iterates convergence} and $\alpha^k\geq\underline{\alpha}>0$,   it follows  
$\Sub^\star+\D^\star+\nabla F(\X^\star)=0$. Subtracting $\X^\star$ on the LHS and $\alpha^k(\Sub^\star+\D^\star+\nabla F(\X^\star))=0$ on the RHS, we obtain: for $k\geq K_3$, 
\begin{equation}
\label{eq:k+1 decent init}
    \begin{aligned}    & \frac{1}{\alpha^k}\|\X^{k+1}-\X^*\| \\&=\left\|(\D^{k+1}-\D^\star)-  \left[\frac{1}{\alpha^k}(\X^{k}-\X^*)-(\Sub^{k+1}-\Sub^\star)-(\nabla F(\X^k)-\nabla F(\X^*))\right]\right\|    \\
   &  \geq 
    \|\PL(\Y^{k+1}-\Y^*)\|-2\|\nabla F(\X^k)-\nabla F(\X^*)\|-\frac{1}{\alpha^k}\|\X^k-\X^*\|-L_r\|\X^{k+1}-\X^*\|, 
    \end{aligned}
\end{equation}
where  the inequality follows from decomposing   $\Sub^{k+1}-\Sub^\star$ into $\mathcal T_R/\mathcal N_R$ components and applying  Lemma~\ref{lem:TN_control}  to bound  $\|(\Sub^{k+1})_{\mathcal T_R}-(\Sub^\star)_{\mathcal T_R}\|$  and $\|(\Sub^{k+1})_{\NS_R}-\Sub^\star)_{\NS_R}\|$.    

Next,  in (\ref{eq:k+1 decent init}), we use the lower bound  $\|\PL(\Y^{k+1}-\Y^\star)\| \geq\sqrt{c(1-\lambda_2(\widetilde W))}\,\|\Y^{k+1}$ $-\Y^\star\|$ (note that $\mathbf{0}\neq\Y^{k+1}-\Y^\star\in\texttt{span}(\PL)$)   and the definition of $\rho_{\texttt{aff}}^k$ (eq.~\eqref{eq:rho_def_final}); rearranging terms we obtain: for $k\geq K_3$,
\begin{equation}
\label{eq:k+1 decent}
         \frac{1}{\alpha^k}\|\X^{k+1}-\X^*\| 
        \geq  \frac{1}{1+\alpha^kL_r}\left(\sqrt{c(1-\lambda_2(\widetilde{W}))}\|\Y^{k+1}-\Y^*\|-3\rho_{\texttt{aff}}^k\|\Y^k-\Y^*\|_{M^2}\right).
\end{equation}
Moreover, still by~\eqref{eq:rho_def_final} and $M^2=W$, we have
\begin{equation}
    \label{eq:Y^k+1}
    \|\Y^{k+1}-\Y^*\|\geq \|\Y^k-\Y^*\|_{M^2}-\|\Y^{k+1}-\Y^k\|_{M^2}\geq (1-\rho_{\texttt{aff}}^k)\|\Y^k-\Y^*\|_{M^2}.
\end{equation}

Using~\eqref{eq:k+1 decent}, \eqref{eq:Y^k+1}, and  $\rho_{\texttt{aff}}^k<1$, we finally obtain:  for $k\geq K_3$, 
 $$ \rho_{\texttt{aff}}^{k+1} \geq     
  { \frac{\alpha^k}{\alpha^{k+1}}} \frac{1}{\alpha^k}\frac{\|\X^{k+1}-\X^*\|}{\|\Y^{k+1}-\Y^*\|}\geq \frac{1/\sqrt{   \gamma^{k+1}}}{1+\alpha^kL_r}\left(\sqrt{c(1-\lambda_2(\widetilde{W}))}-\frac{3\rho_{\texttt{aff}}^k}{(1-\rho_{\texttt{aff}}^k)}\right).$$
  This proves~\eqref{eq:rho^k+1}. 
  
  We prove now \eqref{eq:rho_two_step_lb}.  First consider the case $\rho_{\rm aff}^k\ge 1$. Since $\underline\rho_{\rm aff}^k<1$,
we have
$
\max\{\rho_{\rm aff}^k,\rho_{\rm aff}^{k+1}\}\ \ge\ 1\ \ge\ \underline\rho_{\rm aff}^k.
$ Now consider the case $\rho_{\rm aff}^k<1$. Set $x:=\rho_{\rm aff}^k\in[0,1)$,  and define
\[
a:=\frac{\sqrt{c(1-\lambda_2(\widetilde W))}}{(1+\alpha^k L_r)\sqrt{\gamma^{k+1}}},
\qquad
b:=\frac{3}{(1+\alpha^k L_r)\sqrt{\gamma^{k+1}}}.
\]
Then \eqref{eq:rho^k+1} reads\vspace{-0.1cm}
\[
\rho_{\rm aff}^{k+1}\ \ge\ a-b\,\frac{x}{1-x};\quad \text{and}\quad \max\{\rho_{\rm aff}^k,\rho_{\rm aff}^{k+1}\}
\ \ge\
\max\left\{x,\;a-b\,\frac{x}{1-x}\right\}.
\]
It is not difficult to prove \vspace{-0.1cm}
 \[
\max\left\{x,\;a-b\,\frac{x}{1-x}\right\}\ \ge\  \frac{a}{a+b+1},\qquad \forall x\in[0,1).
\] Substituting $a$ and $b$ in the lower bound yields \eqref{eq:rho_two_step_lb}.
  \hfill $\square$

Using  Lemma~\ref{lemma:rho^k+1} in Proposition~\ref{lem:one_step_quasi_contr},   leads to the following contraction property of  $ {\cal V}^{k+2}_{\texttt{aff}}$  every two consecutive iterations.  
\begin{theorem}[Linear convergence] 
    \label{thm:contraction} Let $\{\X^k,\Sub^k,\Y^k\}$ be generated by Algorithm~\ref{alg:DATOS}   in the setting of Theorem~\ref{thm:iterates convergence},    
$\X^k\to \X^\star=\mathbf 1 (x^\star)^\top$. Further assume  Assumption~\ref{ass:local property} at $\X^\star$, with $\MN_R$ affine.  Then
    \begin{equation}
        \label{eq:contraction}
         {\cal V}^{k+2}_{\texttt{aff}}\leq  \max\{\gamma^k\gamma^{k+1},1\}\cdot (1-r^k) \cdot {\cal V}^k_{\texttt{aff}},\quad \forall k\geq K_3, 
        \end{equation}
          where   $K_3\geq 1$ is such that Lemma~\ref{lem:TN_control} holds for all $k\ge K_3$, and  \vspace{-.1cm}\begin{equation*}
              r^k:=\min\left\{\frac{\alpha^k\mu}{8},\frac{\alpha^k\mu}{8L_r^2(\alpha^{k-1})^2},\frac{1}{1+4\alpha^{k-1}L_f}, (\underline{\rho}^k_{\texttt aff})^2\cdot\min\left\{\frac{\alpha^k\mu}{4},\frac{1}{4\alpha^kL_f},1\right\}\right\}. 
          \end{equation*}
         Furthermore,   there exists an index $K_4\ge K_3$ such that
\begin{equation}
\label{eq:q_const_def_thm}
r_{\rm aff}
:=\sup_{k\ge K_4}\Big(\max\{\gamma^k\gamma^{k+1},\,1\}\cdot(1-r^k)\Big)\;\in\;(0,1),
\end{equation}
and consequently\vspace{-.3cm}
\begin{equation}
\label{eq:R_linear_V_thm}
\mathcal V_{\rm aff}^{k}\;\le\; r_{\rm aff}^{\lfloor (k-K_4)/2\rfloor}\,\mathcal V_{\rm aff}^{K_4},
\qquad \forall k\ge K_4.
\end{equation}
Therefore  $\|\X^k-\X^\star\|\to 0$   $R$-linearly.
\end{theorem}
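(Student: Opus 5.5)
We would obtain the two-step contraction \eqref{eq:contraction} by applying Proposition~\ref{lem:one_step_quasi_contr} twice, at indices $k$ and $k+1$, and using Lemma~\ref{lemma:rho^k+1}(b) to ensure that at least one of the two steps carries the $\rho$-dependent decrease. Concretely, $\mathcal V^{k+2}_{\rm aff}\le \vartheta^{k+1}\vartheta^k\mathcal V^k_{\rm aff}$. Each $\vartheta^t$ is a maximum of four terms. We would rewrite them in the form $\max\{\gamma^t,1\}(1-s^t)$, where $s^t$ is the smallest of the "decrement" quantities. For instance, $\gamma^t/(1+\tfrac{1}{4\alpha^{t-1}L_f})=\gamma^t(1-\tfrac{1}{1+4\alpha^{t-1}L_f})$, and $\gamma^t(1-\tfrac{\alpha^t\mu}{8L_r^2(\alpha^{t-1})^2})$ already has this form. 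The fourth term is $1-(\rho^t_{\rm aff})^2\min\{\cdot\}$.

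Case analysis: by Lemma~\ref{lemma:rho^k+1}(b), either $\rho^k_{\rm aff}\ge\underline\rho^k_{\rm aff}$ or $\rho^{k+1}_{\rm aff}\ge\underline\rho^k_{\rm aff}$. In the first case we bound $\vartheta^k\le\max\{\gamma^k,1\}(1-r^k)$ and $\vartheta^{k+1}\le\max\{\gamma^{k+1},1\}$, using only that each entry of $\vartheta^{k+1}$ is at most $\max\{\gamma^{k+1},1\}$. In the second case we swap the roles of the two steps. In the second case the decrement involves quantities at index $k+1$ rather than $k$. To get $r^k$ exactly as stated, we would either define $r^k$ as a min over both indices or note that $\alpha^{k+1}$ and $\alpha^k$ are comparable. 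Since the paper states $r^k$ with index-$k$ quantities, we expect to absorb the discrepancy into the constants; this mismatch is a detail to check rather than a conceptual step. Finally, $\max\{\gamma^k,1\}\max\{\gamma^{k+1},1\}$ should be replaced by $\max\{\gamma^k\gamma^{k+1},1\}$. Asymptotically this is harmless, though as a pointwise bound it needs care. We would handle it using that eventually $\gamma^t\ge 1$, because the stepsize is eventually nondecreasing (Lemma~\ref{lemma:legal stepsize}(ii)), so the two expressions coincide for large $k$.

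For the uniform rate \eqref{eq:q_const_def_thm}, we use Lemma~\ref{lemma:legal stepsize}(ii): $\alpha^k\to\alpha\ge\underline\alpha>0$, hence $\gamma^k\to1$ and $\gamma^k\gamma^{k+1}\to1$. Each entry of $r^k$ then converges to a strictly positive limit: $\alpha\mu/8$, $\mu/(8L_r^2\alpha)$, $1/(1+4\alpha L_f)$, and $\underline\rho^2\min\{\alpha\mu/4,1/(4\alpha L_f),1\}$, where $\underline\rho$ is the limit of $\underline\rho^k_{\rm aff}$, which is positive since $c(1-\lambda_2(\widetilde W))>0$ by connectivity. So $1-r^k\to 1-\bar r<1$, and the product with $\max\{\gamma^k\gamma^{k+1},1\}\to1$ tends to $1-\bar r$. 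Choosing $K_4$ such that all factors are within $\bar r/2$ of their limits gives the supremum at most $1-\bar r/2<1$. The supremum is positive because every $r^k<1$: for example, $1/(1+4\alpha L_f)<1$.

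Iterating \eqref{eq:contraction} in steps of two from $K_4$ gives $\mathcal V^{K_4+2j}_{\rm aff}\le r_{\rm aff}^j\mathcal V^{K_4}_{\rm aff}$. For odd offsets, a single application of Proposition~\ref{lem:one_step_quasi_contr} with $\vartheta^k\le\max\{\gamma^k,1\}$ adds only a bounded factor. That factor can be absorbed, or the bound can be stated from the appropriate parity start, yielding \eqref{eq:R_linear_V_thm} up to a constant. Since $\|\X^k-\X^\star\|^2\le\mathcal V^k_{\rm aff}$, R-linear convergence follows. The main obstacle is the bookkeeping in the two-case argument, namely matching index-$k$ versus index-$k+1$ constants and the $\gamma$ factors. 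Everything else is direct.
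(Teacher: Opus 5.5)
Your proposal is correct and follows the paper's proof. You apply $\mathcal V^{t+1}_{\rm aff}\le\vartheta^t\mathcal V^t_{\rm aff}$ at $t=k,k+1$, use $\max\{\rho^k_{\rm aff},\rho^{k+1}_{\rm aff}\}\ge\underline\rho^k_{\rm aff}$ to place the $\rho$-decrement on one of the two steps, factor out the $\gamma$'s, and then use $\alpha^k\to\alpha>0$ (so $\gamma^k\to1$ and $r^k$ stays bounded away from zero) to get $r_{\rm aff}\in(0,1)$. The paper also glosses over the bookkeeping points you flag, namely the index-$(k+1)$ decrement in the second case and $\max\{\gamma^k,1\}\max\{\gamma^{k+1},1\}$ versus $\max\{\gamma^k\gamma^{k+1},1\}$; your fixes work: take the min over both indices in $r^k$, and enlarge $K_3$ so that $\gamma^t\ge1$. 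For odd offsets you can drop the extra constant, since $\vartheta^t\le1$ for all large $t$: $\gamma^t\to1$ and the first three decrements in $\vartheta^t$ are bounded away from zero.
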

\begin{proof} The inequality  \eqref{eq:contraction} follow readily  applying  twice $\mathcal V^{k+1}_{\rm aff}
\leq \vartheta^k \,\,\mathcal V^{k}_{\rm aff}$ (Proposition~\ref{lem:one_step_quasi_contr})--yielding   $\mathcal V^{k+2}_{\rm aff}\ \le\ \vartheta^{k+1}\vartheta^{k}\,\mathcal V^{k}_{\rm aff}$--
factoring out
$\max\{\gamma^k\gamma^{k+1},$ $1\}$ from $\vartheta^{k+1}\vartheta^{k}$, and finally using the lower bound (\ref{eq:rho_two_step_lb}) (Lemma~\ref{lemma:rho^k+1}). 

We are left to prove that $r_{\texttt{aff}}\in (0,1)$. \textbf{(i)} Using $0<\underline\alpha\le \alpha^k\le \overline\alpha$ and $\gamma^k=\frac{(\alpha^k)^2}{(\alpha^{k-1})^2}\le \frac{\overline\alpha^2}{\underline\alpha^2}<+\infty$ for all $k$,  we  deduce    $r^k\geq \underline{r}>0$ for all $k\geq K_3$ and some $\underline r$ (each term in the minimum defining $r^k$ is bounded below by a strictly positive constant). \textbf{(ii)} Furthermore,   since $\alpha^k\to \alpha^\infty>0$, we have
$\gamma^k\to \frac{(\alpha^\infty)^2}{(\alpha^\infty)^2}=1$, implying $\max\{\gamma^k\gamma^{k+1},1\} \to 1.$  Pick any $\varepsilon\in\bigl(0, {\underline r}/({2(1-\underline r))}\bigr)$.
By the above limit, there exists $K_4\ge K_3$ such that, for all $k\ge K_4$, $\max\{\gamma^k\gamma^{k+1},1\}\ \le\ 1+\varepsilon.$ Combining \textbf{(i)} and \textbf{(ii)}, we deduce \vspace{-0.2cm}\[
r_{\rm aff}^k =\max\{\gamma^k\gamma^{k+1},1\}(1-r^k)
\ \le\ (1+\varepsilon)(1-\underline r)
\ <\ 1-\frac{\underline r}{2}
=: r_{\rm aff}\ \in(0,1).
\] This proves \eqref{eq:q_const_def_thm}.
\end{proof}
 From Theorem~\ref{thm:contraction} and following the same line of proof as in \cite[Lemma 12]{kuruzov2024achieving} we can finally obtain the following complexity for an $\varepsilon$ solution.  
\begin{corollary}
\label{cor:N epsilon}
   Assume the setting of Theorem~\ref{thm:contraction}; and let Let $\kappa_f:=L_f/\mu$ and $\kappa_r:=L_r/\mu$, Then $\|\X^k-\X^\star\|\leq \varepsilon$, for all $k\geq N_\varepsilon$, with  \vspace{-0.1cm}
    \begin{equation}
        \label{eq:N epsilon}
N_{\varepsilon}=\mathcal{O}\left(\frac{\kappa_f(1+\kappa_r^2)}{1-\lambda_2(\widetilde{W})}\log\left(\frac{1}{\varepsilon}\right)\right).
    \end{equation}
\end{corollary}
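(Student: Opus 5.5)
The plan is to convert the two-step contraction of Theorem~\ref{thm:contraction} into an iteration count by making the rate $r^k$ explicit in terms of $\kappa_f$, $\kappa_r$ and the spectral gap. This follows the template of \cite[Lemma 12]{kuruzov2024achieving}. By \eqref{eq:R_linear_V_thm}, $\|\X^k-\X^\star\|^2\le \mathcal V^k_{\rm aff}\le r_{\rm aff}^{\lfloor (k-K_4)/2\rfloor}\mathcal V^{K_4}_{\rm aff}$. Hence $\|\X^k-\X^\star\|\le\varepsilon$ as soon as
\[
k\ \ge\ K_4+2+\frac{2}{1-r_{\rm aff}}\log\Big(\frac{\mathcal V^{K_4}_{\rm aff}}{\varepsilon^2}\Big),
\]
using $\log(1/r_{\rm aff})\ge 1-r_{\rm aff}$. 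So it suffices to show $1-r_{\rm aff}\gtrsim \min_k r^k=\Omega\big((1-\lambda_2(\widetilde W))/(\kappa_f(1+\kappa_r^2))\big)$. The constants $K_4$ and $\mathcal V^{K_4}_{\rm aff}$ are absorbed into the $\mathcal O(\cdot)$, as is done in the cited lemma, since they do not depend on $\varepsilon$.

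\textbf{Stepsize scale.} I would work for $k\ge K_4$, where $\gamma^k,\gamma^{k+1}$ are close to one (say $\max\{\gamma^k\gamma^{k+1},1\}\le 1+\varepsilon'$ as in the proof of Theorem~\ref{thm:contraction}), so $1-r_{\rm aff}\ge \underline r/2$ with $\underline r:=\inf_{k\ge K_4}r^k$. Next I would fix the scale of $\alpha^k$. The backtracking guarantee \eqref{eq:stepsize lower bound} gives $\alpha^k\ge \eta\delta/L_f$ (taking $\alpha^{-1}$ not smaller). Since $\alpha^k$ converges and the extra increments $n^k$ are summable, $\alpha^k\approx\alpha^{k-1}\approx\alpha$ with $\alpha=\Theta(1/L_f)$ in the worst case. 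For the upper direction I would use $\alpha^k\le\bar\alpha$. When $\alpha L_f$ is not $\mathcal O(1)$, the term $1/(4\alpha^kL_f)$ degrades, and I would handle this by noting that it then balances against $\alpha^k\mu$. Concretely, I would bound $\min\{\alpha\mu/4,\,1/(4\alpha L_f)\}\ge c/\kappa_f$ whenever $\alpha\in[\eta\delta/L_f,\,1/\sqrt{\mu L_f}]$, and otherwise treat $\alpha\lesssim 1/L_f$ as the regime of interest. This gives worst-case $\min\{\alpha\mu/4,1/(4\alpha L_f),1\}=\Omega(1/\kappa_f)$.

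\textbf{The four terms of $r^k$.} With $\alpha\simeq 1/L_f$:
\begin{itemize}
\item $\alpha\mu/8=\Omega(1/\kappa_f)$.
\item $\alpha\mu/(8L_r^2\alpha^2)=\mu/(8L_r^2\alpha)\simeq \mu L_f/L_r^2$. Since $\kappa_f\ge1$, this is $\ge \kappa_f^{-1}\kappa_r^{-2}\cdot\kappa_f^2/8$, which is $\Omega(1/(\kappa_f\kappa_r^2))$.
\item $1/(1+4\alpha L_f)=\Omega(1)$.
\item In $\underline\rho^k_{\rm aff}$ I would use $\gamma^{k+1}\le 1+\varepsilon'$ and $1+\alpha L_r\lesssim 1+\kappa_r/\kappa_f\le 1+\kappa_r$. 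This gives $(\underline\rho^k_{\rm aff})^2\gtrsim c(1-\lambda_2(\widetilde W))/(1+\kappa_r)^2$, using $c(1-\lambda_2)\le 1$ in the denominator. Multiplying by $\Omega(1/\kappa_f)$ yields $\Omega\big((1-\lambda_2(\widetilde W))/(\kappa_f(1+\kappa_r^2))\big)$, treating $c$ as a fixed constant.
\end{itemize}
Taking the minimum, the last term dominates, which yields \eqref{eq:N epsilon}.

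\textbf{Main obstacle.} The delicate step is the stepsize scale. The rate terms are non-monotone in $\alpha$ (both $\alpha\mu$ and $1/(\alpha L_f)$, or $1/(\alpha L_r^2)$, appear), and the adaptive $\alpha$ is only known to lie in $[\underline\alpha,\bar\alpha]$. I would handle this as in \cite[Lemma 12]{kuruzov2024achieving}. The key observation is that $L_f$ may be taken as the local curvature bound actually enforced by the line search near $\X^\star$, so $\alpha\le \delta/L^k$-type reasoning keeps $\alpha L_f=\Theta(1)$ for the constants appearing in the rate.
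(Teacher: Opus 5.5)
Your reduction is the one the paper intends when it defers to \cite[Lemma 12]{kuruzov2024achieving}. From \eqref{eq:R_linear_V_thm} and $\|\X^k-\X^\star\|^2\le\mathcal V^k_{\rm aff}$, everything reduces to a lower bound on $\underline r=\inf_{k\ge K_4}r^k$. Your term-by-term bounds are correct \emph{if} $\alpha=\Theta(1/L_f)$.

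The gap is the justification of that scale from above. Apart from $\alpha^k\mu/8$, the ingredients of $r^k$ deteriorate as the stepsize grows. These are $\frac{\alpha^k\mu}{8L_r^2(\alpha^{k-1})^2}$, $\frac{1}{1+4\alpha^{k-1}L_f}$, $\frac{1}{4\alpha^kL_f}$, and the factor $1+\alpha^kL_r$ in $\underline\rho^k_{\rm aff}$. Your ``key observation'' does not control them:
\begin{itemize}
\item $L_f$ enters $r^k$ through the cocoercivity term $\frac{\alpha^k}{2L_f}\|\nabla F(\X^k)-\nabla F(\X^\star)\|^2$ of Lemma~\ref{lem:decomposed_descent} and through the weight $1+\frac{1}{4\alpha^kL_f}$ in $\mathcal V_{\rm aff}$. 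So it must be a true Lipschitz constant of $\nabla F$ on ${\cal H}$.
\item It therefore cannot be replaced by the line-search estimate $L^k$. That estimate only measures curvature along $\AF^{k+1}-\X^k$ and may be much smaller than $L_f$.
\item Consequently $\alpha^k\le\delta/L^k$ gives no bound of the form $\alpha^k\lesssim 1/L_f$. The only a priori upper bound is $\bar\alpha$, which is set by $\alpha^{-1}$ and $\sum_t n^t$ and is unrelated to $\kappa_f,\kappa_r$.
\end{itemize}
Your balancing range $[\eta\delta/L_f,1/\sqrt{\mu L_f}]$ is fine as far as it goes. But ``otherwise treat $\alpha\lesssim 1/L_f$ as the regime of interest'' simply assumes the remaining case away.

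The missing ingredient is to apply local strong convexity to the backtracking test itself. Since $\X^k\to\X^\star$ and $\AF^{k+1}-\X^k\to 0$, for all large $k$ the segment $[\X^k,\AF^{k+1}]$ lies in the neighborhood $\mathcal U$ of Assumption~\ref{ass:local property}(i), taken convex without loss of generality. Then \eqref{eq:Lk} gives $L^k\ge\mu$, hence $\alpha^k\le\delta/L^k\le\delta/\mu$. Steps with $\AF^{k+1}=\X^k$ can only add the vanishing tail of $\{n^t\}$.

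Now take $\alpha^{k-1},\alpha^k\in[\eta\delta/L_f,\delta/\mu]$, with $\alpha^{-1}\ge\eta\delta/L_f$ as you assume. Every term is then controlled with no assumption on $\alpha L_f$:
\begin{itemize}
\item $\frac{\alpha^k\mu}{8}\ge\frac{\eta\delta}{8\kappa_f}$;
\item $\frac{\alpha^k\mu}{8L_r^2(\alpha^{k-1})^2}\ge\frac{\eta}{8\delta\,\kappa_f\kappa_r^2}$;
\item $\frac{1}{1+4\alpha^{k-1}L_f}\ge\frac{1}{1+4\delta\kappa_f}$;
\item $\min\{\frac{\alpha^k\mu}{4},\frac{1}{4\alpha^kL_f},1\}\ge\frac{\eta\delta}{4\kappa_f}$;
\item $1+\alpha^kL_r\le 1+\delta\kappa_r$, so with $\gamma^{k+1}\le 1+\varepsilon'$ one gets $(\underline\rho^k_{\rm aff})^2\gtrsim c(1-\lambda_2(\widetilde W))/(1+\kappa_r)^2$.
\end{itemize}
This gives $\underline r\gtrsim(1-\lambda_2(\widetilde W))/(\kappa_f(1+\kappa_r^2))$, with constants depending only on $\eta,\delta,c$. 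Your conversion to $N_\varepsilon$ then goes through unchanged.
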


Corollary~\ref{cor:N epsilon} shows that the proposed algorithms enter a \emph{linear} convergence regime \emph{after} the finite-time manifold identification in Theorem~\ref{thm:finite activity identification}. In \eqref{eq:N epsilon}, the network dependence is standard via the inverse spectral gap, while partial smoothness affects the rate only through $(1+\kappa_r^2)$, where $\kappa_r=L_r/\mu$ and $L_r$ comes from the tangent subgradient control in Lemma~\ref{lem:TN_control}(a). Thus, the complexity matches existing non-adaptive methods up to constants, with an extra factor capturing the local variability of $R$ along the active manifold. 
This factor vanishes for polyhedral penalties (e.g., $r_i(x)=\lambda_i\|x\|_1$), for which $L_r=0$ (hence $\kappa_r=0$) after identification. In this regime, DATOS recovers exactly the same asymptotic iteration complexity as the aforementioned non-adaptive baselines, while retaining the benefit of an adaptive stepsize selection prior to (and after) identification.

\section{Numerical Results}
\label{sec:numeric}
This section reports numerical experiments comparing Algorithms~\ref{alg:DATOS} (\texttt{global\_DATOS})  and~\ref{alg:DATOS_local} (\texttt{local\_DATOS}) with representative decentralized baselines. As nonadaptive benchmarks, we consider SONATA~\cite{sun2019distributed} and PG-EXTRA~\cite{shi2015proximal}, both relying on constant stepsizes that depend on (global) problem and network parameters. For these methods, we perform a manual grid-search over admissible stepsizes and report the best-performing choice yielding stable and fast convergence. We further include decentralized implementation  of the adaptive \emph{centralized} method adaPDM~\cite{Latafat_23b}. We stress that adaPDM is not designed to be decentralized; when implemented  in decentralized settings, it is not fully parameter-free, as it requires the knowledge at the agents' side of the global network-related quantity $\|I-W\|$.  To illustrate the effect of such knowledge, we simulate two variants: {\bf (i)} \texttt{adaPDM}--assuming access to   $\|I-W\|(\leq 2)$; and {\bf (ii)} \texttt{adaPDM2}--using the conservative network-agnostic upper bound $2$ for  $\|I-W\|2$.   In both cases, we manually tune the parameter $t$ (as appears in~\cite{Latafat_23b}) to obtain the best observed convergence behavior.  We anticipate that both adaPDM and adaPDM2 are very sensitive to the choice of $t$.

   We generate Erd\H{o}s--R\'enyi graphs with $m=20$ agents and edge probability $p\in\{0.1,0.5,0.9\}$, spanning poorly to well connected regimes. All methods employ Metropolis-Hastings gossip weights (see, e.g.,~\cite{Nedic_Olshevsky_Rabbat2018}). For Algorithms~\ref{alg:DATOS} and~\ref{alg:DATOS_local}, we initialize $\X^0$ and $\Sub^0$ randomly and set $\alpha^{-1}=10$, $\delta=0.9$, and $c=1/3$.

\subsection{Logistic regression with $\ell_1$-regularization}
Consider  the decentralized logistic regression problem with $\ell_1$-regularization, which is an instance of \eqref{eq:problem}, with \vspace{-.2cm}
\begin{equation*}\label{eq:logistic-scvx}f_i(x)=\frac{1}{n}\sum_{j=1}^{n}\log(1+\exp(-b_{ij}\cdot\langle x,a_{ij}\rangle)),\quad r(x)=\lambda\|x\|_1,\vspace{-.2cm}\end{equation*}
where  $a_{ij}\in \mathbb{R}^{d}$ and $b_{ij}\in\{-1,1\}$. The data set  $\{(a_{ij},b_{ij})\}_{j=1}^n$ is  owned   by agent $i$. We use the MNIST dataset from LIBSVM~\cite{chih2libsvm}, taking  the first $N=6000$   samples   (hence $n=300$). The feature dimension is $d=784$. We set  $\lambda=10^{-5}$. 

Figure~\ref{fig:general cvx}   plots the optimality gap  $({1}/{m})\sum_{i=1}^mu(x_i^\nu)-u^*$ achieved by all the algorithms versus the number of iterations $\nu$,       where   $u^*$ is the minimum value of $u$.   
Both \texttt{global\_DATOS} and \texttt{local\_DATOS} consistently outperform SONATA and PG-EXTRA,  while requiring no manual stepsize tuning. Moreover, they improve markedly over adaPDM in both variants (\texttt{adaPDM} and \texttt{adaPDM2}).  Finally, the performance gap between \texttt{local\_DATOS} (local min-consensus) and \texttt{global\_DATOS} (global min-consensus) is negligible, particularly on well connected graphs.

\begin{figure*}[htbp] 
    \centering
    \includegraphics[width=0.32\linewidth]{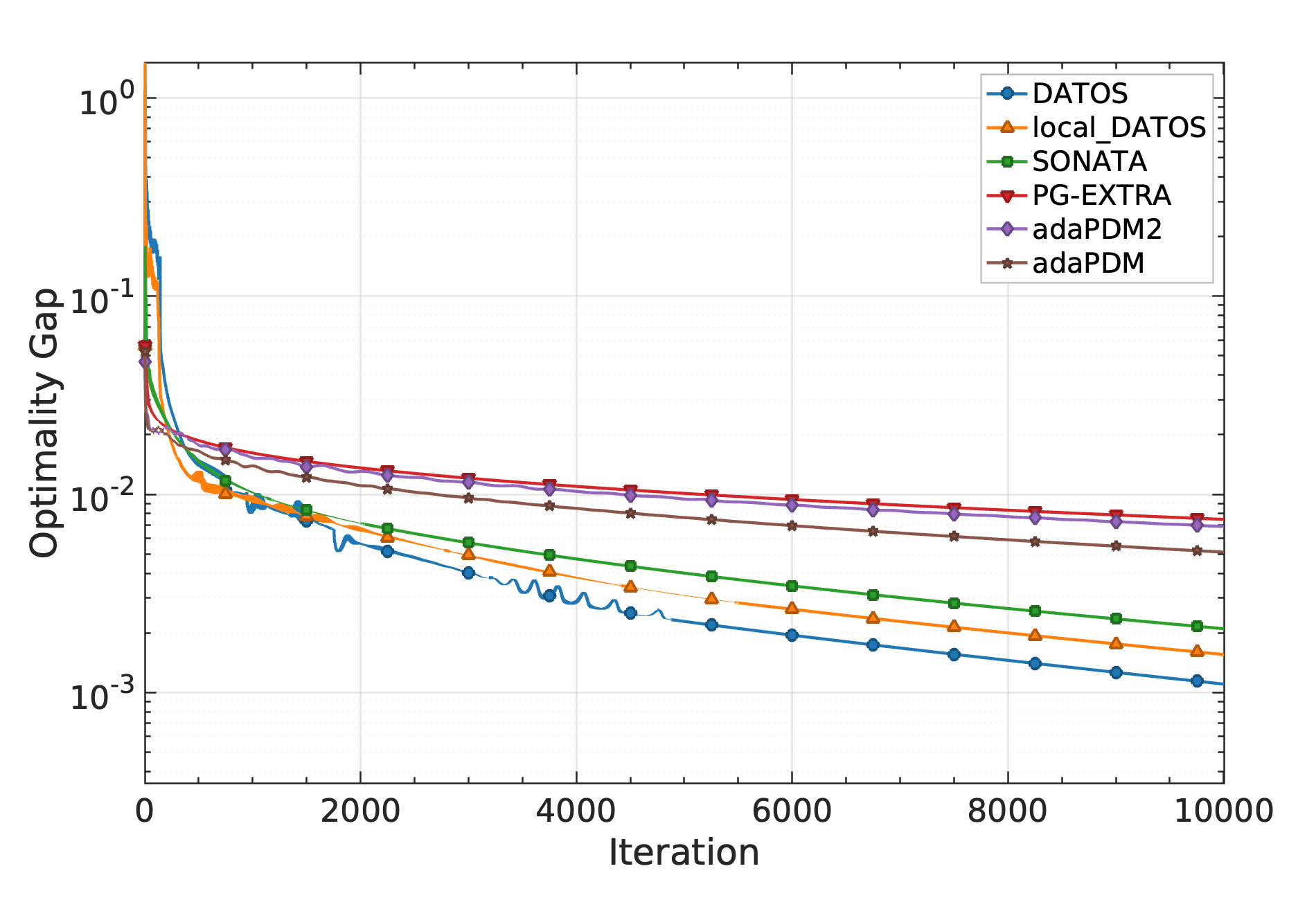}
\includegraphics[width=0.32\linewidth]{siam_sim/siam_logistic_1.png}
\includegraphics[width=0.32\linewidth]{siam_sim/siam_logistic_1.png}
\caption{Logistic regression with $\ell_1$-regularization: ${\frac{1}{m}\sum_{i=1}^m u(x_i)-u(x^*)}$ v.s. \# iterations.  Comparison of PG-EXTRA, SONATA, adaPDM, adaPDM2, global\_DATOS and local\_DATOS on Erdos-Renyi graphs with   edge-probability   $p=0.1$  (left);   $p=0.5$ (middle); and  $p=0.9$ (right).}\vspace{-0.3cm} 
\label{fig:general cvx}
\end{figure*}

\subsection{Maximum Likelihood (ML) estimate of the covariance matrix}
We consider decentralized estimation of an inverse covariance matrix, an instance of~\eqref{eq:problem} with variable $X\in\mathbb{S}^d_{++}$ and\vspace{-0.2cm}
\begin{equation*}\label{eq:maximum likelihood}f_i(X)=-n(\log(\texttt{det}(X)))-\texttt{trace}(XY_i),\qquad r(X)=\delta_C(X),\end{equation*}
 where $C=\{X\in\mathbb{S}^{d}_{++}: a I\preceq X\preceq bI\}$, for some $0<a\leq b$, and $Y_i=\frac{1}{n}\sum_{j=1}^n y_j^i (y_j^i)^{\top}$ for $\{y_j^i\}$ sampled from   a Gaussian distribution with covariance matrix $\Sigma\in\mathbb{S}^d_{++}$. We set $n=100$ and $d=5$. This is a representative case where $f$ is only \emph{locally} smooth; consequently, SONATA and PG-EXTRA do not come with global convergence guarantees, and stable behavior requires conservative stepsize choices. We again tune their stepsizes for best stable performance.

Figure~\ref{fig:general local smooth} reports $\bar u^k-u^\star$ versus the iteration counter, where $u^\star$ is computed via a centralized proximal-gradient method with line-search up to tolerance $10^{-30}$. The plots show that \texttt{global\_DATOS} and \texttt{local\_DATOS} consistently outperform  SONATA and PG-EXTRA--which require conservative stepsize selections to ensure stable global convergence--including both adaPDM variants. We also note that, in this experiment, the iterates generated by  \texttt{local\_DATOS} (and \texttt{global\_DATOS}) remain bounded due to the compact constraint set enforced by $r(X)=\delta_C(X)$. Therefore, the boundedness /local-smoothness hypotheses required by Theorem~\ref{thm:convergence local_DATOS} are automatically satisfied.

\begin{figure*}[htbp] 
    \centering
    \includegraphics[width=0.32\linewidth]{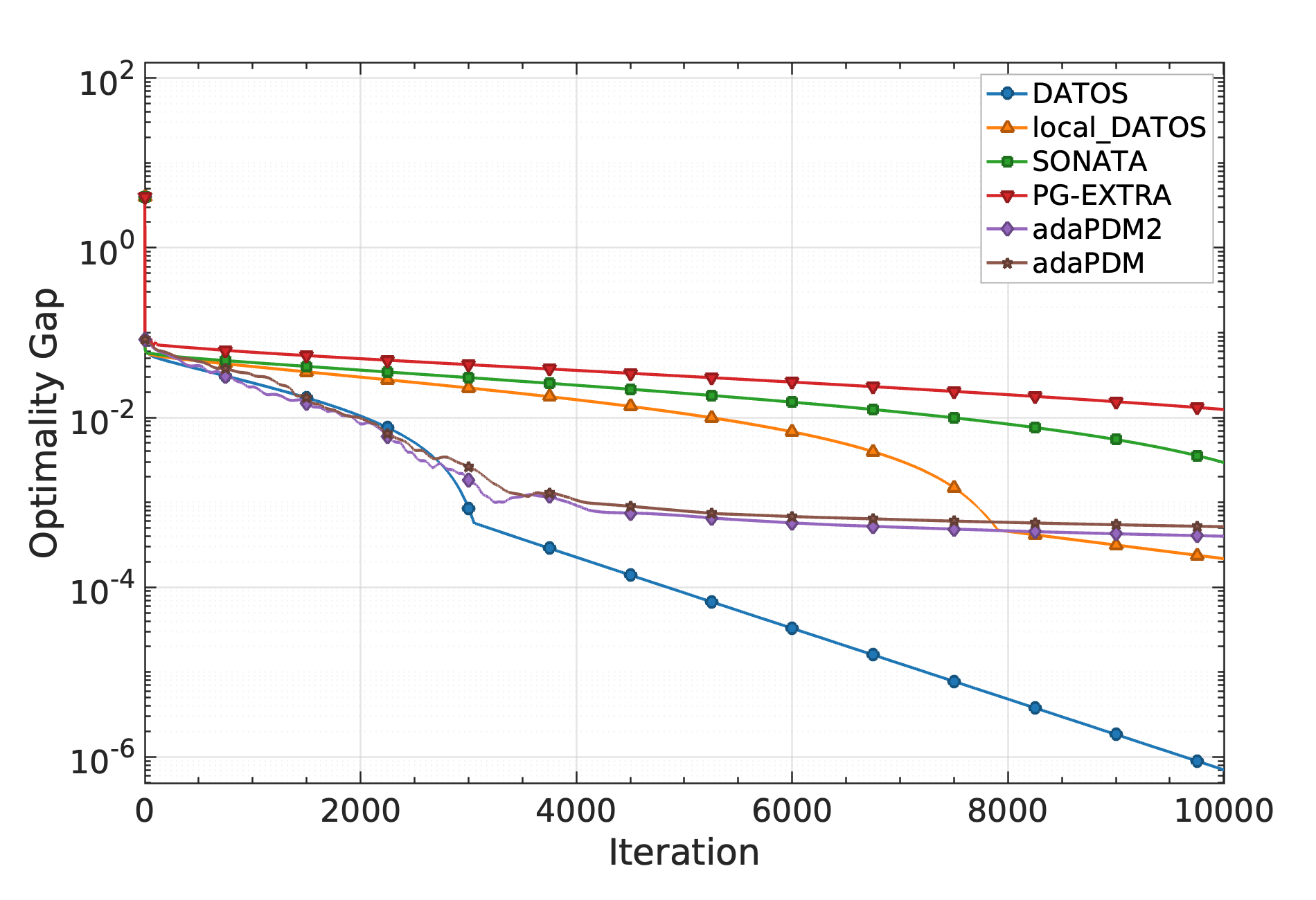}  \includegraphics[width=0.32\linewidth]{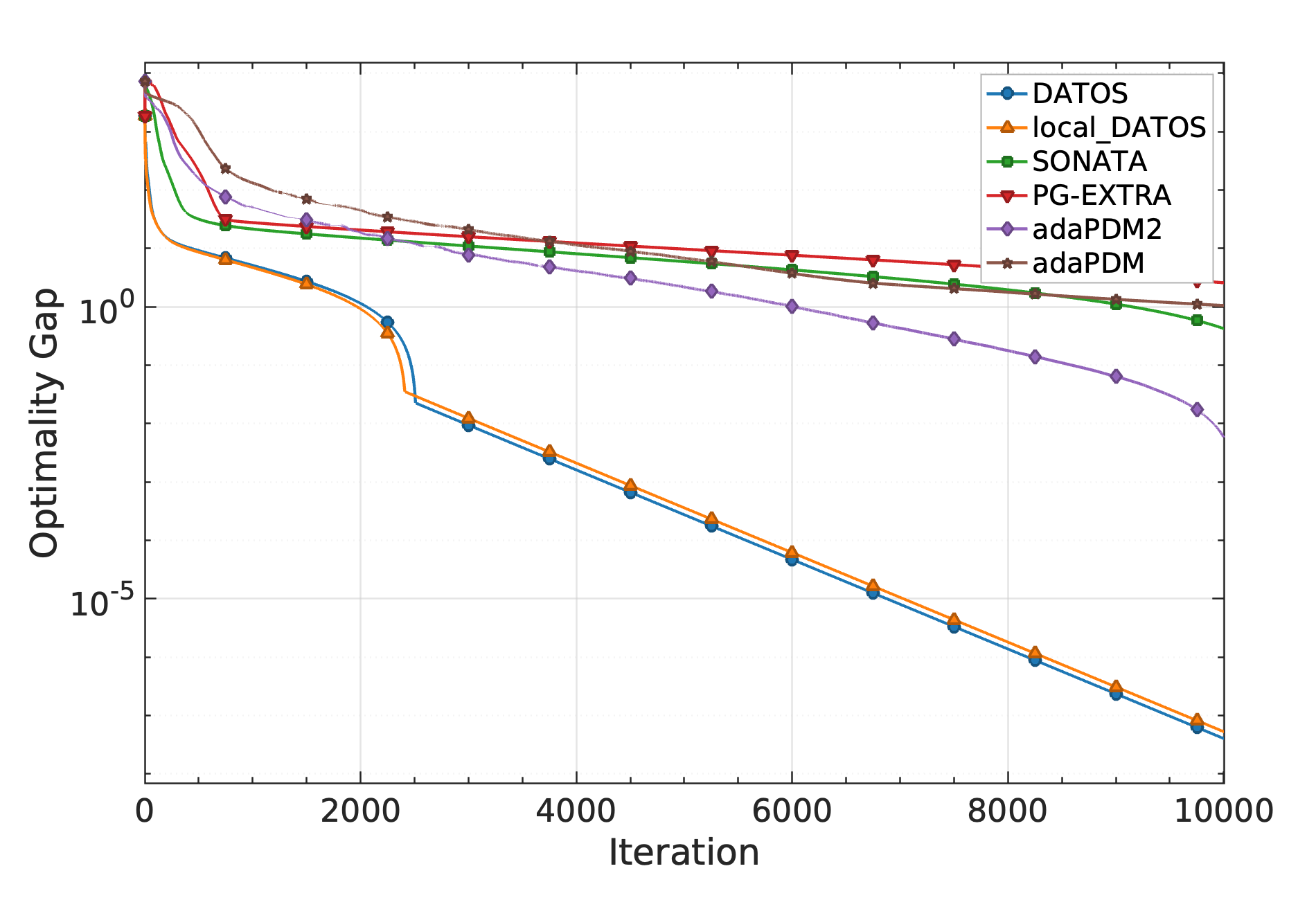}
\includegraphics[width=0.32\linewidth]{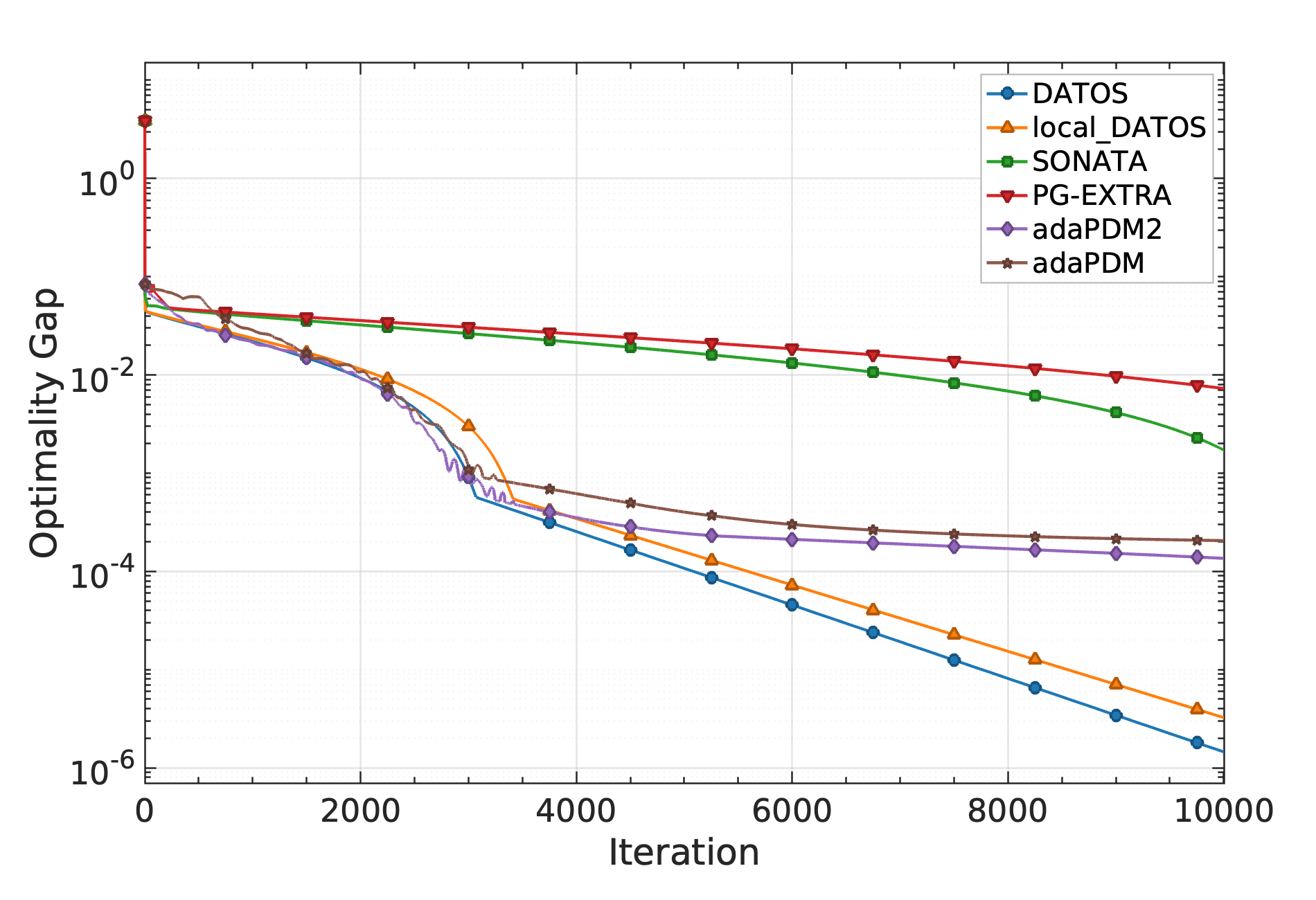}
    \caption{ML estimate of the covariance matrix: ${\frac{1}{m}\sum_{i=1}^m u(x_i)-u(x^*)}$ v.s. \# iterations.  Comparison of PG-EXTRA, SONATA, adaPDM, adaPDM2, global\_DATOS and local\_DATOS on Erdos-Renyi graphs with   edge-probability:  $p=0.1$  (left);   $p=0.5$ (middle); and  $p=0.9$ (right).}\vspace{-0.45cm}

\label{fig:general local smooth}
\end{figure*}

\subsection{Linear regression with elastic net regularization} We finally consider a strongly convex composite instance of~\eqref{eq:problem}, namely $\ell_2$-regularized least squares with an $\ell_1$ penalty:\vspace{-0.4cm}
\begin{equation}
    f_i(x)=\frac{1}{n}\|A_ix-b_i\|^2+\frac{\gamma_i}{2}\|x\|^2,\quad r(x)=\lambda\|x\|_1,\vspace{-0.1cm}
\end{equation}
where $(A_i,b_i)\in\mathbb{R}^{n\times d}\times\mathbb{R}^n$ is agent $i$'s local dataset. The entries of $A_i$ and $b_i$ are sampled i.i.d.\ from the standard normal distribution. We set $n=20$ (hence $N=400$), $d=500$, $\lambda=10^{-5}$, and $\gamma_i=0.1+(i-1)\times 0.1$, which yields heterogeneous local smoothness constants and a condition number of $f$ equal to  $\kappa\approx 82.62$.  \begin{figure*}[htbp]\vspace{-0.3cm}
    \centering
    \includegraphics[width=0.32\linewidth]{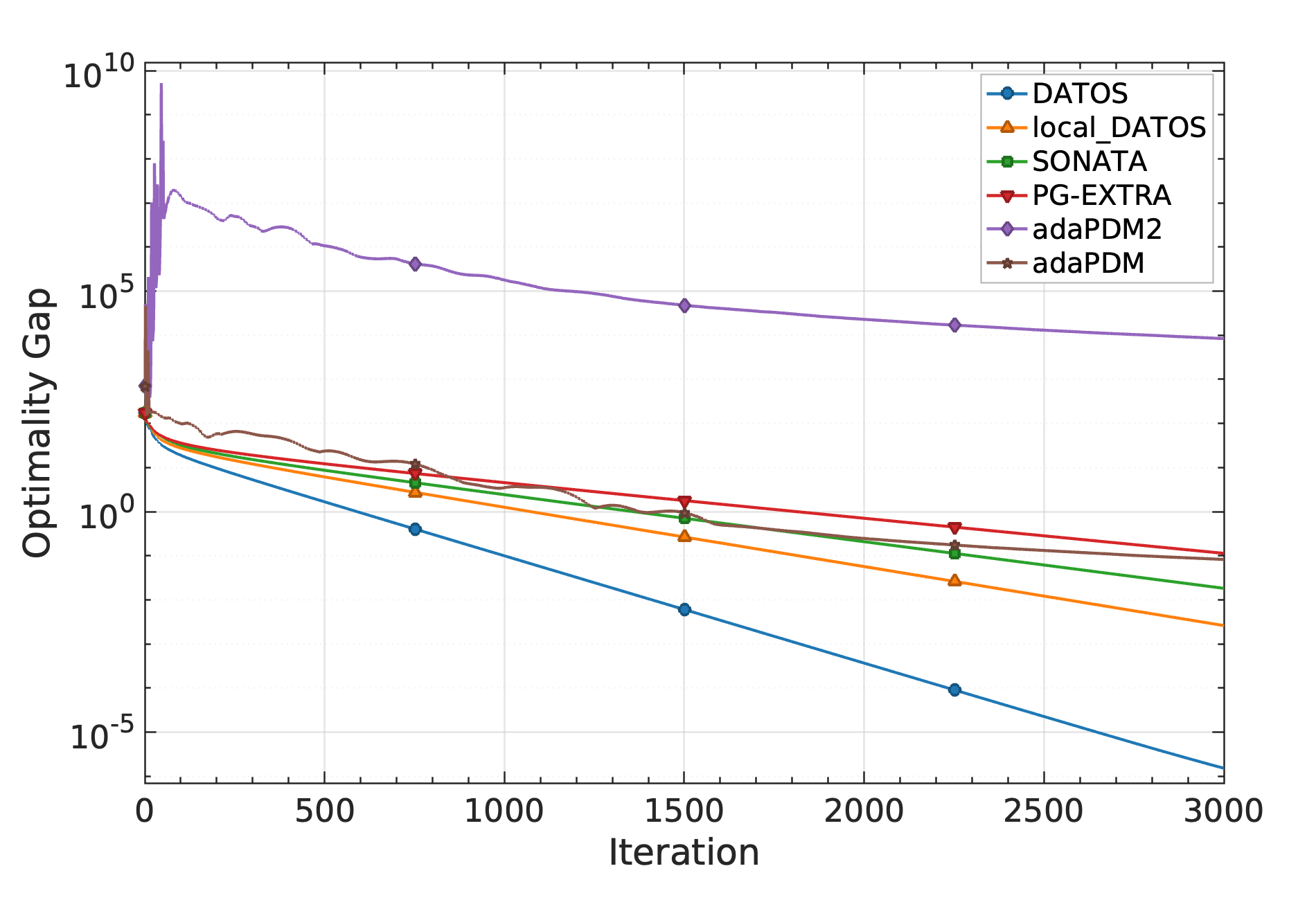}
\includegraphics[width=0.32\linewidth]{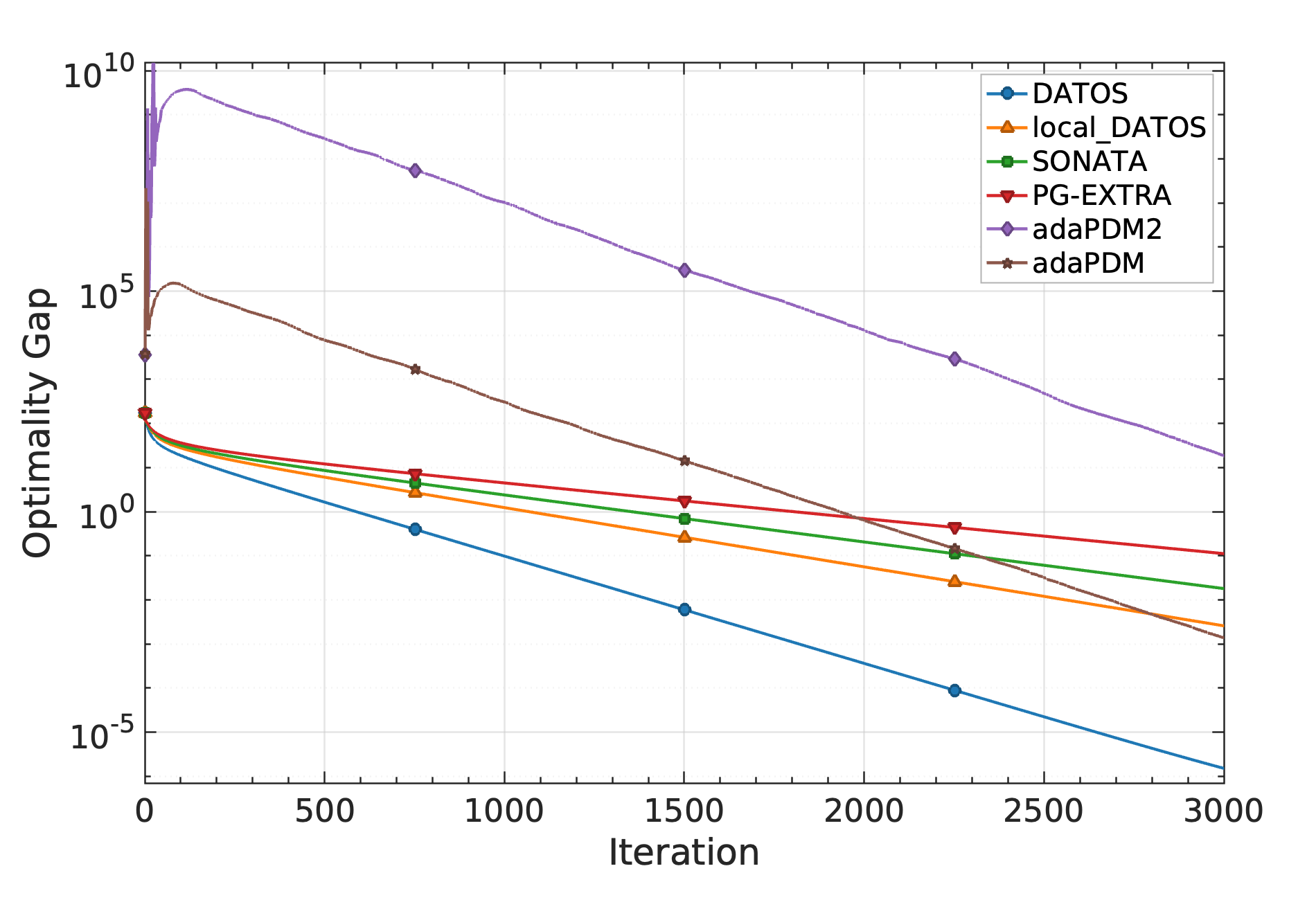}
\includegraphics[width=0.32\linewidth]{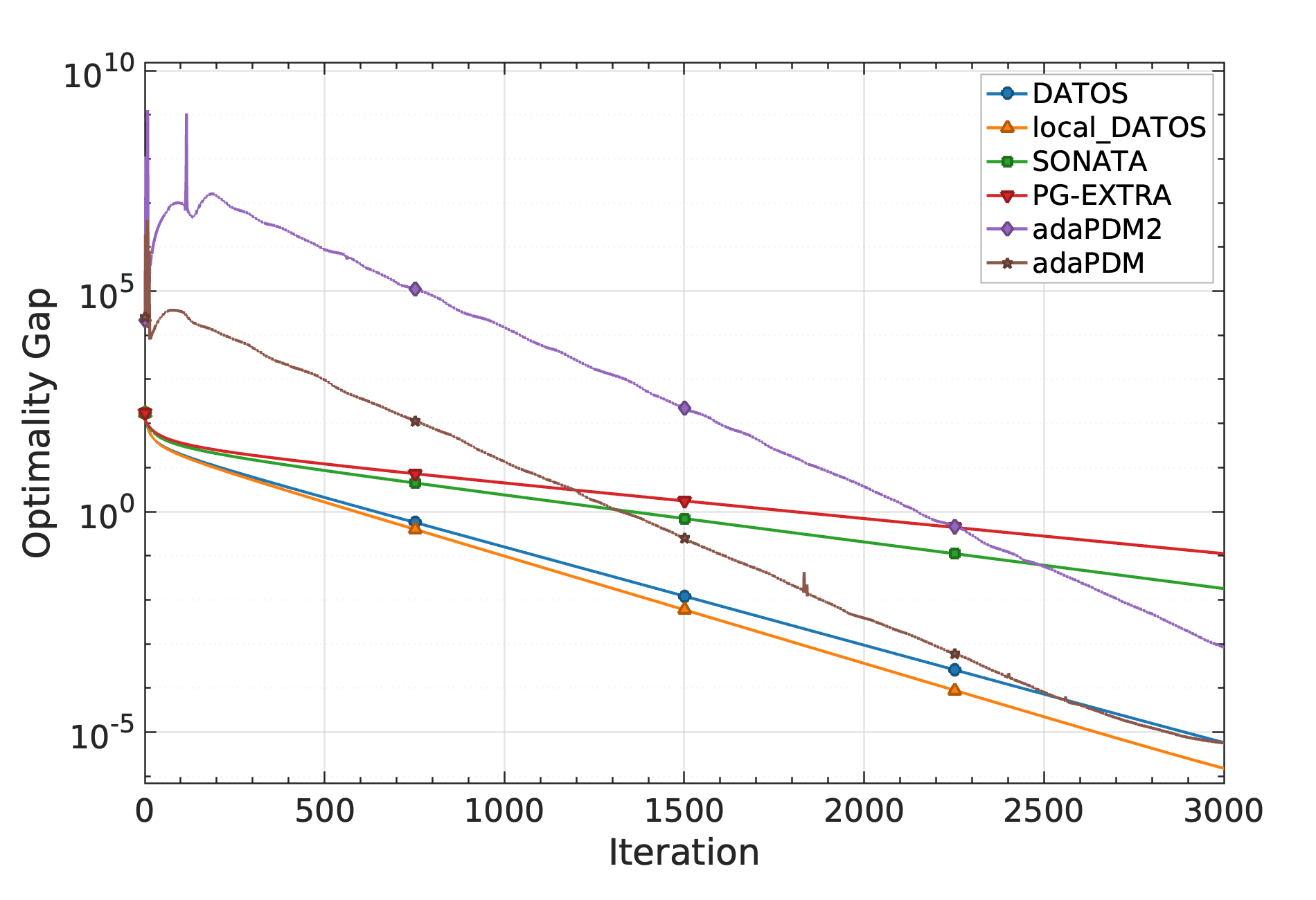}
\caption{Linear regression with elastic net regularization: ${\|\X^k-\X^*\|^2}$  v.s. \# iterations. Comparison of PG-EXTRA, SONATA, adaPDM, adaPDM2, global\_DATOS and local\_DATOS on Erdos-Renyi graphs with   edge-probability:  $p=0.1$  (left);   $p=0.5$ (middle); and  $p=0.9$ (right).}\vspace{-0.2cm} 
\label{fig:general scvx}
\end{figure*}

 Figure~\ref{fig:general scvx} reports the error   $\|\X^k-\X^\star\|^2$ versus the iteration counter. Consistent with the previous experiments, \texttt{global\_DATOS} and \texttt{local\_DATOS} improve substantially over the nonadaptive benchmarks, with the largest gains on sparse graphs. The observed slopes corroborate the linear convergence behavior predicted by our analysis.\vspace{-0.3cm}

\bibliographystyle{siamplain}
\bibliography{references}

@book{ryu2022large,
  title={Large-scale convex optimization: algorithms \& analyses via monotone operators},
  author={Ryu, Ernest K and Yin, Wotao},
  year={2022},
  publisher={Cambridge University Press}
}

@article{kuruzov2024achieving,
  title={Achieving linear convergence with parameter-free algorithms in decentralized optimization},
  author={Kuruzov, Ilya and Scutari, Gesualdo and Gasnikov, Alexander},
  journal={Advances in Neural Information Processing Systems},
  volume={37},
  pages={96011--96044},
  year={2024}
}

@article{polyak1969minimization,
  title={Minimization of unsmooth functionals},
  author={B.T. Polyak},
  journal={USSR Computational Mathematics and Mathematical Physics},
  volume={9},
  number={3},
  pages={14--29},
  year={1969},
  publisher={Elsevier},
  doi={10.1016/0041-5553(69)90085-5}
}

@article{BarzilaiBorwein1988,
  title={Two-point step size gradient methods},
  author={J. Barzilai and J. M. Borwein},
  journal={IMA Journal of Numerical Analysis},
  volume={8},
  number={1},
  pages={141--148},
  year={1988},
  publisher={Oxford University Press}
}

@inproceedings{Malitsky2019AdaptiveGD,
  title={Adaptive gradient descent without descent},
  author={Y. Malitsky and K. Mishchenko},
  booktitle={International Conference on Machine Learning},
  year={2019}
}

@article{li2024problem,
  title={Problem-Parameter-Free Decentralized Nonconvex Stochastic Optimization},
  author={J. Li  and X. Chen and S. Ma and M. Hong},
  journal={arXiv preprint arXiv:2402.08821},
  year={2024}
}

@article{Nazari_Tarzanagh_Michailidis_2022, title={DAdam: A Consensus-Based Distributed Adaptive Gradient Method for Online Optimization}, volume={70},   journal={IEEE Transactions on Signal Processing}, author={P. Nazari and D.A. Tarzanagh and G. Michailidis}, year={2022}, pages={6065–6079} }

@inproceedings{chen2023convergence,
  title={On the convergence of decentralized adaptive gradient methods},
  author={X. Chen and B. Karimi and W. Zhao and P. Li},
  booktitle={Asian Conference on Machine Learning},
  pages={217--232},
  year={2023},
  organization={PMLR}
}

@Article{Nedic_Olshevsky_Rabbat2018,
  author  = {A. Nedi\'{c} and  A. Olshevsky and M. Rabbat},
  title   = {Network topology and communication-computation tradeoffs in decentralized optimization},
  journal = {Proceedings of the IEEE},
  year    = {2018},
  volume  = {106},
  pages   = {953-976},
}

@article{Sayed-book,
  title={Adaptation, learning, and optimization over networks},
  author={A. H. Sayed},
  journal={Foundations and Trends in Machine Learning},
  volume={7},
  pages={311-801},
  year={2014},
  month={January},
  publisher={Now Publishers}
}

@article{shi2015proximal,
  title={A proximal gradient algorithm for decentralized composite optimization},
  author={Shi, W. and Ling, Q. and Wu, G. and Yin, W.},
  journal={IEEE Transactions on Signal Processing},
  volume={63},
  number={22},
  pages={6013--6023},
  year={2015},
  publisher={IEEE}
}

@inproceedings{Orabona19,
  title={On the convergence of stochastic gradient descent with adaptive stepsizes},
  author={Li, Xiaoyu and Orabona, Francesco},
  booktitle={The 22nd international conference on artificial intelligence and statistics},
  pages={983--992},
  year={2019},
  organization={PMLR}
}

@article{Ward20,
  title={AdaGrad stepsizes: Sharp convergence over nonconvex
landscapes},
  author={R. Ward and X. Wu and 
L. Bottou},
  journal={The Journal of Machine Learning Research},
  volume={21},
  pages={1--30},
  year={2020},
  publisher={JMLR. org}
}

@article{sun2019distributed,
	title={Distributed optimization based on gradient-tracking revisited: Enhancing convergence rate via surrogation},
	author={Sun, Y. and   Scutari, G.  and Daneshmand, A. },
	volume = {32},
	issue = {2},
	pages = {354--385},
	journal={SIAM J. on Optimization},
	year={2022}
}

@inproceedings{duchi2011adaptive,
  title={Adaptive subgradient methods for online learning and stochastic optimization},
  author={J. Duchi and E. Hazan and Y. Singer},
  booktitle={Proceedings of the 24th International Conference on Neural Information Processing Systems},
  pages={257--265},
  year={2011}
}

@inproceedings{Kim_Lim_Kim_2016, address={Prague, Czech Republic}, title={Low-Power, Long-Range, High-Data Transmission Using Wi-Fi and LoRa}, booktitle={2016 6th International Conference on IT Convergence and Security (ICITCS)},   author={D.H. Kim and J.Y. Lim and J.D. Kim}, year={2016}, month=sep, pages={1–3} }

@article{Xin_Pu_Nedic_Khan_2020, title={A General Framework for Decentralized Optimization With First-Order Methods}, volume={108},   number={11}, journal={Proceedings of the IEEE}, author={Xin, R. and Pu, S. and Nedic, A. and Khan, U. A.}, year={2020}, month=nov, pages={1869–1889} }

@article{Janssen_BniLam_Aernouts_Berkvens_Weyn_2020, title={LoRa 2.4 GHz Communication Link and Range}, volume={20}, rights={https://creativecommons.org/licenses/by/4.0/}, ISSN={1424-8220}, url={https://www.mdpi.com/1424-8220/20/16/4366}, DOI={10.3390/s20164366}, abstractNote={Recently, Semtech has released a Long Range (LoRa) chipset which operates at the globally available 2.4 GHz frequency band, on top of the existing sub-GHz, km-range offer, enabling hardware manufacturers to design region-independent chipsets. The SX1280 LoRa module promises an ultra-long communication range while withstanding heavy interference in this widely used band. In this paper, we first provide a mathematical description of the physical layer of LoRa in the 2.4 GHz band. Secondly, we investigate the maximum communication range of this technology in three different scenarios. Free space, indoor and urban path loss models are used to simulate the propagation of the 2.4 GHz LoRa modulated signal at different spreading factors and bandwidths. Additionally, we investigate the corresponding data rates. The results show a maximum range of 133 km in free space, 74 m in an indoor office-like environment and 443 m in an outdoor urban context. While a maximum data rate of 253.91 kbit/s can be achieved, the data rate at the longest possible range in every scenario equals 0.595 kbit/s. Due to the configurable bandwidth and lower data rates, LoRa outperforms other technologies in the 2.4 GHz band in terms of communication range. In addition, both communication and localization applications deployed in private LoRa networks can benefit from the increased bandwidth and localization accuracy of this system when compared to public sub-GHz networks.}, number={16}, journal={Sensors}, author={T. Janssen and N. BniLam and M. Aernouts and R. Berkvens and M. Weyn}, year={2020}, month=aug, pages={4366}, language={en} }

@inproceedings{Reddi2018OnTC,
  title={On the Convergence of Adam and Beyond},
  author={S. J. Reddi and S. Kale and S. Kumar},
   booktitle={International Conference on Learning Representations (ICLR)},
  year={2018}
}

@article{Malitsky_Mishchenko_2024, title={Adaptive Proximal Gradient Method for Convex Optimization},    journal={arXiv preprint arXiv:2308.02261},  
author={Y. Malitsky and K. Mishchenko}, year={2024}}

@article{Yura-Pock-LSPrimal-dual-18,
author = {Malitsky, Yura and Pock, Thomas},
title = {A First-Order Primal-Dual Algorithm with Linesearch},
journal = {SIAM Journal on Optimization},
volume = {28},
number = {1},
pages = {411-432},
year = {2018}  
}

@article{Zhou24, title={AdaBB: Adaptive Barzilai-Borwein method for convex optimization},    journal={arXiv preprint arXiv:2401.08024},  
author={D. Zhou  and  S. Ma  and J. Yang}, year={2024}}

@article{Latafat_23b, 
title={Adaptive proximal algorithms for convex optimization under local lipschitz continuity of the gradient}, 
journal={Mathematical Programming, Series A},
year={2024},
author={P. Latafat and A. Themelis and P. Patrinos}}

@article{kingma2014adam,
  title={Adam: A Method for Stochastic Optimization},
  author={D. P. Kingma and J. Ba},
  journal={CoRR},
  year={2014},
  volume={abs/1412.6980}
}

@article{Notarstefano24,
  title={Towards Parameter-free Distributed
Optimization: a Port-Hamiltonian Approach},
  author={R. Aldana-Lopez and A. Macchelli and G. Notarstefano and R. Aragues and  C. Sagues},
  journal={arXiv:2404.13529 },
  year={2024}
}

@article{davis2017three,
  title={A three-operator splitting scheme and its optimization applications},
  author={Davis, Damek and Yin, Wotao},
  journal={Set-valued and variational analysis},
  volume={25},
  pages={829--858},
  year={2017},
  publisher={Springer}
}

@book{bertsekas2016nonlinear,
  author    = {Dimitri P. Bertsekas},
  title     = {Nonlinear Programming},
  publisher = {Athena Scientific},
  edition   = {third},
  year      = {2016},
}

@article{o2020equivalence,
  title={On the equivalence of the primal-dual hybrid gradient method and Douglas--Rachford splitting},
  author={O’Connor, Daniel and Vandenberghe, Lieven},
  journal={Mathematical Programming},
  volume={179},
  number={1},
  pages={85--108},
  year={2020},
  publisher={Springer}
}

@article{chih2libsvm,
  title={" LIBSVM: a library for support vector machines," ACM Transactions on Intelligent Systems and Technology, 2: 27: 1--27: 27, 2011},
  author={Chih-Chung, CHANG},
  journal={http://www. csie. ntu. edu. tw/\~{} cjlin/libsvm},
  volume={2},
  year={1996},
}

@article{guo2023decentralized,
  title={Decentralized inexact proximal gradient method with network-independent stepsizes for convex composite optimization},
  author={Guo, Luyao and Shi, Xinli and Cao, Jinde and Wang, Zihao},
  journal={IEEE Transactions on Signal Processing},
  volume={71},
  pages={786--801},
  year={2023},
  publisher={IEEE}
}

@article{chambolle2011first,
  title={A first-order primal-dual algorithm for convex problems with applications to imaging},
  author={Chambolle, Antonin and Pock, Thomas},
  journal={Journal of mathematical imaging and vision},
  volume={40},
  pages={120--145},
  year={2011},
  publisher={Springer}
}

@article{xu2025accelerated,
  title={An Accelerated Primal Dual Algorithm with Backtracking for Decentralized Constrained Optimization},
  author={Xu, Qiushui and Aybat, Necdet Serhat and G{\"u}rb{\"u}zbalaban, Mert},
  journal={arXiv:2512.07085},
  year={2025}
}

@article{lewis2002active,
  title={Active sets, nonsmoothness, and sensitivity},
  author={Lewis, Adrian S},
  journal={SIAM Journal on Optimization},
  volume={13},
  number={3},
  pages={702--725},
  year={2002},
  publisher={SIAM}
}

@article{liang2017activity,
  title={Activity identification and local linear convergence of forward--backward-type methods},
  author={Liang, Jingwei and Fadili, Jalal and Peyr{\'e}, Gabriel},
  journal={SIAM Journal on Optimization},
  volume={27},
  number={1},
  pages={408--437},
  year={2017},
  publisher={SIAM}
}

@article{hare2004identifying,
  title={Identifying active constraints via partial smoothness and prox-regularity},
  author={Hare, Warren L and Lewis, Adrian S},
  journal={Journal of Convex Analysis},
  volume={11},
  number={2},
  pages={251--266},
  year={2004},
  publisher={HELDERMANN VERLAG LANGER GRABEN 17, 32657 LEMGO, GERMANY}
}

@book{rockafellar2009variational,
  title={Variational analysis},
  author={Rockafellar, R Tyrrell and Wets, Roger J-B},
  volume={317},
  year={2009},
  publisher={Springer Science \& Business Media}
}

@book{boumal2023introduction,
  title={An introduction to optimization on smooth manifolds},
  author={Boumal, Nicolas},
  year={2023},
  publisher={Cambridge University Press}
}

@article{chen2025parameter,
  title={A Parameter-free Decentralized Algorithm for Composite Convex Optimization},
  author={Chen, Xiaokai and Kuruzov, Ilya and Scutari, Gesualdo and Gasnikov, Alexander},
  journal={arXiv preprint arXiv:2508.01466},
  year={2025}
}

@article{jordan6muon,
  title={Muon: An optimizer for hidden layers in neural networks, 2024},
  author={Jordan, Keller and Jin, Yuchen and Boza, Vlado and Jiacheng, You and Cecista, Franz and Newhouse, Laker and Bernstein, Jeremy},
  journal={URL https://kellerjordan. github. io/posts/muon},
  volume={6},
  year={2024}
}
\end{document}